\documentclass[12pt,a4paper]{article}
\usepackage[utf8]{inputenc}
\usepackage[english]{babel}
\usepackage[ruled,section]{algorithm}
\usepackage{makeidx}
\usepackage{amsmath}
\usepackage{amsthm}
\usepackage{amsfonts}
\usepackage{amssymb}
\usepackage{mathrsfs}
\usepackage{graphicx}
\usepackage{fancyhdr}
\usepackage[margin=25mm]{geometry}
\usepackage{simplewick}
\usepackage{latexsym}
\usepackage[all]{xy}
\usepackage{enumerate}
\usepackage[plainpages=false]{hyperref}
\usepackage{hyperref}
\usepackage{color}
\usepackage{esint}
\usepackage{authblk}
\usepackage{perpage}
\usepackage[font=small]{caption}
\makeindex
\hypersetup{pdfpagelabels,hyperindex,colorlinks=true,breaklinks=true,bookmarks=true,linkcolor=black,citecolor=black,
urlcolor=black}
\pagenumbering{arabic}

\newcommand{\real}{\mathbb{R}}
\newcommand{\n}{\mathbb{N}}

\newcommand{\rn}{ {\mathbb{R}^n} }
\renewcommand{\d}{\; \mathrm{d}}
\numberwithin{equation}{section}
\newtheorem{teo}{Theorem}[section]
\newtheorem{lem}[teo]{Lemma}
\newtheorem{rmk}[teo]{Remark}

\newtheorem{defin}[teo]{Definition}

\newtheorem{prop}[teo]{Proposition}

\newtheorem{claim}[teo]{Claim}

\begin{document}

\title{\bf\Large $C^{1,\alpha}$
regularity for fully nonlinear elliptic equations with superlinear growth in the gradient}
\author[1]{Gabrielle Nornberg\footnote{Corresponding author. Email address: gabrielle@icmc.usp.br}\footnote{The work was supported by Capes PROEX/PDSE grant 88881.134627/2016-01.}}
\affil[1]{Pontifícia Universidade Católica do Rio de Janeiro, Brazil}
\date{}

\maketitle

{\small\noindent{\bf{Abstract.}} We extend the Caffarelli-Świech-Winter $C^{1,\alpha}$ regularity estimates to $L^p$-viscosity solutions of fully nonlinear uniformly elliptic equations in nondivergence form with superlinear growth in the gradient and unbounded coefficients. As an application, in addition to the usual $W^{2,p}$ results, we prove the existence of positive eigenvalues for proper operators with nonnegative unbounded weight, in particular for Pucci's operators with unbounded coefficients.

\medskip

{\small\noindent{\bf{Résumé.}} Dans cet article on étend les résultats de régularité $C^{1, \alpha}$ de Caffarelli-Świech-Winter aux solutions de $ L^p$ viscosité des équations complètement non-linéaires, uniformément elliptiques, sous forme non-divergence, avec croissance super-linéaire par rapport au gradient et coefficients non bornés. Dans le cadre d'une application, en plus des résultats habituels $W ^{2, p}$, on prouve l’existence de valeurs propres positives pour les opérateurs propres avec poids non borné non négatif, en particulier pour les opérateurs de Pucci à coefficients non bornés.

\medskip

{\small\noindent{\bf{Keywords.}} {Regularity, Estimates, Superlinear gradient growth, Nondivergence form.}

\medskip

{\small\noindent{\bf{MSC2010.}} {35J15, 35B65, 35J60, 	35P30, 35P15.}

\section{Introduction}\label{Introduction}

The seminal work of Caffarelli \cite{Caf89} in 1989 brought an innovative approach of looking at Schauder type results via iterations from the differential quotients that are perturbations of solutions of the respective autonomous equations.
The techniques in \cite{Caf89}, which contains in particular $C^{1,\alpha}$ estimates for $L^p$-viscosity solutions of uniformly elliptic equations $F(x,D^2u)=f(x)$, allowed Świech \cite{Swiech} to extend them to more general operators $F(x,u,Du,D^2u)$ and later Winter \cite{Winter} to boundary and global bounds. However, everything that is available in the literature, to our knowledge, for $L^p$-viscosity solutions in the fully nonlinear framework, concerns only structures with either linear gradient growth or bounded coefficients, except for some particular cases of extremal equations with small coefficients, see \cite{KSexist2009}.
It is our goal here to obtain $C^{1,\alpha}$ regularity and estimates for general fully nonlinear uniformly elliptic equations, with at most quadratic growth in the gradient and unbounded coefficients.

The study of such quasilinear elliptic equations with quadratic dependence in the gradient had its beginning in the '80s, essentially with the works of Boccardo, Murat and Puel \cite{BMP2}, \cite{BMP1} and became a relevant research topic which still develops. This type of nonlinearity often appears in risk-sensitive stochastic problems, as well as in large deviations, control and game theory, mean-fields problems. Moreover, the class of equations in the form $Lu = g (x,u,D u)$, where $L$ is a second order general operator and $g$ has quadratic growth in the gradient, is invariant under smooth changes of the function $u$ and the variable $x$. Due to this fact, this class is usually referred as having \textit{natural} growth in the gradient.

Rather complete $C^\alpha$ regularity results for fully nonlinear uniformly elliptic equations with up to quadratic growth in the gradient were obtained in \cite{arma2010}, in the most general setting of unbounded coefficients, for $L^p$-viscosity solutions. Then, the question of $C^{1,\alpha}$ regularity for the same class arises naturally. In the present work we show, as can be expected, that $C^{1,\alpha}$ regularity and estimates are valid in this context. These $C^{1,\alpha}$ estimates are instrumental in the recent study of multiplicity for nonproper equations in \cite{multiplicidade}.

We note that Trudinger, independently from \cite{Caf89}, in  \cite{T89} proved $C^{1,\alpha}$ regularity in a less general scenario than Świech and Winter, under a continuity hypothesis for $F$, dealing with $C$-viscosity solutions and approximations under supconvolutions. In that paper, it was stated that a priori estimates for solutions in $C^{1,\alpha}$ of superlinear equations could be derived from the arguments in \cite{T89} and \cite{T88}. However, the question of regularity is more complicated (for a discussion on differences between a priori bounds and regularity results we refer to \cite{SB}).

We also quote some other papers on $C^{1,\alpha}$ regularity, the classical works \cite{KrylovBook}, \cite{LU}, \cite{LU89} for linear equations, \cite{MilSilv} for Neumann boundary conditions, \cite{ST} for asymptotically convex operators, \cite{IS} (local) and \cite{BDC1beta} (global) for degenerate elliptic operators,
\cite{CKS} and \cite{Krylov} for parabolic equations possibly with VMO coefficients.
Furthermore, Wang \cite{W2} has made an important contribution to $C^{1,\alpha}$ regularity for the parabolic equation $u_t+F(x,D^2 u)=g(t,x,Du)$, where $|g(t,x,p)|\leq A\, |p|^2+g(t,x)$, for bounded coefficients, see lemma 1.6 in \cite{W2} (which uses theorem 4.19 in \cite{W1}). Sharp regularity results for general parabolic equations with linear gradient growth can be found in \cite{JE}, and very complete $C^{1,\alpha}$ estimates on the boundary for solutions in the so called $S^*$-class for equations with linear gradient growth and unbounded coefficients in \cite{DEWboundary}.

It is also essential to mention an important series of papers due to Koike and Świech  \cite{KSweakharnack}, \cite{KSmp2004}, \cite{KSmpite2007}, \cite{KSexist2009}, in which they proved ABP and weak Harnack inequalities for $L^p$-viscosity solutions of equations with superlinear growth in the gradient, together with several theorems about existence, uniqueness and $W^{2,p}$ estimates  for solutions of extremal equations involving Pucci's operators with unbounded coefficients, see in particular theorem 3.1 in \cite{KSexist2009}. Many of our arguments depend on the machinery in these works.
\medskip

Next we list our hypotheses.
For $F(x,r,p,X)$ measurable in $x$, we consider the general structure condition
\begin{align} \label{SCmu}
\mathcal{M}_{\lambda, \Lambda}^- (X-Y)-b(x)|p-q|-\mu |p-q|(|p|+|q|)-d(x)\,\omega (|r-s|) \nonumber \\
\leq F(x,r,p,X) - F(x,s,q,Y) \tag*{$(SC)^\mu$} \\
\leq \mathcal{M}_{\lambda, \Lambda}^+ (X-Y)+b(x)|p-q|+\mu |p-q|(|p|+|q|)+d(x)\,\omega (|r-s|) \; \textrm{ for } x\in \Omega\nonumber
\end{align}
where $F(\cdot,0,0,0)\equiv 0$ and $0<\lambda \leq \Lambda$, $b\in L^p_+ (\Omega)$ for some $ p>n$, $d\in L^\infty_+ (\Omega)$, $\mu\geq 0$ and $\omega$ is a modulus of continuity (see  section \ref{Preliminaries}).

In order to measure the oscillation of $F$ in the $x$ entry, we define, as in \cite{Caf89}, \cite{Winter},
\begin{align} \label{def beta}
\beta(x,x_0)=\beta_F (x,x_0):= \sup_{X\in \mathbf{S}^n\setminus \{0\} } \frac{|F(x,0,0,X)-F(x_0,0,0,X)|}{\|X\|} .
\end{align}
Notice that $\beta$ is a bounded function by \ref{SCmu} and consider the usual hypothesis, as in \cite{Caf89}, \cite{Winter}: given $\theta>0$, there exists $r_0=r_0\, (\theta)>0$ such that
\begin{align}\label{Htheta} \tag{$H_{\theta}$}
\;\left( \frac{1}{r^n} \int_{B_r(x_0)\cap\Omega} \beta (x,x_0)^p \d x \right)^{\frac{1}{p}} \leq \theta \, ,\;\textrm{ for all } r\leq r_0 .
\end{align}

The following is our main result. To simplify its statement, here we assume $\omega (r)\leq \omega(1)r$ for all $r\geq 0$.

\begin{teo}
\label{C1,alpha regularity estimates geral}
Assume $F$ satisfies \ref{SCmu}, $f  \in L^p (\Omega)$, where $p>n$, and $\Omega\subset\rn$ is a bounded domain.
Let $u$ be an $L^p$-viscosity solution of
\begin{align}\label{F=f}
F(x,u,Du,D^2 u)=f(x) \quad \textrm{ in }\;\Omega
\end{align}
with $\| u \|_{L^{\infty} (\Omega)} + \|f \|_{L^p (\Omega)} \leq C_0$.
Then, there exists $\alpha\in (0,1)$ and $\theta=\theta (\alpha)$, depending on $n,p,\lambda,\Lambda,\|b\|_{L^p(\Omega)}$, such that if \eqref{Htheta} holds for all $r\leq \min \{ r_0, \mathrm{dist} (x_0,\partial\Omega)\}$, for some $r_0>0$ and for all $x_0 \in \Omega$, this implies that $u\in C^{1,\alpha}_{\mathrm{loc}} (\Omega)$ and for  any subdomain $\Omega^\prime \subset\subset \Omega$,
\begin{align}\label{estim local}
\|u\|_{C^{1,\alpha}(\overline{\Omega^\prime})} \leq C \,\{ \| u \|_{L^{\infty} (\Omega)}  + \|f \|_{L^p (\Omega)} \}
\end{align}
where $C$ depends only on $\,r_0,n,p,\lambda,\Lambda,\alpha,  \mu, \| b \|_{L^p (\Omega)},\omega(1)\| d \|_{L^\infty (\Omega)},\mathrm{diam} (\Omega)$, $\mathrm{dist} (\Omega^\prime,\partial\Omega), C_0$.
\vspace{0.02cm}

If in addition, $\partial\Omega\in C^{1,1}$ and $u\in C(\overline{\Omega})\cap C^{1,\tau} (\partial \Omega )$ is such that and $\| u \|_{L^{\infty} (\Omega)} + \|f \|_{L^p (\Omega)} + \|u\|_{C^{1,\tau} (\partial\Omega)} \leq C_1$, then there exists $\alpha\in (0,\tau)$ and $\theta =\theta (\alpha)$, depending on $n,p,\lambda , \Lambda , \|b\|_{L^p(\Omega)}$, so that if \eqref{Htheta} holds for some $r_0>0$ and for all $x_0 \in \overline{\Omega}$, this implies that $u\in C^{1,\alpha}(\overline{\Omega})$ and satisfies the estimate
\begin{align}\label{estim global}
\|u\|_{C^{1,\alpha}(\overline{\Omega})} \leq C\, \{ \| u \|_{L^{\infty} (\Omega)} + \|f \|_{L^p (\Omega)}  + \|u\|_{C^{1,\tau} (\partial\Omega)} \}
\end{align}
where $C$ depends on $\,r_0,n,p,\lambda,\Lambda,\alpha,\mu$,$\| b \|_{L^p (\Omega)},\omega(1)\| d \|_{L^\infty (\Omega)},\mathrm{diam} (\Omega),C_1$ and on the $C^{1,1}$ diffeomorphisms that describe the boundary.
\vspace{0.01cm}

If $\mu=0$, then the constant $C$ does not depend on $C_0$, $C_1$.
\end{teo}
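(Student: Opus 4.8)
\emph{Proof proposal.}
The plan is to run the compactness-and-iteration scheme of Caffarelli \cite{Caf89}, in the form adapted to lower order terms by Świech \cite{Swiech} and to the boundary by Winter \cite{Winter}, using the ABP and weak Harnack estimates for superlinear $L^p$-viscosity solutions of Koike--Świech \cite{KSweakharnack}, \cite{KSexist2009} together with the $C^\alpha$ regularity of \cite{arma2010} to control the unbounded coefficients and the quadratic gradient term. The starting point is the scaling. If $v(x)=K^{-1}u(x_0+rx)$, then $v$ is an $L^p$-viscosity solution in $B_1$ of $G(x,v,Dv,D^2v)=g$ with $G(x,s,q,Y)=(r^2/K)\,F(x_0+rx,Ks,(K/r)q,(K/r^2)Y)$ and $g(x)=(r^2/K)f(x_0+rx)$; one checks that $G$ satisfies \ref{SCmu} on $B_1$ with the \emph{same} $\lambda,\Lambda$, with $b$ replaced by $r\,b(x_0+r\,\cdot)$ — whose $L^p(B_1)$ norm equals $r^{1-n/p}\|b\|_{L^p(B_r)}\to 0$ since $p>n$ — with the $d\,\omega$ term replaced by one of coefficient $r^2\omega(1)d(x_0+r\,\cdot)$, and, crucially, with $\mu$ replaced by $\mu K$. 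Thus choosing $K\simeq \|u\|_{L^\infty(\Omega)}+\|f\|_{L^p(\Omega)}$ one may assume henceforth that $\|v\|_{L^\infty(B_1)}\le 1$ and that $\|g\|_{L^p(B_1)},\ \|b\|_{L^p(B_1)},\ \mu,\ \omega(1)\|d\|_{L^\infty(B_1)}$ are all as small as desired; this is exactly where the dependence on $C_0$ (and, in the global estimate, on $C_1$) enters when $\mu>0$, and where it disappears when $\mu=0$, since then $\mu K=0$ no matter how we scale.

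Next comes the approximation lemma. Let $\bar\alpha\in(0,1)$ be the interior $C^{1,\bar\alpha}$ exponent for homogeneous uniformly elliptic equations $F_0(D^2h)=0$ with $F_0(0)=0$, and fix any $\alpha<\min\{\bar\alpha,\,1-n/p\}$. I claim there are $\rho=\rho(\alpha)\in(0,1/2)$ and then $\theta,\varepsilon>0$ small such that, whenever $v$ is a normalized solution of a \ref{SCmu}-equation in $B_1$ with $\|v\|_{L^\infty(B_1)}\le 1$, all the quantities above $\le\varepsilon$, and \eqref{Htheta} holds at $0$, there is an affine $\ell$ with $|\ell(0)|+|D\ell|\le C_*$ and $\|v-\ell\|_{L^\infty(B_\rho)}\le\rho^{1+\alpha}$. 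The proof is by contradiction: given sequences $v_k,F_k$ with coefficients, $\mu_k$, $f_k$ and $\theta_k$ tending to $0$ but the conclusion failing, the interior $C^\alpha$ estimate of \cite{arma2010} yields equicontinuity, Arzel\`a--Ascoli extracts $v_k\to v_\infty$ locally uniformly, and the stability of $L^p$-viscosity solutions under vanishing coefficients — half-relaxed limits, as in \cite{KSexist2009}, \cite{arma2010} — identifies $v_\infty$ as a $C$-viscosity solution of an autonomous uniformly elliptic equation $F_\infty(D^2v_\infty)=0$ in a smaller ball; the interior $C^{1,\bar\alpha}$ estimate for the latter supplies its tangent plane $\ell_\infty$ at $0$ with $\|v_\infty-\ell_\infty\|_{L^\infty(B_\rho)}\le C\rho^{1+\bar\alpha}<\tfrac12\rho^{1+\alpha}$ once $\rho$ is small, contradicting the failure for large $k$.

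Then one iterates. Applying the lemma at $x_0\in\Omega'$, rescaling to $B_1$, and reapplying, one builds affine $\ell_k$ with $\|u-\ell_k\|_{L^\infty(B_{\rho^k}(x_0))}\le C\rho^{k(1+\alpha)}$ and $\|\ell_{k+1}-\ell_k\|_{L^\infty(B_{\rho^k}(x_0))}\le C\rho^{k(1+\alpha)}$, so that $|D\ell_{k+1}-D\ell_k|\le C\rho^{k\alpha}$; hence $D\ell_k$ converges, $u$ is differentiable at $x_0$, and one obtains $|u(x)-u(x_0)-Du(x_0)\cdot(x-x_0)|\le C|x-x_0|^{1+\alpha}$, from which $[Du]_{C^\alpha(\overline{\Omega'})}$ follows by comparing the expansions at two points. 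The point to verify is that the rescaled function $\rho^{-k(1+\alpha)}(u-\ell_k)(x_0+\rho^k\,\cdot)$ still solves a \ref{SCmu}-equation with small data in $B_1$: subtracting the affine $\ell_k$ shifts the gradient entry by the bounded vector $D\ell_k$, which keeps \ref{SCmu} valid after enlarging $b$ by a term of size $O(\mu\rho^k)$ and pushing a bounded term of $L^p$-norm $O(\rho^{k(1-\alpha-n/p)})$ into the right-hand side, everything multiplied by favorable powers of $\rho$ because $\alpha<1-n/p$, so the smallness is preserved along the iteration, giving \eqref{estim local}. For the global bound, I would flatten $\partial\Omega$ locally by the $C^{1,1}$ diffeomorphisms — which preserves \ref{SCmu} with a controlled change of $\lambda,\Lambda,b,d$, the second derivative of the diffeomorphism entering the $b$ term as an $L^\infty$ coefficient — subtract the boundary tangent planes using $u\in C^{1,\tau}(\partial\Omega)$ to reduce to homogeneous data on the flat piece, and rerun the approximation--iteration with comparison functions solving $F_\infty(D^2h)=0$ in a half-ball with $h=0$ on the flat part, which enjoy boundary $C^{1,\bar\alpha}$ estimates; a covering argument patches the boundary and interior bounds into \eqref{estim global}.

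The hard part is the quadratic term $\mu|p-q|(|p|+|q|)$: unlike the linear term it is not scale invariant, so one must verify that its effective coefficient stays small under \emph{every} rescaling occurring in the iteration, which it does precisely because of the factor $K$ in $\mu K$ and the shrinking heights $\rho^{k(1+\alpha)}$ — and this forces the dependence of the constants on $C_0,C_1$ when $\mu>0$. The other delicate ingredient is the $L^p$-viscosity stability used in the approximation lemma, now with a \emph{vanishing unbounded} first order coefficient and a superlinear term, for which the Koike--Świech ABP and weak Harnack machinery \cite{KSweakharnack}, \cite{KSexist2009} is indispensable.
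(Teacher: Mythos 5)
Your proposal reproduces the overall Caffarelli--\'Swiech--Winter compactness-and-iteration architecture used in the paper, and your description of the approximation lemma (compactness, stability, identification of the limit with an autonomous $F_\infty(D^2u)=0$) and of the iteration step is essentially correct. However, there is a genuine gap in the treatment of the quadratic term $\mu|p-q|(|p|+|q|)$, which is precisely the novel difficulty this paper handles.

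Under your normalization $v=K^{-1}u(x_0+r\,\cdot)$ with $K\simeq\|u\|_{L^\infty(\Omega)}+\|f\|_{L^p(\Omega)}$, the new quadratic coefficient is $\mu K$, which is \emph{bounded} (of order $\mu C_0$) but not \emph{small}, and it does not shrink as $r\to 0$. Your assertion that "$\mu$\dots\ are all as small as desired" is therefore false at the initial step, and the compactness approximation lemma you propose cannot be invoked: in the limit along your sequence, the quadratic term does not vanish, so the limiting equation would still contain a quadratic gradient term and would not reduce to an autonomous uniformly elliptic $F_\infty(D^2v_\infty)=0$ with its interior $C^{1,\bar\alpha}$ estimate. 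Your later remark that the effective coefficient is controlled "because of the factor $K$ in $\mu K$ and the shrinking heights $\rho^{k(1+\alpha)}$" applies to steps $k\ge 1$ of the iteration but not to $k=0$, which is where the problem appears.

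The missing ingredient is the device the paper attributes to Wang: instead of normalizing by the global quantity $W:=\|u\|_{L^\infty}+\|f\|_{L^p}+\cdots$, one must first fix a small radius $\sigma$ and normalize by the \emph{local oscillation}
$N_\sigma(x_0):=\sigma W+\sup_{x\in B_2}|u(\sigma x+x_0)-u(x_0)|$,
simultaneously subtracting $u(x_0)$ so that $\widetilde u=(u(\sigma\cdot+x_0)-u(x_0))/N$ still satisfies $\|\widetilde u\|_{L^\infty(B_2)}\le 1$. The interior $C^\beta$ estimate of \cite{arma2010} then gives $N\le(1+2^\beta K_1)W_0\,\sigma^\beta$, so the rescaled coefficient $\widetilde\mu=N\mu$ can be driven below the threshold $\delta$ of the approximation lemma by taking $\sigma$ small; this is where the dependence on $C_0$ (through $W_0$ and $\sigma$) genuinely enters, and where it disappears when $\mu=0$. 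Once this initial normalization is done, your iteration scheme, including the transfer of $\mu$ into the first-order coefficient via the cross term with $D\ell_k$, proceeds as in the paper.
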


We also consider, as in \cite{Swiech} and chapter 8 in \cite{CafCab}, a slightly different (smaller) version of $\beta$,
\begin{align} \label{def beta bar}
\bar{\beta}(x,x_0)=\bar{\beta}_F (x,x_0):= \sup_{X\in \mathbf{S}^n} \frac{|F(x,0,0,X)-F(x_0,0,0,X)|}{\|X\|+1} .
\end{align}
Consider  the hypothesis ${(\overline{H})}_\theta$, which is \eqref{Htheta} with $\beta$ replaced by $\bar{\beta}$. This hypothesis is trivially satisfied if $F(x,0,0,X)$ is uniformly continuous in $x$.

\begin{rmk}
If $\omega$ is an arbitrary modulus, we still have regularity and estimates for bounded~$b$, with the same dependence on constants as before, by adding 1 on the right hand side of \eqref{estim local} and \eqref{estim global}. 
In this case, we can replace \eqref{Htheta} by ${(\overline{H})}_\theta$ in Theorem \ref{C1,alpha regularity estimates geral}, see remark \ref{Remark qualquer modulo}.

Of course, explicit zero order unbounded terms that only depend on $u$ and $x$, can always be handled as being part of the right hand side $f(x)$.
\end{rmk}

\begin{rmk}
If $\mu = 0$ we can also obtain Theorem \ref{C1,alpha regularity estimates geral} in terms of ${(\overline{H})}_\theta$, see remark \ref{remark mu=0}.
\end{rmk}

The proof of Theorem \ref{C1,alpha regularity estimates geral} is based on Caffarelli's iteration method. Compared to \cite{Swiech}, \cite{Winter}, we use a simplified rescaling of variable which allows us to carry out the proof, without needing to use a twice differentiability property of viscosity solutions (whose validity is unknown for unbounded coefficients).
We also use ideas of Wang to deal with superlinear terms.

The structure of the paper is as follows. In section \ref{Preliminaries} we recall some known results which are used along the text. In section \ref{proof main th} we give a detailed proof of theorem \ref{C1,alpha regularity estimates geral}, splitting it into local and boundary parts. The final sections \ref{W2,p regularity} and \ref{First eigenvalue} are devoted to applications. Section \ref{W2,p regularity} deals with $W^{2,p}$ regularity -- see theorem \ref{W2,p quad} for the main regularity result; we also present a generalized Nagumo's lemma \ref{Nagumo}.
Section \ref{First eigenvalue} is related to existence of eigenvalues for general operators with a nonnegative unbounded weight, see theorem \ref{exist eig for F c geq 0} (these results play an important role in \cite{multiplicidade}).

Significant contributions on eigenvalues of continuous operators in nondivergence form in bounded domains include the fundamental work \cite{BNV} for linear operators; \cite{QB} for convex fully nonlinear operators; \cite{MJ} for nonlocal operators;  \cite{BD}, \cite{BDru}, \cite{L83}, and the recent \cite{BDPR} for degenerate elliptic operators. Theorem~\ref{exist eig for F c geq 0} is a slight improvement to the general existence theory about nonconvex operators possessing first eigenvalues in \cite{Arms2009} (see also \cite{IY}), since we are not supposing that our nonlinearity is uniformly continuous in $x$.

If, in addition, we have $W^{2,p}$ regularity of solutions, we can extend theorem \ref{exist eig for F c geq 0} even further, allowing an unbounded first order coefficient. Eigenvalues for fully nonlinear operators with such coefficients have been previously studied, to our knowledge,  only for radial operators and eigenfunctions, in \cite{II1} and \cite{II2}.
As a particular case of theorem \ref{exist eig for F c geq 0}, we obtain the existence of positive eigenvalues with a nonnegative unbounded weight for the extremal Pucci's operators with unbounded coefficients.

\begin{prop}\label{corolPucci}
Let $\Omega\subset\rn$ a bounded $C^{1,1}$ domain, $b,\, c\in L^p_+(\Omega)$, $c\gneqq 0$, for $p>n$. Then, there exists $\varphi_1^\pm\in W^{2,p}(\Omega)$ such that, for $\lambda_1^{\pm}$ defined in section \ref{First eigenvalue}, we have $\lambda_1^{\pm}>0$ and
\begin{align}
\left\{
\begin{array}{rclcc}
\mathcal{M}^\pm_{\lambda,\Lambda} (D^2 \varphi_1^\pm )\pm b(x)|D \varphi_1^\pm |+\lambda_1^\pm c(x) \varphi_1^\pm &=& 0 &\mbox{in} & \Omega \\
\varphi_1^\pm &>& 0 &\mbox{in} &\Omega \\
\varphi_1^\pm &=& 0 &\mbox{on} &\partial\Omega.
\end{array}
\right.
\end{align}
\end{prop}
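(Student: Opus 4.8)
The plan is to derive Proposition~\ref{corolPucci} as the instance of the general eigenvalue theorem of Section~\ref{First eigenvalue} (Theorem~\ref{exist eig for F c geq 0}, in the enhanced form that permits an unbounded first-order coefficient once $W^{2,p}$ regularity is available) applied to $F^\pm(x,p,X):=\mathcal{M}^\pm_{\lambda,\Lambda}(X)\pm b(x)|p|$ with weight $c$. These $F^\pm$ satisfy \ref{SCmu} with $\mu=0$, $d\equiv 0$ and the given $b\in L^p_+(\Omega)$, carry no zero-order term, and $F^\pm(x,0,0,X)=\mathcal{M}^\pm_{\lambda,\Lambda}(X)$ does not depend on $x$, so $\beta_{F^\pm}\equiv 0$ and \eqref{Htheta} holds for free; hence Theorem~\ref{C1,alpha regularity estimates geral} (with $\mu=0$) together with the $W^{2,p}$ solvability and estimates of Section~\ref{W2,p regularity} apply to $F^\pm(x,Du,D^2u)+\lambda c(x)u$, where for bounded $u$ one absorbs $\lambda c(x)u\in L^p(\Omega)$ into the right-hand side. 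What remains is to verify the hypotheses of the eigenvalue scheme here; I recall that scheme, with
\begin{align*}
\lambda_1^\pm := \sup\{\lambda\in\real :\ & \exists\,\psi\in W^{2,p}_{\mathrm{loc}}(\Omega)\cap C(\overline{\Omega}),\ \psi>0\ \text{in}\ \overline{\Omega},\\
& \ F^\pm(x,D\psi,D^2\psi)+\lambda\,c(x)\,\psi\le 0\ \text{a.e. in}\ \Omega\}.
\end{align*}

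First I would check $0<\lambda_1^\pm<\infty$. Since $b\in L^p_+(\Omega)$ with $p>n$, $F^\pm$ satisfies the ABP estimate, and the Dirichlet problem $F^\pm(x,D\phi,D^2\phi)=-c(x)$ in $\Omega$, $\phi=0$ on $\partial\Omega$, has a unique solution $\phi\in W^{2,p}(\Omega)$ with $\phi>0$ in $\Omega$ (it is $\ge 0$ by ABP, and $\mathcal{M}^-_{\lambda,\Lambda}(D^2\phi)-b(x)|D\phi|\le F^\pm(x,D\phi,D^2\phi)=-c\le 0$ with $c\gneqq 0$, so the weak Harnack inequality for $L^p$-viscosity supersolutions, valid for $L^p$ first-order terms, rules out interior zeros). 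As $c\ge 0$ and $\phi\in L^\infty(\Omega)$, for every $0<\lambda\le\|\phi\|_{L^\infty(\Omega)}^{-1}$ one has $F^\pm(x,D\phi,D^2\phi)+\lambda c(x)\phi=c(x)(\lambda\phi-1)\le 0$ a.e., so $\phi$ is admissible and $\lambda_1^\pm\ge\|\phi\|_{L^\infty(\Omega)}^{-1}>0$. Finiteness of $\lambda_1^\pm$ is classical (normalize an admissible $\psi$ and test it against a positive principal function of an extremal minorant operator with weight $\lambda c$ on a ball where $\int c>0$) and I would omit it.

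Next, the eigenfunction. Pick $\lambda_k\uparrow\lambda_1^\pm$ and let $u_k\in W^{2,p}(\Omega)$ solve $F^\pm(x,Du_k,D^2u_k)+\lambda_k c(x)u_k=-c(x)$ in $\Omega$, $u_k=0$ on $\partial\Omega$, $u_k>0$; this is possible because $\lambda_k<\lambda_1^\pm$ yields the comparison principle for $F^\pm+\lambda_k c(\cdot)\,\cdot$. If $\sup_k\|u_k\|_{L^\infty(\Omega)}<\infty$, the $C^{1,\alpha}(\overline{\Omega})$ and $W^{2,p}(\Omega)$ bounds of Sections~\ref{proof main th} and~\ref{W2,p regularity} make $(u_k)$ precompact in $C^1(\overline{\Omega})$ and bounded in $W^{2,p}(\Omega)$, and a subsequence converges to a $W^{2,p}$ solution $u>0$ of $F^\pm(x,Du,D^2u)+\lambda_1^\pm c(x)u=-c(x)$ with zero boundary data; then $u/\|u\|_{L^\infty(\Omega)}$ is a strict positive supersolution for some $\lambda>\lambda_1^\pm$, contradicting the definition of $\lambda_1^\pm$. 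Hence $M_k:=\|u_k\|_{L^\infty(\Omega)}\to\infty$, and $v_k:=u_k/M_k$ satisfies $F^\pm(x,Dv_k,D^2v_k)+\lambda_k c(x)v_k=-c(x)/M_k$, $\|v_k\|_{L^\infty(\Omega)}=1$, $v_k\ge 0$, $v_k=0$ on $\partial\Omega$. By the same uniform bounds, along a subsequence $v_k\to\varphi_1^\pm$ in $C^1(\overline{\Omega})$ and weakly in $W^{2,p}(\Omega)$, with $\|\varphi_1^\pm\|_{L^\infty(\Omega)}=1$, $\varphi_1^\pm\ge 0$, $\varphi_1^\pm|_{\partial\Omega}=0$; stability of $L^p$-viscosity solutions under this convergence (with $-c(\cdot)/M_k\to 0$ in $L^p$) and $W^{2,p}$ regularity give that $\varphi_1^\pm\in W^{2,p}(\Omega)$ solves $\mathcal{M}^\pm_{\lambda,\Lambda}(D^2\varphi_1^\pm)\pm b(x)|D\varphi_1^\pm|+\lambda_1^\pm c(x)\varphi_1^\pm=0$ in $\Omega$ with $\varphi_1^\pm=0$ on $\partial\Omega$. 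Since $\|\varphi_1^\pm\|_{L^\infty(\Omega)}=1$, $\varphi_1^\pm\not\equiv 0$; and as $\mathcal{M}^\pm_{\lambda,\Lambda}(D^2\varphi_1^\pm)\pm b|D\varphi_1^\pm|=-\lambda_1^\pm c\varphi_1^\pm\le 0$, the function $\varphi_1^\pm$ is a nonnegative supersolution of $\mathcal{M}^-_{\lambda,\Lambda}(D^2w)-b(x)|Dw|=0$, so the weak Harnack inequality upgrades $\varphi_1^\pm\ge 0$ to $\varphi_1^\pm>0$ in $\Omega$, finishing the proof.

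The main obstacle is the non-degeneracy step --- ruling out $\sup_k\|u_k\|_{L^\infty(\Omega)}<\infty$, which is what guarantees the normalized limit $\varphi_1^\pm$ is nontrivial --- where the sharp supersolution characterization of $\lambda_1^\pm$ and the comparison principle for $\lambda<\lambda_1^\pm$ are used decisively. A close second is making sure that all the qualitative tools invoked above (ABP, solvability of the Dirichlet problem, comparison and maximum principles, weak Harnack and strong maximum principle, stability and compactness of solutions) are genuinely available for $F^\pm+\lambda c(\cdot)\,\cdot$ with \emph{unbounded} first-order coefficient $b$ and \emph{unbounded} weight $c$: this is precisely what the $C^{1,\alpha}$ estimates of Theorem~\ref{C1,alpha regularity estimates geral}, the $W^{2,p}$ theory of Section~\ref{W2,p regularity}, and the weak Harnack machinery for $L^p$-viscosity solutions with unbounded data deliver, and it is what distinguishes Proposition~\ref{corolPucci} from its bounded-coefficient predecessors.
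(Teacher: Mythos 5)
Your declared plan in the opening sentence --- apply Theorem~\ref{exist eig for F c geq 0} in its $W^{2,p}$-enhanced form to $F^\pm(x,p,X):=\mathcal{M}^\pm_{\lambda,\Lambda}(X)\pm b(x)|p|$ --- is exactly the paper's proof and, as you note in that paragraph, it is essentially complete: $F^\pm$ satisfies $(H)$ (structure condition $(SC)$ with $\mu=d\equiv 0$; $(\overline{H})_\theta$ trivially since $F^\pm(x,0,0,X)=\mathcal{M}^\pm(X)$ is $x$-independent, so $\bar\beta_{F^\pm}\equiv 0$; solvability $(S)$ and $W^{2,p}$ regularity of solutions by Proposition~\ref{W2,p solv}; positive $1$-homogeneity by inspection). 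The last clause of Theorem~\ref{exist eig for F c geq 0} then delivers $\alpha_1^\pm=\lambda_1^\pm>0$, signed eigenfunctions in $C^{1,\alpha}(\overline{\Omega})$, and the extension to $b\in L^p_+(\Omega)$; $W^{2,p}$ membership of the eigenfunctions follows by reading the equation as a Pucci Dirichlet problem with $L^p$ right-hand side and invoking Proposition~\ref{W2,p solv} once more. Writing out these checks would be a correct and short proof in the spirit of the paper.

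However, what you actually execute after that first paragraph is \emph{not} an application of Theorem~\ref{exist eig for F c geq 0}; it is an independent Berestycki--Nirenberg--Varadhan style construction: take $\lambda_k\uparrow\lambda_1^\pm$, solve the non-proper problems
\begin{align*}
F^\pm[u_k]+\lambda_k\,c(x)\,u_k=-c(x)\ \text{in}\ \Omega,\quad u_k=0\ \text{on}\ \partial\Omega,\quad u_k>0,
\end{align*}
normalize, and pass to the limit. This is a genuinely different route from the paper --- it bypasses Krein--Rutman, the positive-weight approximation $c_\varepsilon=c+\varepsilon$, and the blow-up Lemma~\ref{limit unbounded b cont c} --- and if it worked it would be more self-contained. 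But there is a concrete gap: the solvability of the displayed Dirichlet problem for $\lambda_k\in(0,\lambda_1^\pm)$ with a positive solution is nowhere available in the paper. Proposition~\ref{W2,p solv} only treats \emph{proper} Pucci operators, whereas $F^\pm[\,\cdot\,]+\lambda_k c(x)\,\cdot$ has a nonnegative zero-order coefficient of the wrong sign. Your justification (``$\lambda_k<\lambda_1^\pm$ yields the comparison principle'') is itself a theorem that must be proved in this unbounded-coefficient setting, and even granting it one still needs an existence mechanism (a continuation or degree argument using an a priori bound below $\lambda_1^\pm$), none of which is carried out. This resolvent-existence step below the principal eigenvalue is precisely the hard ingredient that the Krein--Rutman route avoids, since the latter only ever calls the proper solvability of Proposition~\ref{W2,p solv}. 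You also explicitly omit finiteness of $\lambda_1^\pm$, which the paper establishes separately (Lemma~\ref{boundedness eig QB} / Lemma~\ref{limit final}). Filling these two holes would require essentially re-deriving a substantial portion of Section~\ref{First eigenvalue}; the cleaner and safer proof is the one you announced in your opening sentence and then departed from.
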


\textit{Acknowledgments.} I would like to thank my Ph.D. advisor, Professor Boyan Sirakov, for years of patient guidance, many valuable discussions and helpful suggestions.

I am also deeply indebted to Prof.\ Diego Moreira, whose valuable remarks and suggestions led to a substantial improvement of the paper, in particular by clarifying some arguments.

\section{Preliminaries}\label{Preliminaries}

We start detailing the hypothesis \ref{SCmu}. Notice that the condition over the highest order term $X$, for $p=q$ and $ r=s$, implies that $F$ is a uniformly elliptic operator.
In \ref{SCmu},
$$
\mathcal{M}^+_{\lambda,\Lambda}(X):=\sup_{\lambda I\leq A\leq \Lambda I} \mathrm{tr} (AX)\;\;\textrm{ and} \quad \mathcal{M}^-_{\lambda,\Lambda}(X):=\inf_{\lambda I\leq A\leq \Lambda I} \mathrm{tr} (AX)
$$
are the Pucci's extremal operators.  See, for example, \cite{CafCab} and \cite{QB} for their properties.

By \textit{modulus} we mean a function $\omega :[0,+\infty] \rightarrow [0,+\infty]$ continuous at $0$ with $\omega (0)=0$. We may consider $\omega$ increasing and continuous, up to replacing it by a larger function. We can also suppose $\omega$ subadditive, from where $\omega (k)\leq (k+1)\,\omega (1)$ for all $k\geq 0$.  
Moreover, we may assume that $d$ is bounded everywhere, up to defining it in a zero measure set. 
We stress that our results are not restricted to bounded zero order terms, since unbounded ones which only depend on $x$ and $u$ can always be treated as being part of the right hand side.

Next we recall the definition of $L^p$-viscosity solution.

\begin{defin}\label{def Lp-viscosity sol}
Let $f\in L^p_{\textrm{loc}}(\Omega)$.
We say that $u\in C(\Omega)$ is an $L^p$-viscosity subsolution $($respectively, supersolution$)$ of \eqref{F=f} if whenever $\phi\in  W^{2,p}_{\mathrm{loc}}(\Omega)$, $\varepsilon>0$ and $\mathcal{O}\subset\Omega$ open are such that
\begin{align*}
F(x,u(x),D\phi(x),D^2\phi (x))-f(x)  \leq -\varepsilon\;\;
( F(x,u(x),D\phi(x),D^2\phi (x))-f(x)  \geq \varepsilon )
\end{align*}
for a.e. $x\in\mathcal{O}$, then $u-\phi$ cannot have a local maximum $($minimum$)$ in $\mathcal{O}$.
\end{defin}

We can think about $L^p$-viscosity solutions for any $p>\frac{n}{2}$, since this restriction makes all test functions $\phi\in W^{2,p}_{\mathrm{loc}}(\Omega)$ continuous and having a second order Taylor expansion \cite{CCKS}.
We are going to deal mostly with the case $p>n$. In particular, for $\Omega$ bounded with $\partial\Omega\in C^{1,1}$, this implies that the continuous injection $W^{2,p}(\Omega)\subset C^1(\overline{\Omega})$ is compact, for all $n\geq 1$.
Further, when $p>n$ and $F$ possesses the quadratic structure \ref{SCmu}, the maximum or minimum in the definition \ref{def Lp-viscosity sol} can taken to be strict (see for example proposition 1 in \cite{KoikePerronRevisited}).

If $F$ and $f$ are continuous in $x$, we can use the more usual notion of \textit{$C$-viscosity} sub and supersolutions, as in \cite{user}.
Both definitions are equivalent when, moreover, $F$ satisfies \ref{SCmu} for bounded $b$, with $\mu,d\equiv 0$ and $p\geq n$, by proposition 2.9 in \cite{CCKS}; we will be using them interchangeably, in this case, throughout the text.

A \textit{strong} sub or subsolution belongs to $W^{2,p}_{\mathrm{loc}}(\Omega)$ and satisfies the inequality at almost every point. Such notions are related, up to quadratic growth, as shows the next proposition.
\begin{prop}\label{Lpiffstrong.quad}
Assume $F$ satisfies \ref{SCmu} with $b\in L^q_+(\Omega)$, $q\geq p\geq n$, $q>n$ and $f\in L^p(\Omega)$.
Then, $u\in W^{2,p}_{\mathrm{loc}}(\Omega)$ is a strong subsolution $($supersolution$)$ of $F=f$ in $\Omega$ if and only if it is an $L^p$-viscosity subsolution $($supersolution$)$ of it.
\end{prop}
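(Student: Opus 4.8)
The plan is to prove the two implications separately, the ``strong $\Rightarrow$ $L^p$-viscosity'' direction being the elementary one and the converse being the substantial part. For the easy direction, suppose $u\in W^{2,p}_{\mathrm{loc}}(\Omega)$ satisfies $F(x,u,Du,D^2u)\geq f(x)$ a.e. (the subsolution case). Let $\phi\in W^{2,p}_{\mathrm{loc}}(\Omega)$, $\varepsilon>0$ and $\mathcal O\subset\Omega$ open be such that $F(x,u(x),D\phi(x),D^2\phi(x))-f(x)\leq-\varepsilon$ a.e. in $\mathcal O$, and suppose for contradiction that $u-\phi$ has a local maximum at some $x_0\in\mathcal O$. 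Since $p>n/2$, both $u$ and $\phi$ have pointwise second-order Taylor expansions a.e.; at a.e.\ point one has $D(u-\phi)=0$ and $D^2(u-\phi)\leq 0$ in the a.e.\ sense in a neighborhood of the max, and one uses the Aleksandrov--Bakelman--Pucci machinery (or rather the fact, recorded in \cite{CCKS}, that at a max of $u-\phi$ one can find points arbitrarily close where $Du=D\phi$ and $D^2u\leq D^2\phi$ a.e.) to compare: at such a point, using \ref{SCmu} with the monotonicity of $\mathcal M^\pm$ in $X$ and $p=q$, $r=s$,
\[
f(x)\leq F(x,u,Du,D^2u)\leq F(x,u,D\phi,D^2\phi)\leq f(x)-\varepsilon,
\]
a contradiction. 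Here the subadditivity/boundedness reductions on $\omega$, $d$ from the Preliminaries are not needed since $r=s$ and $p=q$.

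For the converse, ``$L^p$-viscosity $\Rightarrow$ strong,'' I would argue locally on a ball $B=B_{2\rho}(x_0)\Subset\Omega$ and use a solvability-plus-comparison scheme. First, since $b\in L^q$ with $q>n$ and $F$ has the quadratic structure \ref{SCmu}, and $u\in C(\Omega)$ with $\|u\|_{L^\infty(B)}$ controlled, the relevant Pucci-type operator with the zero and first order terms frozen using $u$ still obeys the hypotheses under which $W^{2,p}$ solvability of the Dirichlet problem holds (this is exactly the framework of Koike--Świech, e.g.\ theorem 3.1 in \cite{KSexist2009}, together with the $W^{2,p}$ theory invoked later in the paper). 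So consider the auxiliary equation
\[
F(x,u(x),Dv,D^2v)=f(x)\quad\text{in }B,\qquad v=u\ \text{on }\partial B,
\]
which, as a fully nonlinear uniformly elliptic equation with $x$-measurable ingredients, $L^q$ first-order coefficient and bounded zero-order part (now a function of $x$ only, since $u$ is frozen), admits a strong solution $v\in W^{2,p}_{\mathrm{loc}}(B)\cap C(\overline B)$. By the easy direction just proved, $v$ is an $L^p$-viscosity solution of the same frozen equation. Now I would like to conclude $u\equiv v$ by a comparison principle: $u$ is an $L^p$-viscosity subsolution of the frozen equation as well (replacing the genuine zero-order dependence on $u$ by its value along $u$ changes nothing in the definition), and $v$ a supersolution, with $u\leq v$ on $\partial B$; comparison for $L^p$-viscosity sub/supersolutions of equations with quadratic gradient growth and unbounded coefficients --- this is precisely the kind of statement established in \cite{KSexist2009}, \cite{arma2010} --- gives $u\leq v$, and the reverse inequality symmetrically, hence $u=v\in W^{2,p}_{\mathrm{loc}}(B)$.

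The main obstacle is the comparison step: comparison principles for $L^p$-viscosity solutions with \emph{superlinear} (quadratic) growth in the gradient and \emph{unbounded} coefficients are delicate and require, typically, one of the two functions being compared to be a strong (i.e.\ $W^{2,p}_{\mathrm{loc}}$) subsolution/supersolution and some smallness or sign structure, or an exponential change of unknown $w=\tfrac{1}{\mu}(1-e^{-\mu u})$ to linearize the quadratic term. I expect the clean route is: apply such a change of variable to reduce $\mu$-quadratic growth to linear growth (at the cost of modifying $f$ and the zero-order term, which stays admissible because $u$ is bounded), where the required comparison between an $L^p$-viscosity solution and a strong solution is available from Koike--Świech (cf.\ also \cite{CCKS}). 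One then transfers the conclusion back. A secondary technical point is the matching of exponents: the statement allows $q\geq p\geq n$ with $q>n$ but only claims $u\in W^{2,p}_{\mathrm{loc}}$; the $W^{2,p}$ (not $W^{2,q}$) conclusion is exactly what the Fabes--Stroock / Escauriaza-type estimate and the Koike--Świech $W^{2,p}$ theory deliver when $f\in L^p$ and $b\in L^q$ with $q>n$, so no loss occurs. Finally, a covering argument in $\Omega$ upgrades the local conclusion on each $B$ to $u\in W^{2,p}_{\mathrm{loc}}(\Omega)$, and the a.e.\ inequality $F(x,u,Du,D^2u)\geq f(x)$ (resp.\ $\leq$) holds because $u$ coincides with the strong solution $v$ of the frozen equation on each ball.
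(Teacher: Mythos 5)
The paper does not give its own proof here: it simply cites theorem 3.1 and proposition 9.1 of Koike--\'{S}wi\k{e}ch \cite{KSweakharnack}, where the equivalence of strong and $L^p$-viscosity sub/supersolutions within $W^{2,p}_{\mathrm{loc}}$ is established (even for more general exponents and $\mu$). Your easy direction (strong $\Rightarrow$ $L^p$-viscosity) is in the right spirit and close to the standard argument. The converse direction, however, misreads the statement and then relies on a scheme that cannot work for sub- and supersolutions.

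The key point you overlooked is that the hypothesis $u\in W^{2,p}_{\mathrm{loc}}(\Omega)$ is part of the \emph{ambient assumption} of the proposition, not the conclusion: the claim is only that for a function already known to be $W^{2,p}_{\mathrm{loc}}$, being an $L^p$-viscosity subsolution is equivalent to being a strong subsolution. What must be proven in the hard direction is the a.e.\ pointwise inequality $F(x,u,Du,D^2u)\geq f(x)$, not any regularity of $u$. Your ``solvability-plus-comparison'' approach is designed to show that $u$ coincides with a $W^{2,p}$ strong solution $v$ of a frozen Dirichlet problem; but this genuinely fails for sub/supersolutions, because one-sided comparison gives only $u\leq v$ (sub case), which tells you nothing about $u$ itself being a strong subsolution. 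Your scheme could at best identify $u$ with $v$ when $u$ is a viscosity \emph{solution} of the frozen equation, i.e.\ both sub and super, and hence would not cover the one-sided statement. Moreover, your invocation of $W^{2,p}$ solvability of the Dirichlet problem for the frozen operator $F(x,u(x),\cdot,\cdot)$ tacitly requires convexity or concavity of $F$ in $X$, which \ref{SCmu} does not assume; the proposition holds for arbitrary measurable $F$ satisfying \ref{SCmu}. The correct proof (as in Koike--\'{S}wi\k{e}ch, or lemma 2.6 of \cite{CCKS} for the linear-growth prototype) argues directly: assume $F(x,u,Du,D^2u)<f(x)-\varepsilon$ on a set of positive measure, pick a Lebesgue point that is also a point of second-order differentiability of $u$ (available a.e.\ since $u\in W^{2,p}_{\mathrm{loc}}$, $p>n/2$), and construct a quadratic test function around it to contradict the $L^p$-viscosity subsolution property; no auxiliary Dirichlet problem or comparison principle is needed.
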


See theorem 3.1 and proposition 9.1 in \cite{KSweakharnack} for a proof, even for more general conditions on $\mu$ and the exponents $p,q$.
A \textit{solution} is always both sub and supersolution of the equation.

The next proposition follows from theorem 4 in \cite{arma2010} in the case $p=n$.
For a version with more general exponents and coefficients, we refer to proposition 9.4 in \cite{KSweakharnack}.

\begin{prop}  \label{Lpquad} {$($Stability$)$}
Let $F$, $F_k$ operators satisfying \ref{SCmu},\,$b\in L^q_+(\Omega), q\geq p\geq n, q>n$, $f, \, f_k\in L^p(\Omega)$. Let $u_k\in C(\Omega)$ be an $L^p$-viscosity subsolution $($supersolution$)$ of $F_k=f_k$ i.e.
$$
F_k(x,u_k,Du_k,D^2u_k)\geq f_k(x) \;\;\;\textrm{in} \;\;\;\Omega\quad (\leq)\quad\textrm{ for all }\; k\in \n .
$$
Suppose  $u_k\rightarrow u$ in $L_{\mathrm{loc}}^\infty  (\Omega)$ as $k\rightarrow \infty$ and for each ball $B\subset\subset \Omega$ and $\varphi\in W^{2,p}(B)$, setting
\begin{align*}
g_k(x):=F_k(x,u_k,D\varphi,D^2\varphi)-f_k(x) ; \;\;
g(x):=F(x,u,D\varphi,D^2\varphi)-f(x),
\end{align*}
we have $\| (g_k-g)^+\|_{L^p(B)}$ $(\| (g_k-g)^-\|_{L^p(B)}) \rightarrow 0$ as $k\rightarrow \infty$. Then $u$ is an $L^p$-viscosity subsolution $($supersolution$)$ of $F=f$ i.e.
$F(x,u,Du,D^2u)\geq f(x)$  \;$ (\leq)\;$ in $\Omega$.
\vspace{0.01cm}

If $F$ and $f$ are continuous in $x$, then it is enough that the above holds for every $\varphi\in C^2 (B)$, in which case $u$ is a $C$-viscosity subsolution $($supersolution$)$ of $F=f$ in $\Omega$.
\end{prop}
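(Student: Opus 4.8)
The plan is to run the perturbed test-function argument of \cite{CCKS}, adapted so that the quadratic gradient term is absorbed into the correction. I argue for subsolutions; the supersolution case is symmetric (use $\phi-v_k$ and the $\mathcal{M}^-$ side of \ref{SCmu}). Suppose $u$ were \emph{not} an $L^p$-viscosity subsolution of $F=f$: by Definition \ref{def Lp-viscosity sol} there are $\phi\in W^{2,p}_{\mathrm{loc}}(\Omega)$, $\varepsilon>0$ and an open set $\mathcal O\subset\Omega$ with $F(x,u,D\phi,D^2\phi)-f\le-\varepsilon$ a.e.\ in $\mathcal O$, while $u-\phi$ has a local maximum in $\mathcal O$. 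Since $p>\tfrac{n}{2}$ and $F$ has the quadratic structure \ref{SCmu}, as recalled after Definition \ref{def Lp-viscosity sol} I may assume this maximum is \emph{strict} on a closed ball $\overline B=\overline{B_\rho(x_0)}\subset\mathcal O$, with $(u-\phi)(x_0)-\max_{\partial B}(u-\phi)=:2\delta>0$; also $L:=\|D\phi\|_{L^\infty(B)}<\infty$ since $\phi\in C^1$ there. Set $g_k:=F_k(x,u_k,D\phi,D^2\phi)-f_k$, $g:=F(x,u,D\phi,D^2\phi)-f$, $h_k:=(g_k-g)^+\ge0$, so $\|h_k\|_{L^p(B)}\to0$, $g_k\le-\varepsilon+h_k$ a.e.\ in $B$, and $u_k\to u$ uniformly on $\overline B$.

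The heart of the matter is to build a $C^1$-small corrector compensating $h_k$ pointwise. I fix $C':=\mu(2L+1)$ and take $v_k\in W^{2,p}(B)\cap W^{1,p}_0(B)$ solving
\[
\mathcal{M}^+_{\lambda,\Lambda}(D^2v_k)+\bigl(b(x)+C'\bigr)\,|Dv_k|=-h_k \quad\text{in }B,\qquad v_k=0 \ \text{ on }\partial B ,
\]
which is solvable with $L^q$ (hence $L^p$) drift by the Koike-Świech theory used in this paper, cf.\ \cite{KSexist2009}. Then $\mathcal{M}^+_{\lambda,\Lambda}(D^2v_k)\le 0$, so the minimum principle gives $v_k\ge0$; ABP (\cite{KSweakharnack}) gives $\|v_k\|_{L^\infty(B)}\le C\|h_k\|_{L^p(B)}$; and a global $W^{2,p}$ estimate together with $W^{2,p}\hookrightarrow C^1$ (since $p>n$) give $\|v_k\|_{C^1(\overline B)}\le C\|h_k\|_{L^p(B)}\to0$. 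Hence for $k$ large $\|v_k\|_{C^0(\overline B)}<\delta/2$ and $\|Dv_k\|_{L^\infty(B)}\le1$, the latter bound \emph{a posteriori} justifying the choice of $C'$. With $\phi_k:=\phi+v_k$, comparing $F_k$ at $(D\phi_k,D^2\phi_k)$ with $F_k$ at $(D\phi,D^2\phi)$ through \ref{SCmu} (the zero-order slot equals $u_k$ in both, and $\mu|Dv_k|(2L+|Dv_k|)\le C'|Dv_k|$) one obtains, a.e.\ in $B$,
\[
F_k(x,u_k,D\phi_k,D^2\phi_k)-f_k \;\le\; g_k+\mathcal{M}^+_{\lambda,\Lambda}(D^2v_k)+\bigl(b(x)+C'\bigr)|Dv_k| \;=\; g_k-h_k \;\le\; -\varepsilon .
\]

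To conclude, for $k$ large the smallness of $\|u_k-u\|_{C^0(\overline B)}$ and $\|v_k\|_{C^0(\overline B)}$ forces $u_k-\phi_k$ to be $>(u-\phi)(x_0)-\delta$ at $x_0$ but $<(u-\phi)(x_0)-\delta$ on $\partial B$, so $u_k-\phi_k$ has a maximum at an interior point of $B$ (an admissible test point after a routine localization and extension of $\phi_k$). Together with the last display this contradicts Definition \ref{def Lp-viscosity sol} for the $L^p$-viscosity subsolution $u_k$; hence $u$ is an $L^p$-viscosity subsolution of $F=f$. When $F$ and $f$ are continuous in $x$, the same scheme runs with $\phi,\phi_k\in C^2$: either mollify $b$ and $h_k$ (relaxing $-\varepsilon$ to $-\varepsilon/2$) so that the corrector is $C^2$, or invoke the equivalence of $C$- and $L^p$-viscosity solutions in this class, obtaining a $C$-viscosity subsolution.

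The step I expect to be delicate is the corrector: reconciling its $C^1$-smallness with the exact pointwise compensation of $h_k$, in the presence of the \emph{unbounded} drift $b\in L^q$ and the quadratic term $\mu|p|^2$. The former rests on the ABP and global $W^{2,p}$ estimates for extremal equations with $L^p$ coefficients (\cite{KSweakharnack}, \cite{Winter}); the latter on the freezing above, legitimate precisely because $\phi\in C^1$ bounds $|D\phi|$ and $v_k$ is already known to be $C^1$-small.
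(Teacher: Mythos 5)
Your proposal is correct and is a genuine, self‑contained proof; the paper itself gives none, referring instead to theorem~4 in \cite{arma2010} (for $p=n$) and proposition~9.4 in \cite{KSweakharnack} (for general exponents). The scheme you run is the perturbed test‑function argument of \cite{CCKS}, and the one new ingredient needed for $\mu>0$ --- freezing $L:=\|D\phi\|_{L^\infty(B)}$, inflating the drift to $b+C'$ with $C'=\mu(2L+1)$ before solving for the corrector $v_k$, and then checking \emph{a posteriori} that $\|Dv_k\|_{L^\infty(B)}\le 1$ for $k$ large so that the quadratic term $\mu|Dv_k|(|D\phi_k|+|D\phi|)\le C'|Dv_k|$ really is absorbed --- is exactly the right device and carries no hidden circularity, since $C'$ depends only on the fixed $\phi$, $B$, $\mu$. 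The ABP and global $W^{2,p}$ machinery you invoke for the extremal equation with drift in $L^q$, $q>n$, is legitimately sourced from \cite{KSexist2009} and \cite{KSweakharnack}, independently of anything proved later in the paper, so there is no circular dependence on the paper's own Proposition \ref{W2,p solv}.

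Two caveats worth recording. First, your argument silently assumes $p>n$: the quantity $L=\|D\phi\|_{L^\infty(B)}$ is finite because $W^{2,p}_{\mathrm{loc}}\hookrightarrow C^1_{\mathrm{loc}}$, and the strict‑maximum normalization you invoke (the remark after Definition \ref{def Lp-viscosity sol}, citing \cite{KoikePerronRevisited}) is also stated for $p>n$. The proposition's hypothesis allows $p\ge n$; for the borderline $p=n$ your scheme as written does not apply and one has to fall back on \cite{arma2010}, as the paper itself does. Since every application of Proposition \ref{Lpquad} in this paper has $p>n$, this is a restriction, not an error, but it should be acknowledged. Second, the $C$‑viscosity addendum is sketched rather than proved: the equivalence result from \cite{CCKS} recalled in Section \ref{Preliminaries} requires bounded $b$ and $\mu=d\equiv 0$ and so cannot be cited wholesale; the mollification route you mention is the right one but needs one more sentence --- smooth $b$ and $h_k$, pass to $C^2$ correctors, and absorb the mollification error into $\varepsilon/2$ --- to be a proof rather than a remark.
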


\begin{rmk} \label{Lpquadencaixados}
Proposition \ref{Lpquad} is valid if we have $f_k \in L^p(\Omega_k)$, $u_k\in C(\Omega_k)$, for an increasing sequence of domains $\Omega_k \subset \Omega_{k+1}$ such that $\Omega :=\bigcup_{k\in \n} \Omega_k\,$, see proposition 1.5 in \cite{Winter}.
\end{rmk}

Denote $\mathcal{L}^\pm[u]:=\mathcal{M}^\pm_{\lambda,\Lambda} (D^2u)\pm b(x)|Du|$, where $b\in L^p_+(\Omega)$, and $F[u]:=F(x,u,Du,D^2u)$.

We recall Alexandrov-Bakelman-Pucci type results with unbounded ingredients and quadratic growth, which will be referred simply by ABP.

\begin{prop} \label{wABPquad}
Let $\Omega$ bounded, $\mu\geq 0$, $b\in L^q_+ (\Omega)$ and $f\in L^p (\Omega)$, for $q\geq p \geq n$, $q>n$.
Then, there exist $\delta=\delta (n,p,\lambda, \Lambda, \mathrm{diam}(\Omega), \|b\|_{L^q(\Omega)})>0$ such that if
\begin{align*}
\quad\quad\quad\quad \mu \|f^-\|_{L^p(\Omega)} \, (\mathrm{diam}(\Omega))^{\frac{n}{p}} \leq \delta  \quad\quad\quad (\mu \|f^+\|_{L^p(\Omega)}  \, (\mathrm{diam}(\Omega))^{\frac{n}{p}} \leq \delta )
\end{align*}
then every $u\in C(\overline{\Omega})$ which is an $L^p$-viscosity subsolution $($supersolution$)$ of
\begin{align*}
\mathcal{L}^+[u]+\mu |D u|^2 \geq f(x)\;\; \mathrm{in}\;\; \Omega\cap\{u>0\} \quad
\left( \mathcal{L}^-[u]-\mu |D u|^2 \leq f(x)\;\; \mathrm{in}\;\; \Omega\cap\{u<0\}  \,\right)
\end{align*}
satisfies, for a constant $C_A$ depending on $n,p,\lambda,\Lambda, \|b\|_{L^q(\Omega)},\mathrm{diam}(\Omega)$, the estimate
\begin{align*}
\max_{\overline{\Omega}} u \leq \max_{\partial \Omega} u +C_A \,  \|f^-\|_{L^p(\Omega)} \quad \left( \min_{\overline{\Omega}} u \geq \min_{\partial \Omega} u -C_A  \|f^+\|_{L^p(\Omega)} \right).
\end{align*}
Moreover, $C_A$ remains bounded if these quantities are bounded.
\end{prop}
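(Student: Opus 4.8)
The plan is to strip off the quadratic gradient term by an exponential change of unknown, reducing everything to the $\mu=0$ (linear drift) ABP inequality, which is available for $L^p$-viscosity solutions with $L^q$-drift in \cite{KSweakharnack}. First I would dispose of the supersolution statement by symmetry: using $\mathcal{M}^-_{\lambda,\Lambda}(X)=-\mathcal{M}^+_{\lambda,\Lambda}(-X)$ one checks that $v:=-u$ is an $L^p$-viscosity subsolution of $\mathcal{L}^+[v]+\mu|Dv|^2\geq -f$ in $\Omega\cap\{v>0\}$, with $(-f)^-=f^+$, so the two halves are equivalent and it suffices to treat the subsolution case. After subtracting the constant $\max_{\partial\Omega}u$ (which does not change $\mathcal{L}^+$ and only shrinks $\{u>0\}$), I may also assume $\max_{\partial\Omega}u\geq 0$.

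Next, set $M:=\max_{\overline{\Omega}}u$, $\gamma:=\mu/\lambda$ (if $\mu=0$ skip directly to the last paragraph), and $v:=\gamma^{-1}(e^{\gamma u}-1)$, a continuous function with the same sign and the same maximum points as $u$, so $\{v>0\}=\{u>0\}$. Formally $Dv=e^{\gamma u}Du$ and $D^2v=e^{\gamma u}(D^2u+\gamma\,Du\otimes Du)$, so the extremal inequality $\mathcal{M}^+_{\lambda,\Lambda}(A+\gamma\,p\otimes p)\geq \mathcal{M}^+_{\lambda,\Lambda}(A)+\gamma\lambda|p|^2$ together with the choice $\gamma\lambda=\mu$ makes the gradient‑squared terms cancel and gives $\mathcal{M}^+_{\lambda,\Lambda}(D^2v)+b|Dv|\geq e^{\gamma u}f$ in $\{v>0\}$. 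To make this rigorous for $L^p$-viscosity solutions I would transfer test functions: if $\tilde\phi\in W^{2,p}_{\mathrm{loc}}$ touches $v$ from above at an interior point $x_0\in\{v>0\}$, then near $x_0$ the quantity $1+\gamma\tilde\phi$ stays bounded away from $0$, so $\phi:=\gamma^{-1}\log(1+\gamma\tilde\phi)\in W^{2,p}_{\mathrm{loc}}$ near $x_0$ and touches $u$ from above at $x_0$; plugging $\phi$ into the viscosity inequality for $u$ and using the pointwise identities above yields the viscosity inequality for $v$. This is precisely the type of substitution used by Koike and Świech (cf. \cite{KSweakharnack}, \cite{KSexist2009}), whose bookkeeping — in particular the control of the right‑hand side through the transformation — I would follow.

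Since $e^{\gamma u}f\geq -e^{\gamma u}f^-\geq -e^{\gamma M}f^-$, the function $v$ is then an $L^p$-viscosity subsolution of $\mathcal{L}^+[v]\geq -e^{\gamma M}f^-$ in $\Omega\cap\{v>0\}$. Applying the linear ($\mu=0$) ABP estimate of \cite{KSweakharnack} — with constant $C_A=C_A(n,p,\lambda,\Lambda,\|b\|_{L^q(\Omega)},\mathrm{diam}(\Omega))$, which moreover stays bounded when the data are bounded — and recalling $\max_{\overline{\Omega}}v=\gamma^{-1}(e^{\gamma M}-1)$ and $\max_{\partial\Omega}v=\gamma^{-1}(e^{\gamma\max_{\partial\Omega}u}-1)$, I obtain
\[
e^{\gamma M}\;\leq\;e^{\gamma\max_{\partial\Omega}u}\;+\;\gamma\,C_A\,e^{\gamma M}\,\|f^-\|_{L^p(\Omega)}.
\]
If $\gamma C_A\|f^-\|_{L^p(\Omega)}\leq\tfrac12$ — which, since $\gamma C_A\|f^-\|_{L^p}=\lambda^{-1}C_A\,\mu\|f^-\|_{L^p}$ and $C_A$ carries the $\mathrm{diam}(\Omega)$-dependence, is ensured by the smallness hypothesis for a suitable $\delta=\delta(n,p,\lambda,\Lambda,\mathrm{diam}(\Omega),\|b\|_{L^q(\Omega)})$ — I absorb the last term: $e^{\gamma M}(1-\gamma C_A\|f^-\|_{L^p})\leq e^{\gamma\max_{\partial\Omega}u}$, and taking logarithms together with $-\log(1-t)\leq 2t$ for $t\in[0,\tfrac12]$ gives $\gamma M\leq \gamma\max_{\partial\Omega}u+2\gamma C_A\|f^-\|_{L^p}$, hence $M\leq \max_{\partial\Omega}u+2C_A\|f^-\|_{L^p(\Omega)}$, which is the claim (with $2C_A$ renamed $C_A$). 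I expect the only real obstacle to be the rigorous justification of the exponential substitution within the $L^p$-viscosity framework with unbounded ingredients — keeping the transformed test functions admissible and not spoiling the right‑hand side — rather than the now standard linear ABP to which it reduces.
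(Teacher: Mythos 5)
The paper gives no proof of Proposition~\ref{wABPquad}: after the statement it simply refers the reader to theorem 2.6 and lemma 9.3 of Koike and \'Swiech \cite{KSweakharnack}, theorem 3.4 of \cite{Naka}, and to \cite{tese} for the constant-$\mu$, $p>n$ case, so there is no in-paper argument to compare against. Your route --- the exponential (Hopf--Cole) change of unknown $v=\gamma^{-1}(e^{\gamma u}-1)$ with $\gamma=\mu/\lambda$, reduction to the linear ABP with $L^q$-drift, and a final logarithmic absorption --- is the standard one and is essentially what underlies those references. The core algebra is correct: $\mathcal{M}^+_{\lambda,\Lambda}(A+\gamma\,p\otimes p)\geq\mathcal{M}^+_{\lambda,\Lambda}(A)+\gamma\lambda|p|^2$ absorbs $\mu|Du|^2$ exactly, $e^{\gamma u}\leq e^{\gamma M}$ on $\{u>0\}$ keeps the right-hand side in $L^p$, and the smallness hypothesis is precisely what permits the absorption $e^{\gamma M}(1-\gamma C_A\|f^-\|_{L^p})\leq e^{\gamma\max_{\partial\Omega}u}$ and yields the estimate with $2C_A$.

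Two points need tightening. First, the preliminary normalization (``subtracting $\max_{\partial\Omega}u$ \ldots only shrinks $\{u>0\}$'') is valid only when $\max_{\partial\Omega}u\geq 0$; if $\max_{\partial\Omega}u<0$ the shift \emph{enlarges} $\{u>0\}$ and the differential inequality is not available on the new set. In fact you do not need this step: your computation with $M=\max_{\overline\Omega}u$ and $\max_{\partial\Omega}u$ works directly, the only genuine requirement being $M>0$ (if $M\leq 0$ then $\{u>0\}=\emptyset$ and no information is available; this corner case is handled by the usual $\max_{\partial\Omega}u^+$ convention for ABP). Second --- and you rightly flag it as the main technical burden --- the $L^p$-viscosity test-function transfer has to be carried out with care: after shifting $\tilde\phi$ by a constant so that it touches $v$ from above at $x_0$ (ensuring $1+\gamma\tilde\phi>0$ in a neighborhood), set $\phi=\gamma^{-1}\log(1+\gamma\tilde\phi)\in W^{2,p}_{\mathrm{loc}}$ and carry the a.e.\ strict inequality for $\tilde\phi$ back to one for $\phi$; the extra multiplicative factor $e^{\gamma\phi}$ is bounded above and below near $x_0$, so the $\varepsilon$ in Definition~\ref{def Lp-viscosity sol} survives with a positive, possibly smaller constant, contradicting the subsolution property of $u$ on $\{u>0\}$. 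With these two points cleaned up, the proof is correct.
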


As a matter of fact, ABP is valid under more general conditions, even for unbounded $\mu$. We refer to theorem 2.6 and lemma 9.3 in \cite{KSweakharnack}, and theorem 3.4 in \cite{Naka}, for a precise dependence on constants (see also \cite{KSmpite2007} and \cite{KSexist2009}).
For a simplified proof in the case where $\mu> 0$ is constant and $p>n$ (which is the only superlinear case that we need along the text) we also refer to \cite{tese}.

\begin{prop}{$(C^\beta$ Regularity\,$)$}
\label{Cbetaquad}
Assume $F$ satisfies \ref{SCmu} for $N=0$, $q=0$, $s=0$ and $b\in L^q_+(\Omega)$, for $q\geq p \geq n$, $q>n$. Let $u\in C(\Omega)$ be an $L^p$-viscosity solution of \eqref{F=f} with $f\in L^p(\Omega)$. Then there exists $\beta\in (0,1)$ depending on $n,p,\lambda, \Lambda$ and $ \|b\|_{L^q(\Omega)}$ such that $u\in C^{\beta}_{\textrm{loc}}(\Omega)$ and for any subdomain $\Omega^\prime \subset\subset\Omega$ we have
\begin{align*}
\|u\|_{C^\beta(\Omega^\prime)}\leq K_1 \,\{\|u\|_{L^\infty (\Omega)} + \|f\|_{L^p(\Omega)}+\|d\|_{L^p(\Omega)}\,\omega(\|u\|_{L^\infty (\Omega)}) \}
\end{align*}
where $K_1$ depends only on $n,p,\lambda,\Lambda,\mu, \|b\|_{L^q(\Omega)}, \|u\|_{L^\infty (\Omega^\prime)}, \mathrm{dist}(\Omega^\prime, \partial\Omega)$.

If, in addition, $u\in C (\overline{\Omega})\cap C^\tau (\partial\Omega)$ and $\Omega$ satisfies a uniform exterior cone condition with size $L$, then there exists $\beta_0= \beta_0 (n,p,\lambda, \Lambda, L,\|b\|_{L^q(\Omega)}) \in (0,1)$ and $\beta = \min (\beta_0, \frac{\tau}{2})$ such that
\begin{align*}
\|u\|_{C^\beta(\overline{\Omega})}\leq K_1 \,\{\|u\|_{L^\infty (\Omega)} + \|f\|_{L^p(\Omega)} + \|u\|_{C^\tau (\partial\Omega)} +\|d\|_{L^p(\Omega)}\,\omega(\|u\|_{L^\infty (\Omega)})\}
\end{align*}
where $K_1$ depends on $n,p,\lambda,\Lambda,\mu, L, \|b\|_{L^q(\Omega)},\omega (1)\|d\|_{L^p(\Omega)} ,\mathrm{diam} (\Omega),\|u\|_{L^\infty (\Omega)}$.
In both cases, $K_1$ remains bounded if these quantities are bounded.

The same result holds if, instead of a solution of \eqref{F=f}, $u$ is only an $L^p$-viscosity solution of the inequalities
$\mathcal{L}^-[u]-\mu |Du|^2\leq g(x)$ and $ \mathcal{L}^+[u]+\mu |Du|^2\geq -g(x)$ in $\Omega$.

If $\mu=0$, the final constant does not depend on a bound from above on $\|u\|_{L^\infty (\Omega)}$.
\end{prop}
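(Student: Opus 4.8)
The statement can in fact be extracted from Theorem~4 in \cite{arma2010} (case $p=n$) and Proposition~9.4 in \cite{KSweakharnack} (general exponents); I outline that route. First I would reduce \eqref{F=f} to a pair of extremal inequalities: by \ref{SCmu} with $s=q=Y=0$, recalling $F(\cdot,0,0,0)\equiv0$ and using $|u|\le\|u\|_{L^\infty(\Omega)}$ together with the monotonicity of $\omega$, an $L^p$-viscosity solution $u$ of \eqref{F=f} is an $L^p$-viscosity solution of
\[
\mathcal L^-[u]-\mu|Du|^2\le g,\qquad \mathcal L^+[u]+\mu|Du|^2\ge -g\quad\text{in }\Omega,
\]
with $g:=|f|+\omega(\|u\|_{L^\infty(\Omega)})\,d\in L^p(\Omega)$ and $\|g\|_{L^p(\Omega)}\le\|f\|_{L^p(\Omega)}+\omega(\|u\|_{L^\infty(\Omega)})\|d\|_{L^p(\Omega)}$. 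Hence it suffices to prove the last assertion of the proposition (the ``inequalities'' version); reinserting this $g$ gives the full statement.

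\emph{Interior estimate.} The heart is a weak Harnack inequality for the quadratic-growth class. Fix $B_{2r}\subset\subset\Omega$, let $M_{2r}=\sup_{B_{2r}}u$, $m_{2r}=\inf_{B_{2r}}u$, and put $z:=M_{2r}-u\ge0$; using $\mathcal M^-(-X)=-\mathcal M^+(X)$ and the second inequality above, $z$ is an $L^p$-viscosity supersolution of $\mathcal M^-_{\lambda,\Lambda}(D^2z)-b|Dz|-\mu|Dz|^2\le g$ in $B_{2r}$ (and $u-m_{2r}\ge0$ satisfies the same, from the first inequality). Now I would apply the Hopf--Cole change $\zeta:=\tfrac\lambda\mu\bigl(1-e^{-\mu z/\lambda}\bigr)$ (for $\mu>0$), which is smooth, increasing, concave, has $0\le\zeta'\le1$, and is bi-Lipschitz on $[0,\mathrm{osc}_{B_{2r}}u]$ with constants depending only on $\lambda,\mu,\|u\|_{L^\infty(\Omega)}$; using $\mathcal M^-(A+B)\le\mathcal M^-(A)+\mathcal M^+(B)$ and $\mathcal M^+(\rho\,p\otimes p)=\lambda\rho|p|^2$ for $\rho\le0$, the exponent $\mu/\lambda$ cancels the quadratic term exactly, so $\zeta\ge0$ is an $L^p$-viscosity supersolution of $\mathcal M^-_{\lambda,\Lambda}(D^2\zeta)-b|D\zeta|\le g$ — the passage through the $C^2$ change at the $L^p$-viscosity level being justified by testing with $\phi\in W^{2,p}_{\mathrm{loc}}$ and composing, using $p>n/2$. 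The classical weak Harnack inequality for linear-growth extremal supersolutions with $L^q$ drift, $q>n$ (\cite{Caf89}, \cite{CCKS}, \cite{Swiech}), applied to $\zeta$ and transported back through $\zeta^{-1}$, gives
\[
\Bigl(\tfrac1{|B_r|}\int_{B_r}z^{\varepsilon_0}\Bigr)^{1/\varepsilon_0}\le C\Bigl(\inf_{B_{r/2}}z+r^{1-n/p}\|g\|_{L^p(B_{2r})}\Bigr),
\]
and likewise with $u-m_{2r}$ replacing $z$, where $\varepsilon_0$ depends on $n,p,\lambda,\Lambda,\|b\|_{L^q}$ and $C$ additionally on $\mu,\|u\|_{L^\infty(\Omega)}$ through the bi-Lipschitz constants (when $\mu=0$ the change is the identity and $C$ involves no bound on $\|u\|_{L^\infty}$). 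Splitting $B_{r/2}$ according to whether $u\le\tfrac{M_{2r}+m_{2r}}2$ or $u\ge\tfrac{M_{2r}+m_{2r}}2$ on at least half of it and applying the above to $z$ or $u-m_{2r}$ accordingly yields $\mathrm{osc}_{B_{r/4}}u\le(1-c_0)\,\mathrm{osc}_{B_{2r}}u+Cr^{1-n/p}\|g\|_{L^p(B_{2r})}$; iterating over dyadic balls and summing the geometric series gives $\mathrm{osc}_{B_\rho(x)}u\le C\rho^\beta\bigl(\|u\|_{L^\infty(\Omega)}+\|g\|_{L^p(\Omega)}\bigr)$ on $\Omega'\subset\subset\Omega$, i.e.\ the interior bound, with the exponential factors absorbed into $C$.

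\emph{Global estimate.} Assume $\partial\Omega$ satisfies a uniform exterior cone condition of size $L$ and $u\in C(\overline\Omega)\cap C^\tau(\partial\Omega)$. For $x_0\in\partial\Omega$, the cone gives room to construct, on $\Omega\cap B_\rho(x_0)$, a barrier $h$ which is a supersolution of the linear-growth $\mathcal M^-$-inequality obtained after the Hopf--Cole change, with $h(x_0)=0$ and $c_1|x-x_0|^{\beta_0}\le h(x)$, via the usual $|x-x_0|^{-\gamma}$-type construction for Pucci operators, $\beta_0=\beta_0(n,p,\lambda,\Lambda,L,\|b\|_{L^q})$. Comparing $\pm(u-u(x_0))$ with $h$, the boundary modulus $\|u\|_{C^\tau(\partial\Omega)}\rho^\tau$ and the ABP error $C_A\|g\|_{L^p(\Omega)}$ of Proposition~\ref{wABPquad} yields a boundary Hölder modulus at $x_0$; balancing the barrier rate $\beta_0$ against the boundary term in the iteration forces $\beta=\min(\beta_0,\tfrac\tau2)$. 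Gluing with the interior estimate in the standard way (interior estimate on balls of radius comparable to $\mathrm{dist}(\cdot,\partial\Omega)$, combined with the boundary modulus) gives $u\in C^\beta(\overline\Omega)$ with the stated global bound.

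\emph{Main difficulty.} The crux is the interior step in this generality: legitimizing the Hopf--Cole substitution for $L^p$-viscosity solutions and proving the weak Harnack inequality with the correct dependence on $\|b\|_{L^q}$ and the exponential dependence on $\|u\|_{L^\infty(\Omega)}$ — this is precisely the Koike--Świech machinery of \cite{KSweakharnack}. A secondary technical point is the boundary barrier with merely $L^p$ drift under an exterior cone (rather than ball) condition, and the bookkeeping leading to the exponent $\tau/2$.
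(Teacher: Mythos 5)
Your proposal is correct in substance, but it takes a genuinely different route from the paper's own proof, which is essentially a one-paragraph citation: the paper simply refers to the proof of Theorem~2 in \cite{arma2010}, indicates the exponent adaptations needed to pass from $L^n$ data there to $L^p$ data here (replacing the growth-lemma exponent $\rho$ by $\rho^{1-n/p}$ via Proposition~\ref{wABPquad}), and notes that the zero-order term is absorbed into the right-hand side. The difference lies in how the quadratic gradient term is handled. You perform an explicit Hopf--Cole change of unknown $\zeta=\tfrac{\lambda}{\mu}(1-e^{-\mu z/\lambda})$ to kill $\mu|Dz|^2$ exactly and then invoke the classical linear-growth weak Harnack inequality with $L^q$ drift, transporting the estimate back through the bi-Lipschitz map; the Krylov--Safonov growth lemmas cited in \cite{arma2010}, and used throughout Koike--\'Swiech's machinery, are instead adapted directly to the quadratic extremal inequalities, precisely so as to avoid the substitution at the $L^p$-viscosity level (which is where the chief technical subtlety in your route lies). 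Your approach buys transparency and a direct line to the linear theory, but it works only because $\mu$ is a constant (as the proposition assumes) and it concentrates all the difficulty into two points you leave at sketch level: (a) legitimising the composition with $T^{-1}$ against $W^{2,p}_{\mathrm{loc}}$ test functions — this does work for $p>n/2$ via the chain rule and the embedding $W^{2,p}_{\mathrm{loc}}\hookrightarrow W^{1,2p}_{\mathrm{loc}}$, but it must be written out, since the whole $L^p$-viscosity calculus is built to avoid it; and (b) the boundary step, where you assert the exponent $\beta=\min(\beta_0,\tau/2)$ by ``balancing'' without actually deriving the factor $\tfrac12$ (this $\tfrac12$ is an artefact of the specific gluing/iteration scheme in \cite{arma2010}, and a careful barrier argument must reproduce it rather than $\min(\beta_0,\tau)$). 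The route via \cite{arma2010}, by contrast, inherits all these delicate points from a single reference. Both yield the stated dependence of $K_1$ (in particular the $\|u\|_{L^\infty}$ dependence only when $\mu>0$, entering in your version through the bi-Lipschitz constants of the Hopf--Cole map).
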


\begin{proof} This is a direct consequence of the proof of theorem 2 in \cite{arma2010}, reading the $L^n$-viscosity sense there as $L^p$-viscosity one, changing $b\in L^p, \, d,f\in L^n$ there by $b\in L^q, \,d,f\in L^p$. The corresponding growth lemmas and exponents concerning $\rho$ must be replaced by $\rho^{1-\frac{n}{p}}$, which appear by using proposition \ref{wABPquad} (for $\mu=0$) instead of theorem 3 there.

The zero order term is handled as being part of the right hand side, since the whole proof is valid if we only have $u$ as an $L^p$-viscosity solution of inequalities $\mathcal{L}^+ [u]\geq - g(x)$ and $\mathcal{L}^-[u]\leq g(x)$ in the case $\mu=0$ (see the final remark in the end of the proof of theorem 2 in \cite{arma2010}).
\end{proof}

Next we recall some important results concerning the strong maximum principle and Hopf lemma. For a proof for $L^p$-viscosity solutions with unbounded coefficients, see \cite{B2016}, which in particular generalize the results for $C$-viscosity solutions in \cite{BardidaLio}.
We will refer to them simply by \textit{SMP} and \textit{Hopf} throughout the text.

\begin{teo}{$($SMP\,$)$}\label{SMP}
Let $\Omega$ be a $C^{1,1}$ domain and $u$ an $L^p$-viscosity solution of $\mathcal{L}^-[u]-du\leq 0$, $u\geq 0$ in $\Omega$, where $d\in L^p(\Omega)$. Then either $u>0$ in $\Omega$ or $u\equiv 0$ in $\Omega$.
\end{teo}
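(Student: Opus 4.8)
The plan is to prove the dichotomy by the classical open-closed argument applied to the coincidence set $Z:=\{x\in\Omega:\ u(x)=0\}$. Since $u\in C(\Omega)$, the set $Z$ is relatively closed in $\Omega$, so the whole point is to show that $Z$ is open; once that is established, connectedness of $\Omega$ forces $Z=\emptyset$ or $Z=\Omega$, i.e. $u>0$ in $\Omega$ or $u\equiv 0$ in $\Omega$. Note that the $C^{1,1}$ regularity of $\partial\Omega$ plays no role in this interior statement; it is only relevant for the companion Hopf lemma.

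To prove that $Z$ is open, I would first recast the inequality in a form suited to a Harnack-type estimate. Writing $d=d^+-d^-$ and using $u\geq 0$, $d^-\geq 0$, the hypothesis $\mathcal{L}^-[u]-d\,u\leq 0$ yields
$$
\mathcal{M}^-_{\lambda,\Lambda}(D^2u)-b(x)\,|Du|-d^+(x)\,u\ \leq\ -d^-(x)\,u\ \leq\ 0\qquad\text{in }\Omega
$$
in the $L^p$-viscosity sense, with $b,\,d^+\in L^p_+(\Omega)$; hence $u$ is a nonnegative $L^p$-viscosity supersolution of a Pucci extremal inequality with a nonnegative zero order coefficient. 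Fix $x_0\in Z$ and choose $r>0$ with $B_{3r}(x_0)\subset\subset\Omega$. The weak Harnack inequality for $L^p$-viscosity supersolutions with unbounded ingredients, including a nonnegative zero order coefficient (Koike-\'Swiech, see \cite{KSweakharnack} and also \cite{KSmpite2007}, \cite{KSexist2009}), provides $p_0>0$ and $C>0$, depending only on $n,p,\lambda,\Lambda$ and on $r^{1-n/p}\|b\|_{L^p(B_{3r}(x_0))}$, $r^{2-n/p}\|d^+\|_{L^p(B_{3r}(x_0))}$, such that
$$
\Big(\frac{1}{|B_{2r}(x_0)|}\int_{B_{2r}(x_0)}u^{p_0}\d x\Big)^{1/p_0}\ \leq\ C\,\inf_{B_r(x_0)}u .
$$
Since $u(x_0)=0$ and $u\geq 0$, the right-hand side is $0$, so $u\equiv 0$ on $B_{2r}(x_0)$; thus $B_{2r}(x_0)\subset Z$ and $Z$ is open, which finishes the argument.

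The only genuinely nontrivial ingredient, and the main obstacle, is the weak Harnack inequality itself in the presence of merely $L^p$ (rather than $L^\infty$) lower order coefficients: the Krylov-Safonov measure estimates have to be run through the $L^p$-viscosity ABP bound of Proposition \ref{wABPquad} together with a covering argument, which is exactly what is carried out in \cite{KSweakharnack}. If one prefers to avoid the weak Harnack, there is the classical route through barriers and Hopf's lemma: by connectedness pick $z\in\{u>0\}$ with $\rho:=\mathrm{dist}(z,Z)<\mathrm{dist}(z,\partial\Omega)$, so that some $y_0\in\partial B_\rho(z)$ lies in $Z$; on the annulus $B_\rho(z)\setminus\overline{B_{\rho/2}(z)}$ build a barrier $w$ which is positive in the interior, vanishes on $\partial B_\rho(z)$, and is dominated by $u$ on $\partial B_{\rho/2}(z)$; then $w\leq u$ on the annulus by the comparison principle (a consequence of Proposition \ref{wABPquad}), and a contradiction at the contact point $y_0$ follows from Hopf's lemma. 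The obstacle here is that, since $\|b\|_{L^\infty}$ is not available, the usual exponential barrier $e^{-\gamma|x-z|^2}-e^{-\gamma\rho^2}$ does not give a subsolution on the annulus for any choice of $\gamma$, and must be replaced by the growth-lemma type barriers for $L^p$-viscosity solutions with unbounded coefficients constructed in \cite{B2016}, which extends the $C$-viscosity argument of \cite{BardidaLio}.
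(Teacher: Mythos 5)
Your reduction to a coercive inequality, via $d=d^+-d^-$ and $d^-u\geq 0$ (since $u\geq 0$, $d^-\geq 0$), is exactly the remark the paper itself makes immediately after stating Theorems \ref{SMP} and \ref{Hopf}; from there the paper simply defers to \cite{B2016}, where the SMP is established for $d\equiv 0$ by growth-lemma and barrier arguments, noting that the same proof applies verbatim to any coercive operator. You instead give a self-contained proof through the open-closed decomposition of the nodal set $Z=\{u=0\}$ together with a weak Harnack estimate, which is a legitimate and arguably more transparent alternative. Your observation that the $C^{1,1}$ smoothness of $\partial\Omega$ plays no role in this interior statement, being needed only for the companion Hopf lemma, is also correct.

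One point calls for more care. The weak Harnack inequality in \cite{KSweakharnack} (and its companions \cite{KSmpite2007}, \cite{KSexist2009}) is stated for inequalities of the form $\mathcal{M}^-_{\lambda,\Lambda}(D^2u)-\mu|Du|^2-b(x)|Du|\leq f(x)$ with $f\in L^p$ and no zero-order coefficient built into the operator. The version you invoke, with $-d^+(x)u$ absorbed so that the estimate reads
$\bigl(\fint_{B_{2r}}u^{p_0}\bigr)^{1/p_0}\leq C\inf_{B_r}u$
with $C$ depending on $r^{2-n/p}\|d^+\|_{L^p(B_{3r})}$, is not literally the statement in that reference. It does hold (the Krylov--Safonov measure-theoretic argument is insensitive to a nonnegative zero-order coefficient; alternatively, put $d^+u$ on the right-hand side, rescale to $B_{3r}(x_0)$, use that $r^{2-n/p}\|d^+\|_{L^p(B_{3r}(x_0))}\to 0$ as $r\to 0$, and iterate), but this short step should be supplied rather than attributed to the cited theorem. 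Your alternative barrier-and-Hopf route in the final paragraph is in fact closer in spirit to what \cite{B2016} does, and your observation that the usual exponential barrier fails when $b$ is merely in $L^p$ is accurate.
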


\begin{teo}{$($Hopf\,$)$}\label{Hopf}
Let $\Omega$ be a $C^{1,1}$ domain and $u$ an $L^p$-viscosity solution of $\mathcal{L}^-[u]-du\leq 0$, $u> 0$ in $\Omega$, where $d\in L^p(\Omega)$. If $u(x_0)=0$ for some $x_0\in\partial\Omega$, then $\partial_\nu u(x_0)>0$, where $\partial_\nu$ is the derivative in the direction of the interior unit normal.
\end{teo}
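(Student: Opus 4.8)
The plan is to prove the Hopf boundary point lemma by the barrier method, with the modifications forced by the unbounded first-order coefficient $b\in L^p(\Omega)$ ($p>n$) in $\mathcal{L}^-$ and the unbounded zero order term $d\in L^p(\Omega)$. First, two reductions. Since $u>0$ in $\Omega$ we have $-d(x)u(x)\geq -d^+(x)u(x)$, so $u$ is also an $L^p$-viscosity supersolution of $\mathcal{M}^-_{\lambda,\Lambda}(D^2u)-b|Du|-d^+u\leq 0$ in $\Omega$, and thus we may assume $d\geq 0$. Next, as $u$ is merely continuous, the conclusion $\partial_\nu u(x_0)>0$ is to be read as $\liminf_{t\to 0^+}t^{-1}\,(u(x_0+t\nu)-u(x_0))=\liminf_{t\to 0^+}t^{-1}u(x_0+t\nu)>0$, so it suffices to produce, near $x_0$, a lower barrier $w\leq u$ with $\liminf_{t\to 0^+}t^{-1}w(x_0+t\nu)>0$. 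For the geometry, $\partial\Omega\in C^{1,1}$ yields a uniform interior ball condition, so there are $R>0$ and $y\in\Omega$ with $B_R(y)\subset\Omega$ and $\overline{B_R(y)}\cap\partial\Omega=\{x_0\}$, the interior unit normal at $x_0$ being $\nu=(y-x_0)/R$; I will work on the annulus $\mathcal{A}:=B_R(y)\setminus\overline{B_{R/2}(y)}\subset\subset\Omega$, where $m:=\min_{\partial B_{R/2}(y)}u>0$ by continuity and positivity of $u$ on the compact set $\partial B_{R/2}(y)$, while $u\geq 0$ on $\partial B_R(y)$ with $u(x_0)=0$.

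The classical barrier is $w:=\sigma(e^{-\gamma|x-y|^2}-e^{-\gamma R^2})$ with $\gamma$ large and $\sigma>0$ normalized so that $w=m$ on $\partial B_{R/2}(y)$ and $w=0$ on $\partial B_R(y)$; then $0\leq w\leq m$ on $\mathcal{A}$, $u\geq w$ on $\partial\mathcal{A}$, and $\lim_{t\to 0^+}t^{-1}w(x_0+t\nu)=|Dw(x_0)|=2\sigma\gamma R\,e^{-\gamma R^2}>0$. Computing the eigenvalues of $D^2w$ (one radial, positive on $\mathcal{A}$ once $\gamma$ is large; $n-1$ tangential, negative) gives, a.e.\ in $\mathcal{A}$, $\mathcal{M}^-_{\lambda,\Lambda}(D^2w)-b|Dw|-dw\geq c_0-C(b+d)$ for some $c_0=c_0(n,\lambda,\Lambda,R,\sigma,\gamma)>0$ and $C=C(n,\lambda,\Lambda,R,\sigma,\gamma)$. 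If $b$ and $d$ are bounded, taking $\gamma$ so large that $c_0\geq C(\|b\|_{L^\infty(\Omega)}+\|d\|_{L^\infty(\Omega)})$ makes $w$ an exact strong — hence, by Proposition \ref{Lpiffstrong.quad}, $L^p$-viscosity — subsolution of $\mathcal{M}^-_{\lambda,\Lambda}(D^2w)-b|Dw|-dw\geq 0$ in $\mathcal{A}$; the comparison principle for $L^p$-viscosity sub/supersolutions of such extremal equations (which follows from the ABP estimate, Proposition \ref{wABPquad} with $\mu=0$, applied to $u-w$ together with $\mathcal{M}^-_{\lambda,\Lambda}(X+Y)\geq\mathcal{M}^-_{\lambda,\Lambda}(X)+\mathcal{M}^-_{\lambda,\Lambda}(Y)$) then yields $u\geq w$ in $\mathcal{A}$, whence $\liminf_{t\to 0^+}t^{-1}u(x_0+t\nu)\geq \lim_{t\to 0^+}t^{-1}w(x_0+t\nu)>0$, which is Hopf.

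The main obstacle is the passage to unbounded $b,d\in L^p$: then the explicit barrier is only an approximate subsolution, $\mathcal{M}^-_{\lambda,\Lambda}(D^2w)-b|Dw|-dw\geq -g$ in $\mathcal{A}$ with $g:=(C(b+d)-c_0)^+\in L^p(\mathcal{A})$ not identically zero, and the ABP comparison only gives $u\geq w-C_A\|g\|_{L^p(\mathcal{A})}$; since the error is a fixed positive constant while $w(x_0+t\nu)\to 0$ linearly in $t$, this is useless near $x_0$, and the defect cannot be removed by rescaling (the barrier and the additive error scale in the same way). Indeed no smooth explicit barrier can be an exact subsolution when $b$ is unbounded, since then $\mathcal{M}^-_{\lambda,\Lambda}(D^2w)-b|Dw|$ is negative on a set of positive measure near $x_0$. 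The remedy — which is the technical heart of the statement, carried out for unbounded $L^p$ coefficients in \cite{B2016} (generalizing the $C$-viscosity, bounded-coefficient case of \cite{BardidaLio}) — is to replace the explicit barrier by a genuine strong ($W^{2,p}(\mathcal{A})$, hence $C^1(\overline{\mathcal{A}})$) subsolution, namely the solution of the auxiliary Dirichlet problem $\mathcal{M}^-_{\lambda,\Lambda}(D^2w)-b|Dw|-dw=0$ in $\mathcal{A}$, $w=m$ on $\partial B_{R/2}(y)$, $w=0$ on $\partial B_R(y)$ (which exists by the $W^{2,p}$ solvability theory for Dirichlet problems of such operators and is positive in $\mathcal{A}$ by the strong maximum principle, Theorem \ref{SMP}, after checking $w\geq 0$ via the ABP minimum principle); then $u\geq w$ in $\mathcal{A}$ still follows by comparison via Proposition \ref{wABPquad} (valid also for unbounded $b$), and one is reduced to showing that this solved barrier itself satisfies $\partial_\nu w(x_0)>0$ — i.e.\ to the Hopf estimate for $W^{2,p}$-strong solutions with $L^p$ lower-order coefficients — which is the quantitative point established through a boundary barrier together with the ABP and weak Harnack inequalities available for unbounded coefficients. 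Two routine points remain: one may flatten the relevant patch of $\partial\Omega$ by a $C^{1,1}$ diffeomorphism, which turns $\mathcal{L}^\pm$ into an operator of the same type with admissibly modified $b$ and ellipticity constants, so the touching piece may be taken flat; and the reading of $\partial_\nu u(x_0)$ as a lower Dini derivative is the intended one, since $u$ need not be $C^1$ up to $\partial\Omega$.
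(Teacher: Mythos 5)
For calibration: the paper does not actually prove Theorem \ref{Hopf}. It cites \cite{B2016} for the case $d\equiv 0$ (remarking that the same proof works for coercive operators), and adds only one observation — that the general case follows by splitting $d=d^+-d^-$ and using $d^-u\geq 0$ because $u$ has a sign. Your opening reduction to $d\geq 0$ is exactly that observation, and it is correct; so are the reading of $\partial_\nu u(x_0)$ as a lower Dini derivative, the interior-ball setup from $\partial\Omega\in C^{1,1}$, the diagnosis of why the classical exponential barrier fails for unbounded $b$ (the ABP comparison only yields $u\geq w-C_A\|g\|_{L^p}$, an additive constant error that is fatal where $w$ vanishes linearly), and the comparison $u\geq w$ once a strong subsolution barrier $w$ is available (via Proposition \ref{Lpiffstrong.quad} and Proposition \ref{wABPquad} applied on $\{w>u\}$, where $-d^+(u-w)\geq 0$).

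The gap is at the decisive step. Replacing the explicit barrier by the solution $w\in W^{2,p}$ of the auxiliary Dirichlet problem on the annulus does not dispose of the difficulty; it relocates it verbatim. Indeed $w$ is itself a positive strong (hence $L^p$-viscosity) supersolution of $\mathcal{M}^-_{\lambda,\Lambda}(D^2w)-b|Dw|-dw\leq 0$ vanishing at $x_0$, so the claim $\partial_\nu w(x_0)>0$ is precisely the statement of Theorem \ref{Hopf} for $w$; the only gain is that $w\in C^1(\overline{\mathcal{A}})$, which guarantees the normal derivative exists but not that it is nonzero ($w$ could a priori vanish to higher order — note that $w$ being a strong subsolution of the pure second-order inequality $\mathcal{M}^-_{\lambda,\Lambda}(D^2w)\geq 0$, which is all one gets for free from $b,d\geq 0$ and $w>0$, is the wrong direction for Hopf, as $x_n^2$ on a half-space shows). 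Your own observation that no smooth explicit function can be an exact subsolution of the extremal inequality where $b$ is unbounded applies to this reduced problem exactly as it did to the original one, so the closing appeal to ``a boundary barrier together with ABP and weak Harnack'' is not an argument but a pointer to the theorem being proved. The missing quantitative content — a boundary growth or boundary weak Harnack estimate for supersolutions with $L^p$ drift, giving $w\geq c\,\mathrm{dist}(\cdot,\partial\Omega)$ near $x_0$ — is precisely what \cite{B2016} supplies, and neither your proposal nor the paper reproduces it. As a citation-plus-reduction your write-up matches the paper's treatment; as a self-contained proof it is missing its core estimate.
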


In \cite{B2016}, theorems \ref{SMP} and \ref{Hopf} are proved for $d\equiv 0$, but exactly the same proofs there work for any coercive operator. Moreover, since the function $u$ has a sign, they are also valid for nonproper operators, by splitting the positive and negative parts of $d$ and using $d^- u\geq 0$.

We finish the section recalling some results about pure second order operators $F(D^2 u)$, i.e. uniformly elliptic operators $F$ depending only on $X$ (so Lipschitz continuous in $X$) and satisfying $F(0)=0$. These operators will play the role of $F(0,0,0,X)$ in the approximation lemmas.

The next proposition is corollary 5.7 in \cite{CafCab}, which deals with $C^{1,\bar{\alpha}}$ interior regularity.

\begin{prop}
\label{C1,baralpha}
Let $u$ be a $C$-viscosity solution of $F(D^2 u)=0$ in $B_1$. Then $u\in C^{1,\bar{\alpha}} (\overline{B}_{1/2})$ for some universal $\bar{\alpha}\in (0,1)$ and there exists a constant $K_2$, depending on $n,\lambda$ and $\Lambda$, such that
$$\|u\|_{C^{1,\bar{\alpha}} (\overline{B}_{1/2})} \leq K_2 \, \|u\|_{L^\infty (B_1)} . $$
\end{prop}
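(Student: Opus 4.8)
The plan is to exploit the two special features of the equation here --- $F$ depends only on $X$, so $F(D^2\cdot)=0$ is translation invariant, and $F(0)=0$ --- together with the interior $C^\beta$ theory already recorded. First I would normalize: dividing $u$ by $\|u\|_{L^\infty(B_1)}+\delta$ and letting $\delta\to0^+$ reduces the statement to showing $\|u\|_{C^{1,\bar\alpha}(\overline B_{1/2})}\le K_2$ whenever $\|u\|_{L^\infty(B_1)}\le1$, with $K_2=K_2(n,\lambda,\Lambda)$. Since $F$ is uniformly elliptic with $F(0)=0$ it satisfies \ref{SCmu} with $b\equiv d\equiv0$ and $\mu=0$, hence $u$ is a viscosity solution of the extremal inequalities $\mathcal M^-_{\lambda,\Lambda}(D^2u)\le0\le\mathcal M^+_{\lambda,\Lambda}(D^2u)$; Proposition~\ref{Cbetaquad}, with all lower-order ingredients vanishing, then yields a universal $\gamma=\gamma(n,\lambda,\Lambda)\in(0,1)$ and $\|u\|_{C^\gamma(\overline B_{3/4})}\le C$. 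I would shrink $\gamma$ slightly so that $2^j\gamma\neq1$ for all $j$.

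The heart of the proof is a bootstrap of the Hölder exponent through incremental quotients. For $e\in\mathbb S^{n-1}$ and small $|h|$, put $w=w_{h,e}:=|h|^{-1}\big(u(\cdot+he)-u(\cdot)\big)$; since $u(\cdot+he)$ and $u$ solve the same translation-invariant uniformly elliptic equation, their difference lies in the Pucci class, i.e. $\mathcal M^-_{\lambda,\Lambda}(D^2w)\le0\le\mathcal M^+_{\lambda,\Lambda}(D^2w)$ on a slightly smaller ball. Applying Proposition~\ref{Cbetaquad} to $w$ (where the case $\mu=0$ makes the constant independent of $\|w\|_{L^\infty}$) and using $\|w\|_{L^\infty}\le[u]_{C^\gamma}|h|^{\gamma-1}$, I get $[w]_{C^\gamma}\le C|h|^{\gamma-1}$ on a fixed interior ball. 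Evaluating $w$ at two points at distance $\le|h|$ turns this into a uniform bound on the second differences of $u$,
\[
\big|u(x+he)-2u(x)+u(x-he)\big|\le C\,|h|^{2\gamma},
\]
with $C$ universal. By the elementary description of Hölder and Zygmund classes in terms of second differences, this upgrades $u$ to $C^{2\gamma}$ if $2\gamma<1$, and to $C^{1,2\gamma-1}$ if $2\gamma>1$, on a slightly smaller ball and with a universal norm bound. Iterating --- each round essentially doubling the exponent until it passes $1/2$, then one further round landing in $C^{1,\bar\alpha}$ --- produces, after finitely many steps (their number depending only on $\gamma$), a universal $\bar\alpha\in(0,1)$ with $\|u\|_{C^{1,\bar\alpha}(\overline B_{9/16})}\le C$.

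To finish I would bookkeep the finitely many universal radii and constants and use a standard rescaling/covering argument to pass from $\overline B_{9/16}$ to $\overline B_{1/2}$ and to undo the normalization, arriving at $\|u\|_{C^{1,\bar\alpha}(\overline B_{1/2})}\le K_2\|u\|_{L^\infty(B_1)}$. Alternatively, as soon as $u$ is known to be Lipschitz one concludes in one stroke: each $\partial_e u$ is then a bounded viscosity solution of the Pucci inequalities, so Proposition~\ref{Cbetaquad} gives $\partial_e u\in C^{\bar\alpha}$ with the required estimate, and identifies $\bar\alpha$ with the Krylov--Safonov exponent. The step I expect to be the main obstacle is the apparent circularity between an interior Lipschitz bound and $C^{1,\alpha}$ regularity: there is no oscillation of $F$ in $x$ to exploit, so Caffarelli's usual iteration against a frozen equation is not available here, and it is precisely the incremental-quotient bootstrap --- which needs only the a priori $C^\gamma$ bound and the translation invariance of the equation --- that breaks the loop.
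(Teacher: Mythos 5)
Your proof is correct, and it is essentially the argument behind the reference the paper invokes (the proposition is simply cited as Corollary 5.7 of Caffarelli--Cabr\'e, whose own proof is precisely the incremental-quotient/translation-invariance bootstrap on top of Krylov--Safonov that you describe). Two small inaccuracies worth flagging: the exponent gain per round is additive, not multiplicative --- applying the $C^{\gamma}$ estimate to $w_{h,e}$ when $u\in C^{\alpha}$ gives $\bigl|\Delta_{h}^{2}u\bigr|\le C|h|^{\alpha+\gamma}$, so the sequence of exponents is $\gamma,2\gamma,3\gamma,\dots$ rather than $2^{j}\gamma$, and the value to avoid is $k\gamma=1$ for integer $k$ rather than $2^{j}\gamma=1$ --- and in the short-cut at the end ``Lipschitz'' already suffices because from $\|w_{h,e}\|_{L^{\infty}}\le\mathrm{Lip}(u)$ one gets $|\Delta_{h}^{2}u|\le C|h|^{1+\gamma_{0}}$ and hence $u\in C^{1,\gamma_{0}}$ directly by the second-difference characterization, without needing first to upgrade to $C^{1}$ to identify $w_{h,e}\to\partial_{e}u$. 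Neither point affects the validity of the argument.
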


We also need the following result about solvability of the Dirichlet problem for $F(D^2 u)$.
\begin{prop}
\label{ExisUnicF(D2u)}
Let $\Omega$ satisfies a uniform exterior cone condition, $\psi\in C(\partial\Omega)$. Then there exists a unique $C$-viscosity solution $u\in C(\overline{\Omega})$ of
\begin{align*}
\left\{
\begin{array}{rclcc}
F(D^2 u)&=& 0  &\mbox{in} &\Omega\\
u &=& \psi &\mbox{on} & \partial\Omega\, .
\end{array}
\right.
\end{align*}
\end{prop}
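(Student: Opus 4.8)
The plan is to establish uniqueness by the comparison principle and existence by Perron's method combined with boundary barriers; no unbounded coefficients intervene here, so the classical machinery for $C$-viscosity solutions of uniformly elliptic equations applies directly.

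\textbf{Uniqueness.} Since $F$ is uniformly elliptic with $F(0)=0$, it satisfies \ref{SCmu} with $b=d\equiv 0$, $\mu=0$; in particular $F(X)\le \mathcal{M}^+_{\lambda,\Lambda}(X)$ and $F(X)\ge \mathcal{M}^-_{\lambda,\Lambda}(X)$ for all $X$. If $u$ is a subsolution and $v$ a supersolution with $u\le v$ on $\partial\Omega$, then $u-v$ is a $C$-viscosity subsolution of $\mathcal{M}^-_{\lambda,\Lambda}(D^2(u-v))\le 0$, so by Proposition \ref{wABPquad} (applied with $\mu=0$, $b\equiv 0$, $f\equiv 0$) it attains its maximum on $\partial\Omega$, whence $u\le v$ in $\overline\Omega$. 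In particular the Dirichlet problem has at most one solution.

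\textbf{Existence in $\Omega$.} Consider the Perron class
\[
\mathcal{S}=\{\, v\in C(\overline\Omega)\ :\ v \text{ is a }C\text{-viscosity subsolution of }F(D^2v)=0\text{ in }\Omega,\ v\le\psi\text{ on }\partial\Omega\,\}.
\]
This class is nonempty, since the constant $\min_{\partial\Omega}\psi$ belongs to it (constants solve $F(D^2 v)=0$ because $F(0)=0$), and every $v\in\mathcal{S}$ satisfies $v\le\max_{\partial\Omega}\psi$ in $\overline\Omega$ by the comparison step above. Set $u(x):=\sup_{v\in\mathcal{S}}v(x)$. The standard Perron argument for viscosity solutions shows that the upper semicontinuous envelope $u^*$ is a $C$-viscosity subsolution of $F(D^2u)=0$ in $\Omega$ and that the lower semicontinuous envelope $u_*$ is a supersolution: were $u_*$ not a supersolution at some interior point $x_1$, one could add a small quadratic cap to a nearly optimal element of $\mathcal{S}$ and stay in $\mathcal{S}$ while exceeding $u$ near $x_1$, contradicting maximality. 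By the comparison principle $u^*\le u_*$, hence $u=u^*=u_*\in C(\Omega)$ is a $C$-viscosity solution of $F(D^2u)=0$ in $\Omega$.

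\textbf{Boundary continuity.} Fix $\xi\in\partial\Omega$. By the uniform exterior cone condition there is a finite cone with vertex $\xi$, of aperture and height independent of $\xi$, meeting $\overline\Omega$ only at $\xi$ near $\xi$. Shifting the vertex slightly into the exterior one constructs a Miller-type radial barrier $\Phi_\xi(x)=A\bigl(|x-y_\xi|^{-\gamma}-R^{-\gamma}\bigr)$ (or a logarithmic variant), with $\gamma=\gamma(n,\lambda,\Lambda)$ chosen large enough that $\mathcal{M}^+_{\lambda,\Lambda}(D^2\Phi_\xi)\le 0$ on an annulus containing $\Omega$; then $\Phi_\xi\ge 0$ on $\overline\Omega$, $\Phi_\xi(\xi)=0$, and $\Phi_\xi$ is bounded below by a positive constant on $\partial\Omega\setminus B_\rho(\xi)$ for every $\rho>0$. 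Since $F(\cdot)\le\mathcal{M}^+_{\lambda,\Lambda}(\cdot)$, every $c\,\Phi_\xi$ with $c\ge 0$ is a supersolution, and $-c\,\Phi_\xi$ a subsolution. Given $\varepsilon>0$, pick $\rho$ so small that $|\psi-\psi(\xi)|<\varepsilon$ on $\partial\Omega\cap B_\rho(\xi)$ and then $C_\varepsilon$ so large that $\psi(\xi)+\varepsilon+C_\varepsilon\Phi_\xi\ge\max_{\partial\Omega}\psi$ on $\partial\Omega\setminus B_\rho(\xi)$; the resulting supersolution dominates $\psi$ on all of $\partial\Omega$, hence dominates every element of $\mathcal{S}$, hence $u\le\psi(\xi)+\varepsilon+C_\varepsilon\Phi_\xi$ in $\Omega$. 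Symmetrically $\psi(\xi)-\varepsilon-C_\varepsilon\Phi_\xi\in\mathcal{S}$, so it lies below $u$. Letting $x\to\xi$ and then $\varepsilon\to0$ yields $\lim_{x\to\xi}u(x)=\psi(\xi)$, so $u$ extends to an element of $C(\overline\Omega)$ with $u=\psi$ on $\partial\Omega$.

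\textbf{Main obstacle.} The only delicate point is the boundary barrier under merely a uniform exterior cone condition (as opposed to, say, an exterior ball): one must verify that the radial profile makes the Pucci maximal operator nonpositive on the relevant region and that, after scaling, it controls the oscillation of $\psi$ near $\xi$. This is classical for uniformly elliptic operators, but requires handling the Pucci ordering with some care; the interior Perron step and uniqueness are routine once $F(X)\le\mathcal{M}^+_{\lambda,\Lambda}(X)$, $F(X)\ge\mathcal{M}^-_{\lambda,\Lambda}(X)$, $F(0)=0$ and Proposition \ref{wABPquad} are in hand.
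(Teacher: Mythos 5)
Your proposal follows the same overall strategy as the paper (uniqueness by comparison, existence by Perron's method plus boundary barriers), but two steps are glossed over in a way that leaves genuine gaps, both of which the paper avoids by citation.

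First, in the uniqueness step, the claim that $u-v$ lies in the relevant Pucci extremal class is exactly the content of the comparison theory and is not a formal consequence of the structure inequalities: for merely continuous viscosity sub- and supersolutions one cannot simply subtract, and one needs the theorem on sums / Jensen--Ishii machinery (Theorem~5.3 in Caffarelli--Cabr\'e, which the paper cites together with Corollary~3.7 and Corollary~5.4). Also, the operator should be $\mathcal{M}^+_{\lambda,\Lambda}$, not $\mathcal{M}^-_{\lambda,\Lambda}$: the Pucci bound $F(X)-F(Y)\le\mathcal{M}^+_{\lambda,\Lambda}(X-Y)$ is what puts $u-v$ in the subsolution class $\overline{S}(\lambda,\Lambda,0)$.

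Second, and more substantially, the barrier $\Phi_\xi(x)=A\bigl(|x-y_\xi|^{-\gamma}-R^{-\gamma}\bigr)$ is the exterior-\emph{ball} barrier, not the exterior-\emph{cone} one. Under a uniform exterior cone condition, pushing the vertex $y_\xi$ slightly into the exterior cone does not produce a ball tangent to $\partial\Omega$ at $\xi$: any ball that fits inside the cone is at positive distance from $\xi$, so $\Phi_\xi(\xi)>0$, not $0$, and the barrier fails to pin down the boundary trace at $\xi$. The genuine Miller-type barrier for cone domains is not radial; it is a separated-variables construction of the form $r^{-\gamma}f(\theta)$ (or $r^{\gamma}f(\theta)$), with $f$ built from an ODE on the cone's angular cross-section, and verifying that the Pucci maximal operator acts correctly on it is precisely the nontrivial part. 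The paper bypasses all of this by invoking Lemma~3.1 of~\cite{CCKS}, which directly supplies a pair of strong Pucci sub/supersolutions in $W^{2,p}_{\mathrm{loc}}(\Omega)\cap C(\overline\Omega)$ equal to $\psi$ on $\partial\Omega$ under exactly the exterior cone hypothesis. As written, your argument only establishes the proposition for domains satisfying an exterior ball condition.
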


\begin{proof}
Uniqueness is corollary 5.4 in \cite{CafCab}. Let us recall how to obtain existence via Perron's Method, proposition II.1 in \cite{IL90} (see also \cite{Ishii89}). Surely, comparison principle holds for $F(D^2 u)$ by theorem 5.3 and corollary 3.7 in \cite{CafCab}. Further, we obtain a pair of strong sub and supersolutions $\underline{u},\, \overline{u} \in W^{2,p}_{\mathrm{loc}}(\Omega) \cap C(\overline{\Omega})$ of Pucci's equations $\mathcal{M}^+ (D^2 \overline{u}) \leq 0 \leq \mathcal{M}^- (D^2 \underline{u})$ in $\Omega$ with $\underline{u}= \overline{u}=\psi$ on $\partial\Omega$ by lemma 3.1 of \cite{CCKS}. They are $L^p$ (so $C$) viscosity
sub and supersolutions of $F(D^2 u)=0$.
\end{proof}

We use the following notation from \cite{MilSilv} and \cite{Winter},
$$B_r^\nu (x_0):=B_r(x_0)\cap \{x_n >-\nu\} ,\;\;\mathbb{T}^{\nu}_r(x_0) :=B_r (x_0)\cap \{ x_n =-\nu \},\textrm{ for } r>0 ,\,\nu >0,$$
simply $\mathbb{T}:=B_1 \cap\{x_n=0\}$ and $B_r^+:=B_r \cap \{ x_n >0 \}$, where $B_r=B_r(0)$.

\begin{prop}
 \label{C1,baralphaglobal}
Let $u\in C(\overline{B_1^\nu })$ be a $C$-viscosity solution of
\begin{align*}
\left\{
\begin{array}{rclcc}
F(D^2 u)& =& 0  &\mbox{in} & B_1^\nu \\
u &=&\psi  &\mbox{on} &\mathbb{T}_1^\nu
\end{array}
\right.
\end{align*}
such that $\psi\in C(\partial B_1^\nu)\cap C^{1,\tau}(\mathbb{T}_1^\nu)$ for some $\tau >0$. Then $u\in C^{1,\bar{\alpha}}( \overline{B_{1/2}^\nu})$, where $\bar{\alpha}=\min (\tau, \alpha_0)$ for a universal $\alpha_0$. Moreover, for a constant $K_3$, depending only on $n,\lambda,\Lambda$ and $\tau$, we have
\begin{align*}
\|u\|_{C^{1,\bar{\alpha}}( \overline{B_{1/2}^\nu})}\leq K_3 \, \{ \|u\|_{L^\infty (B_1^\nu)} + \|\psi\|_{C^{1,\tau} (\mathbb{T}_1^\nu )} \}.
\end{align*}
\end{prop}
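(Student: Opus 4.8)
The plan is to reduce the global (flat-boundary) statement to the interior result in Proposition \ref{C1,baralpha} together with a boundary regularity estimate for the model equation $F(D^2 u)=0$, exploiting that near an interior point of the flat piece $\mathbb{T}_1^\nu$ the situation is classical. First I would normalize: by considering $u - \ell$ where $\ell$ is the affine function agreeing with $\psi$ and $D_{x'}\psi$ at the base point (the center of $\mathbb{T}_1^\nu$), and using that $F$ is translation invariant in $X$ (so $F(D^2(u-\ell))=F(D^2u)=0$), we may assume $\psi(0)=0$ and $D_{x'}\psi(0)=0$, paying a harmless additive $\|\psi\|_{C^{1,\tau}}$ in the final constant; the $C^{1,\tau}$ norm of $\psi$ controls $|\ell|$ and its gradient. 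The core of the argument is then an iteration à la Caffarelli: on dyadic half-balls $B^\nu_{2^{-k}}(0)$ one compares $u$ with the solution $v_k$ of $F(D^2 v_k)=0$ in $B^\nu_{2^{-k}}(0)$ with $v_k=u$ on the spherical part and $v_k = $ the affine function (here $0$ after normalization) on the flat part, whose existence is guaranteed by Proposition \ref{ExisUnicF(D2u)} since a half-ball satisfies the uniform exterior cone condition.

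For the iteration to close one needs, at each scale, a Hölder-type decay of $v_k$ near the flat boundary, i.e. $\mathrm{osc}_{B^\nu_{r}} v_k \le C r^{\bar\alpha}\, \mathrm{osc}_{B^\nu_{2r}} v_k$ for the model equation with zero flat data. This is exactly the statement that for pure second-order $F$, $C$-viscosity solutions vanishing on a flat portion of the boundary are $C^{1,\alpha_0}$ up to that flat portion for a universal $\alpha_0$; the classical way to get it is a barrier/growth argument — use the boundary barrier $w(x) = A x_n - B|x'|^2 \pm $ (a controlled term) which is a supersolution of $\mathcal M^+(D^2 w)\le 0$ near $\mathbb{T}$, to obtain $|v_k(x)| \le C\, d(x,\mathbb{T})\,\|v_k\|_{L^\infty}$, and then combine this linear-in-distance bound with the interior estimate of Proposition \ref{C1,baralpha} applied on interior balls of radius comparable to $d(x,\mathbb{T})$ (a standard interior-vs-boundary covering), yielding the $C^{1,\alpha_0}$ gradient bound up to $\mathbb{T}_1^\nu$ for $v_k$ with a universal constant. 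Feeding the error $\|u-v_k\|_{L^\infty(B^\nu_{2^{-k}})}$, which is small because both solve the same equation with the same (normalized) boundary data up to oscillation, into the standard geometric-series argument produces affine approximations $\ell_k$ at the base point with $\|u-\ell_k\|_{L^\infty(B^\nu_{2^{-k}})} \le C\, 2^{-k(1+\bar\alpha)}(\|u\|_{L^\infty(B^\nu_1)}+\|\psi\|_{C^{1,\tau}(\mathbb{T}_1^\nu)})$, hence $u$ is differentiable at the base point with the stated modulus. Finally, one makes the estimate uniform over all points of $\overline{B^\nu_{1/2}}$: for interior points or points whose nearest boundary point is on the spherical part far from $\mathbb{T}$, use Proposition \ref{C1,baralpha} directly; for points near $\mathbb{T}_1^\nu$, translate the center of the dyadic iteration to the foot of the perpendicular from that point onto $\mathbb{T}_1^\nu$ and rerun the argument, combining the pointwise $C^{1,\bar\alpha}$ information at all base points with the interior bound to upgrade to a genuine $C^{1,\bar\alpha}$ norm bound on $\overline{B^\nu_{1/2}}$ via the standard characterization of $C^{1,\bar\alpha}$ by affine approximation.

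The main obstacle is the boundary decay estimate for the model equation $F(D^2v)=0$ with zero data on a flat piece — i.e. obtaining the universal $\alpha_0$ and the $C^{1,\alpha_0}$ bound up to $\mathbb{T}$ depending only on $n,\lambda,\Lambda$. Here one must be careful that $\psi$ is only assumed $C^{1,\tau}$, so after subtracting the affine part the residual flat data is $O(|x'|^{1+\tau})$ rather than identically zero; this forces the exponent to be $\bar\alpha=\min(\tau,\alpha_0)$ and requires adding to the barrier a term of order $|x'|^{1+\tau}$ (or iterating with affine-plus-lower-order corrections), which is the delicate bookkeeping in the geometric iteration. Everything else — existence of comparison solutions on half-balls (Proposition \ref{ExisUnicF(D2u)}), interior $C^{1,\bar\alpha}$ (Proposition \ref{C1,baralpha}), comparison principle for $F(D^2u)$, and the covering/translation arguments — is standard once this boundary growth lemma is in hand.
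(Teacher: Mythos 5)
The paper does not actually prove Proposition \ref{C1,baralphaglobal}: the text immediately following it simply cites proposition 2.2 of \cite{MilSilv} and remark 3.3 of \cite{Winter}. So there is no ``paper's proof'' to compare against; what can be assessed is whether your self-contained derivation would work.

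Your outline (affine normalization, dyadic iteration against solutions of the constant-coefficient model with affine flat data, geometric series) is the right shape, and you correctly identify that the key ingredient is a boundary $C^{1,\alpha_0}$ estimate, with universal $\alpha_0$, for $C$-viscosity solutions of $F(D^2v)=0$ vanishing on a flat boundary piece. However, the derivation you propose for this ingredient has a genuine gap. You claim it follows from (i) a barrier giving linear decay $|v(x)|\le C\,d(x,\mathbb{T})\,\|v\|_{L^\infty}$ and (ii) the interior estimate of Proposition \ref{C1,baralpha} on balls of radius comparable to $d(x,\mathbb{T})$. But those two facts only yield $\|Dv\|_{L^\infty}\le C$ and $[Dv]_{\alpha,B_{d/2}(x)}\le C\,d^{-\alpha}$, with the H\"older seminorm blowing up as $d\to0$. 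Linear decay of $v$ is strictly weaker than $C^{1,\alpha}$ up to the flat part, and the ``standard interior-vs-boundary covering'' does not bridge the gap: one can have $|v|\le C x_n$ with $Dv$ bounded yet not uniformly H\"older up to $\{x_n=0\}$. What is actually required is a Krylov-type boundary oscillation estimate for the ratio $v/x_n$ (equivalently, improving affine approximations $a_k x_n$ at geometric scales), or a tangential difference-quotient argument reducing to Krylov's boundary H\"older estimate for linear equations with bounded measurable coefficients. This requires the boundary Harnack inequality, which does not appear anywhere in your argument. It is precisely this lemma, not the iteration or the bookkeeping with the $O(|x'|^{1+\tau})$ residual data, that is the heart of Proposition \ref{C1,baralphaglobal}; the paper outsources it to \cite{MilSilv} and \cite{Winter}, where it is supplied by exactly such a Krylov/Harnack argument. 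As written, your proposal asserts the conclusion of that lemma without a correct proof of it.
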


For a proof of proposition \ref{C1,baralphaglobal} see proposition 2.2 in \cite{MilSilv}; see also remark 3.3 in \cite{Winter}.

\section{Proof of theorem \ref{C1,alpha regularity estimates geral}.}\label{proof main th}

\subsection{Local Regularity}\label{local regularity}

Fix a domain $\Omega^\prime\subset\subset\Omega$.
Consider $K_1$ and $\beta$ the pair given by the $C^\beta$ local superlinear estimate (proposition \ref{Cbetaquad}) for $\Omega^\prime $, related to the initial $n,p,\lambda, \Lambda,\mu,\|b\|_{L^p(\Omega)}$, $\mathrm{dist}(\Omega^\prime,\partial\Omega)$ and $C_0$ such that
$$\|u\|_{C^\beta(\Omega^\prime)}\leq K_1 \,\{\|u\|_{L^\infty (\Omega)} + \|f\|_{L^p(\Omega)}+\|d\|_{L^p(\Omega)}\,\omega(\|u\|_{L^\infty (\Omega)})\}.$$

Also, let $K_2$ (which we can suppose greater than 1) and $\bar{\alpha}$ be the constants of $C^{1,\bar{\alpha}}$ local estimate (proposition \ref{C1,baralpha}) associated to $n,\lambda,\Lambda$ in the ball $B_1(0)$. 

By taking $K_1$ larger and $\beta$ smaller, we can suppose $K_1\geq \widetilde{K}_1$ and $\beta\leq \widetilde{\beta}$, where $\widetilde{K}_1,\widetilde{\beta}$ is the pair of $C^\beta$ local estimate in the ball $B_1$ (or $B_{1/2}$), with respect to an equation with given constants $n,p,\lambda,\Lambda$ and bounds for the coefficients $\mu\leq 1$, $\|b\|_{L^p(B_2)}\leq 1+2K_2|B_1|^{1/p}$ and $\omega(1)\|d\|_{L^p(B_2)}\leq 1$, for all solutions in the ball $B_2$ with $\|u\|_{L^\infty(B_2)}\leq 1$ (or for all solutions in the ball $B_1$ with bounds on the coefficients in $B_1$).

\vspace{0.1cm}

The first step is to approximate our equation with one which already has the corresponding regularity and estimates that we are interested in. Denote $\|\cdot\|_{ p}=\|\cdot\|_{ L^p (B_1)}$.

\begin{lem}
\label{AproxLem}
Assume $F$ satisfies \ref{SCmu} in $B_1$, $f \in L^p (B_1)$, where $p>n$. Let $\psi \in C^\tau (\partial B_1)$ with $\|\psi\|_{C^\tau (\partial B_1)}\leq K_0$. 
Moreover, set $L(x)=A+B\cdot x$ in $B_1$, for $A\in \real$, $B\in \rn$.
Then, for every $\varepsilon>0$, there exists $\delta\in (0,1)$, $\delta=\delta (\varepsilon,n,p,\lambda,\Lambda,\tau,K_0)$, such that
\begin{align*}
\|{\bar{\beta}}_F(\cdot,0)\|_{ p}\, ,\;\|f\|_{ p}\, ,\; \mu(|B|^2 +|B|+1) \, ,\;
\|b\|_{p}(|B|+1) \, ,\;\omega (1) \|d\|_{ p}(|A|+|B|+1)\leq \delta
\end{align*}
imply that any two $L^p$-viscosity solutions $v$ and $h$ of
\begin{align*}
\left\{
\begin{array}{rclcc}
F(x,v+L(x),Dv+B,D^2v)&=& f(x)& \mbox{in} & B_1 \\
v &=& \psi \; & \mbox{on} & \partial B_1
\end{array}
\right., \;
\left\{
\begin{array}{rclcc}
F(0,0,0,D^2h)&=& 0 & \mbox{in} & B_1 \\
h &=&\psi & \textrm{on} & \partial B_1
\end{array}
\right.
\end{align*}
respectively, with $\omega (1) \|d\|_{ p}\|v\|_{\infty}\leq \delta$, satisfy $\|v-h\|_{L^\infty (B_1)}\leq \varepsilon$. 
\end{lem}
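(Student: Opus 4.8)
The plan is to argue by contradiction and compactness, which is the standard device for this kind of approximation lemma (as in \cite{Caf89}, \cite{Swiech}, \cite{Winter}), but carefully tracking the superlinear gradient term as in Wang's approach. Suppose the conclusion fails for some fixed $\varepsilon_0>0$. Then there exist sequences $F_k$ satisfying \ref{SCmu} with coefficients $b_k, d_k, \mu_k, \beta_{F_k}$, boundary data $\psi_k \in C^\tau(\partial B_1)$ with $\|\psi_k\|_{C^\tau(\partial B_1)} \le K_0$, affine functions $L_k(x) = A_k + B_k \cdot x$, right-hand sides $f_k \in L^p(B_1)$, and $L^p$-viscosity solution pairs $v_k$, $h_k$ of the respective Dirichlet problems, such that all five listed quantities are $\le 1/k$ (together with $\omega(1)\|d_k\|_p \|v_k\|_\infty \le 1/k$), yet $\|v_k - h_k\|_{L^\infty(B_1)} > \varepsilon_0$.

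First I would obtain uniform estimates and compactness for both sequences. For $h_k$: since $\|\psi_k\|_{C^\tau(\partial B_1)} \le K_0$, Proposition \ref{ExisUnicF(D2u)} gives existence, the maximum principle for $F(0,0,0,D^2 h)$ bounds $\|h_k\|_{L^\infty(B_1)}$ by $K_0$, and by Proposition \ref{C1,baralpha} applied on interior balls together with $C^{\bar\alpha}$ up-to-the-boundary regularity the $h_k$ are precompact in $C(\overline{B_1})$; extract $h_k \to h_\infty$ uniformly, where (after extracting a further subsequence so that the operators $F_k(0,0,0,\cdot)$ converge locally uniformly in $X$ to some uniformly elliptic $F_\infty(0,0,0,\cdot)$, and $\psi_k \to \psi_\infty$ in $C(\partial B_1)$ by Arzelà--Ascoli) $h_\infty$ solves $F_\infty(0,0,0,D^2 h_\infty) = 0$ in $B_1$ with $h_\infty = \psi_\infty$ on $\partial B_1$. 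For $v_k$: the listed smallness forces $\|b_k\|_p(|B_k|+1) \to 0$, $\mu_k(|B_k|^2+|B_k|+1)\to 0$, $\omega(1)\|d_k\|_p(|A_k|+|B_k|+1) \to 0$ and $\|f_k\|_p \to 0$; feeding the structure \ref{SCmu} for $v_k$ against the test function $L_k$, one checks that $v_k$ is an $L^p$-viscosity solution of extremal inequalities $\mathcal{L}^-[v_k] - \mu_k|Dv_k|^2 \le g_k$, $\mathcal{L}^+[v_k]+\mu_k|Dv_k|^2 \ge -g_k$ in $B_1$ with $\|g_k\|_p \to 0$ (absorbing the $L_k$-contributions and the zero-order term, using $\omega(r)\le \omega(1)r$). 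ABP (Proposition \ref{wABPquad}, whose smallness hypothesis on $\mu_k\|g_k\|_p$ is guaranteed) bounds $\|v_k\|_{L^\infty(B_1)}$ in terms of $\|\psi_k\|_\infty + \|g_k^\mp\|_p \le K_0 + o(1)$, and then the $C^\beta$ estimate (Proposition \ref{Cbetaquad}, final statement, for the inequalities, $\mu$-version) gives interior $C^\beta$ bounds; combined with a boundary Hölder estimate (uniform exterior cone for $B_1$) this makes $v_k$ precompact in $C(\overline{B_1})$, so $v_k \to v_\infty$ uniformly along a subsequence.

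Next I would pass to the limit in the equation for $v_k$ using the stability Proposition \ref{Lpquad}. The point is that for a fixed ball $B \subset\subset B_1$ and test $\varphi \in W^{2,p}(B)$, the quantity $F_k(x, v_k + L_k, D\varphi + B_k, D^2\varphi) - f_k(x)$ must be compared with $F_\infty(0,0,0,D^2\varphi)$; by \ref{SCmu} their difference is controlled by $\bar\beta_{F_k}(x,0)(\|D^2\varphi\|+1) + b_k|D\varphi + B_k| + \mu_k|D\varphi+B_k|(\cdots) + \omega(1)\|d\|(\cdots)$ plus $|f_k|$ plus the local-uniform convergence $F_k(0,0,0,\cdot)\to F_\infty(0,0,0,\cdot)$ on the compact range of $D^2\varphi$; all of these tend to $0$ in $L^p(B)$ by the smallness hypotheses and $v_k \to v_\infty$, $A_k+B_k\cdot x$ staying bounded on the relevant sublevel via the ABP bound — here I also invoke that $\|\bar\beta_{F_k}(\cdot,0)\|_p \to 0$, which is exactly why the $\bar\beta$ rather than $\beta$ version is used in this lemma. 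Hence $v_\infty$ is an $L^p$-viscosity solution of $F_\infty(0,0,0,D^2 v_\infty) = 0$ in $B_1$. Since $v_k = \psi_k \to \psi_\infty$ on $\partial B_1$ and both sequences converge uniformly on $\overline{B_1}$, we get $v_\infty = \psi_\infty = h_\infty$ on $\partial B_1$. By the comparison principle for $F_\infty(0,0,0,D^2\cdot)$ (Theorem 5.3 and Corollary 3.7 of \cite{CafCab}, as used in Proposition \ref{ExisUnicF(D2u)}) applied to the two $C$-viscosity solutions $v_\infty$ and $h_\infty$ with the same boundary data — noting both are in fact $C$-viscosity solutions since the limit operator is continuous in $x$ — we conclude $v_\infty \equiv h_\infty$ in $B_1$. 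But $\|v_\infty - h_\infty\|_{L^\infty(B_1)} = \lim_k \|v_k - h_k\|_{L^\infty(B_1)} \ge \varepsilon_0 > 0$, a contradiction, which proves the lemma.

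The main obstacle I expect is the bookkeeping in the passage to the limit: ensuring that the affine perturbation $L_k$ — whose coefficients $A_k, B_k$ are \emph{not} assumed small, only the products $\|b_k\|_p|B_k|$, $\mu_k|B_k|^2$, $\omega(1)\|d_k\|_p|A_k|$ etc.\ — does not spoil the uniform ABP/$C^\beta$ bounds nor the $L^p$-convergence of the $g_k$'s, and that the superlinear term $\mu_k|Dv_k|^2$ is genuinely absorbable (this is where Wang's idea enters, converting the quadratic gradient term into a controlled right-hand side via the smallness of $\mu_k$ against the a priori sup-bound). The rest is routine compactness. One should also be slightly careful that the $C^\beta$ estimate is applied in the form stated for the extremal \emph{inequalities} with the $\mu$-term, and that its constant stays bounded because $\|v_k\|_{L^\infty}$ stays bounded, which is why the hypothesis $\omega(1)\|d\|_p\|v\|_\infty \le \delta$ (equivalently a bound on $\|v_k\|_\infty$ entering the constants) is listed.
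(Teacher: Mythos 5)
Your proof proposal follows essentially the same route as the paper: contradiction, uniform $L^\infty$ bounds via ABP for both $v_k$ and $h_k$, uniform $C^\beta$ bounds giving compactness in $C(\overline{B_1})$, Arzelà--Ascoli for $F_k(0,0,0,\cdot)\to F_\infty$, passing to the limit in the $v_k$-equation via the stability Proposition~\ref{Lpquad}, and then invoking uniqueness for $F_\infty(D^2u)=0$ (Proposition~\ref{ExisUnicF(D2u)}) to contradict $\|v_k-h_k\|_\infty>\varepsilon_0$. The one small variance is cosmetic: where you propose to patch interior $C^{1,\bar\alpha}$ regularity (Proposition~\ref{C1,baralpha}) with boundary Hölder regularity to compactify $h_k$, the paper just applies the global $C^\beta$ estimate (Proposition~\ref{Cbetaquad}) uniformly to both $v_k$ and $h_k$ — slightly cleaner but equivalent. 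Also note that inside Lemma~\ref{AproxLem} the modulus $\omega$ is only assumed subadditive (so $\omega(k)\le(k+1)\omega(1)$), not Lipschitz ($\omega(r)\le\omega(1)r$); your invocation of the Lipschitz bound is not needed there, the subadditive bound is what the paper uses and is what makes the hypothesis $\omega(1)\|d\|_p\|v\|_\infty\le\delta$ break the circularity you flag between the ABP bound and the RHS of the extremal inequalities. With that substitution the argument is correct and matches the paper's.
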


\begin{proof}
In this proof denote $\alpha_n$ as the measure of the ball $B_1(0)$.
We are going to prove that for all $\varepsilon>0$, there exists a $\delta\in (0,1)$ satisfying the above, with $\delta\leq 2^{-\frac{n}{2p}}C_n^{-\frac{1}{2}}\,\widetilde{\delta}^{1/2}$, where $\widetilde{\delta}$ is the constant from proposition \ref{wABPquad} for $\tilde{b}=b+2|B|\mu$, $C_n=4+2\alpha_n^{{1}/{p}}$. 
Assume the conclusion is not satisfied, then there exist some $\varepsilon_0>0$ and a sequence of operators $F_k$ satisfying $(SC)^{\mu_k}$ for $b_k,\, d_k\in L^p_+(B_1)$, $\mu_k\geq 0$, $\omega_k$ modulus, also $f_k\in L^p(B_1)$, $A_k\in \real$, $B_k\in \rn$, $L_k(x)=A_k+B_k\cdot x$, and $\delta_k\in(0,1)$ such that
$
\delta_k \leq 2^{-\frac{n}{2p}}C_n^{-\frac{1}{2}}  \,\widetilde{\delta}_k^{1/2}$ for all $k\in\n$,
where $\widetilde{\delta}_k$ is the number from ABP related to $\tilde{b}_k=b_k+2|B_k|\mu_k$, in addition to
\begin{align*}
\|{\bar{\beta}}_{F_k}(\cdot,0)\|_{ p},\; \|f_k\|_{p} ,\; \mu_k (|B_k|^2+|B_k|+1),\;
\|b_k\|_{ p}(|B_k|+1),\; \omega_{k}(1) \|d_k\|_{ p}(|A_k|+|B_k|+1)\leq \delta_k 
\end{align*}
with $\delta_k\rightarrow 0$, and $v_k$, $h_k\in C(\overline{B}_1)$ $L^p$-viscosity solutions of
\begin{align*}
\left\{
\begin{array}{rclc}
F_k(x,v_k+L_k(x),Dv_k+B_k,D^2v_k)&=&f_k(x) & B_1 \\
v_k &=& \psi_k  & \partial B_1
\end{array}
\right.
, \;
\left\{
\begin{array}{rclc}
F_k(0,0,0,D^2h_k)&=& 0 &  B_1 \\
h_k &=& \psi_k & \partial B_1
\end{array}
\right.
\end{align*}
where $\|\psi_k\|_{C^\tau (\partial B_1)}\leq K_0$, $\omega_k (1) \|d_k\|_{ p}\|v_k\|_{\infty}\leq \delta_k$, but
$\|v_k-h_k\|_{L^\infty (B_1)} >\varepsilon_0.$
We first claim that
\begin{align} \label{v,hbound}
\|v_k\|_{L^\infty (B_1)}\,, \;\|h_k\|_{L^\infty (B_1)}\leq C_0
\end{align}
for large $k$, where $C_0=C_0(n,p,\lambda,\Lambda, K_0)$.
Indeed, in the first place, since we have $\mathcal{M}^-(D^2h_k) \leq 0\leq \mathcal{M}^+(D^2h_k)$ in the viscosity sense, we obtain directly that
$
\|h_k\|_{L^\infty (B_1)}\leq \|\psi_k\|_{L^\infty (\partial B_1)}\leq K_0.
$
For $v_k$, we initially observe that
$$
2^{\frac{n}{p}} C_n\,  \mu_k \, \delta_k \leq 2^{\frac{n}{p}} C_n \,\delta_k^2\leq  \widetilde{\delta}_k\, , \;\;\; \textrm{for all } k\in\n.
$$
Further, $v_k$ is an $L^p$-viscosity solution of
$$\mathcal{{L}}_k^+[v_k]+\mu_k|D v_k|^2 +d_k(x) \omega_k (|v_k+L_k|)+g_k \geq f_k \geq \mathcal{{L}}_k^-[v_k]-\mu_k |Dv_k|^2-d_k(x)\omega_k(|v_k+L_k|)-g_k.$$
Here $\mathcal{{L}}_k^\pm[w]=\mathcal{M}^\pm(D^2w)\pm \widetilde{b}_k|Dw|$, and $g_k(x)=b_k(x)|B_k|+\mu_k |B_k|^2$. 
Then, applying ABP in its quadratic form in $B_1(0)$, with RHS  $d_k \,\omega_k (|v_k+L_k|)+g_k+|f_k|$, we obtain that
\begin{align*}
\|v_k\|_{L^\infty(B_1)} &\leq\| v_k\|_{L^\infty(\partial B_1)} +C_A^k  \, \{\|{f}_k\|_{p}+\|{g}_k\|_{p} +\|d_k\|_{p}\,\omega_k(1) (|A_k|+|B_k|+\|v_k\|_{L^\infty (B_1)}+1) \}.
\end{align*}
Since $\|\tilde{b}_k\|_{L^n(B_1)}\leq \alpha_n^{\frac{p-n}{np}}$ for large $k$, then the constant in ABP is uniformly bounded, say $C_A^k\leq C_A$. Using the assumptions and $C_A\,\omega_k(1)\|d_k\|_{p}\leq 1/2$ for large $k$ as in \cite{Winter}, we obtain that $\|v_k\|_{L^\infty(B_1)} \leq C_0 $, with $C_0=C_0(n,p,\lambda,\Lambda, K_0)$, proving the claim \eqref{v,hbound}.

Then, by the $C^\beta$ global estimate (proposition \ref{Cbetaquad}), there exists $\beta\in (0,1)$ such that
\begin{align*}
\|v_k\|_{C^\beta (\overline{B}_1)}\,, \;\|h_k\|_{C^\beta (\overline{B}_1)}\leq C ,
\end{align*}
where $\beta=\min {(\beta_0,\frac{\tau}{2})}$ for some $\beta_0=\beta_0 (n,p, \lambda, \Lambda)$, $C=C(n,p, \lambda, \Lambda, C_0)$. Here, $\beta$ and $C$ do not depend on $k$, since $\mu_k, \, \|\widetilde{b}_k\|_{L^p(B_1)}, \, \omega_k (1)\|d_k\|_{L^p(B_1)}, \, \|f_k\|_{L^p(B_1)}\leq 1$ for large $ k$.
Then, by the compact inclusion $C^\beta(\overline{B}_1) \subset C(\overline{B}_1)$ we have, up to subsequences, that
$$
v_k\longrightarrow v_\infty\,, \;\;\;  h_k\longrightarrow h_\infty\;\;\; \textrm{in}\; C(\overline{B}_1)\;\;\;\; \textrm{as}\;{k\rightarrow\infty},
$$
for some $v_\infty, \; h_\infty \in C(\overline{B}_1)$ with $v_\infty = h_\infty = \psi_\infty$ on $\partial B_1$. Moreover, by Arzelà-Ascoli theorem, a subsequence of $F_k(0,0,0,X)$ converges uniformly on compact sets of $\mathbb{S}^n$ to some uniformly elliptic operator $F_\infty(X)$, since $ \mathcal{M}^-_{\lambda,\Lambda}(X-Y) \leq F_k(0,0,0,X)-F_k(0,0,0,Y) \leq \mathcal{M}^+_{\lambda,\Lambda}(X-Y)$.

We claim that both $v_\infty$ and $h_\infty$ are viscosity solutions of
\begin{align*}
\left\{
\begin{array}{rclcc}
F_\infty (D^2 u)&=&0 & \mbox{in} & B_1 \\
u&=&\psi_\infty  & \mbox{on} & \partial B_1 \,.
\end{array}
\right.
\end{align*}
This implies that they are equal, by proposition \ref{ExisUnicF(D2u)},
which contradicts $\|v_\infty-h_\infty\|_{L^\infty (B_1)}\geq \varepsilon_0$.

The claim for $h_\infty$ follows by taking the uniform limits at the equation satisfied by $h_k$. On the other hand, for $v_\infty$ we apply stability (proposition \ref{Lpquad}) by noticing that, for $\varphi\in C^2(B_1)$,
\begin{align*}
F_k(x,v_k+L_k,D\varphi+B_k, D^2 \varphi)-f_k(x)-F_\infty (D^2 \varphi)= \{ F_k(x,v_k+L_k,D\varphi+B_k, D^2 \varphi)-\\ F_k(x,0,0, D^2 \varphi) \} 
+\{ F_k(x,0,0, D^2 \varphi)- F_k(0,0,0, D^2 \varphi) \}
+ \{ F_k(0,0,0, D^2 \varphi) - F_\infty (D^2 \varphi) \} -f_k(x)
\end{align*}
and that each one of the addends in braces tends to zero in $L^p$ as $k\rightarrow\infty$. Indeed, the first one in modulus is less or equal than
$\mu_k (|D\varphi |^2+2B_k|D\varphi|+|B_k|^2)+ b_k(x)( |D\varphi|+|B_k|)+\omega_k (\|v_k\|_{\infty } +|A_k|+|B_k| ) \, d_k(x)$, so its $L^p$-norm is bounded by $\mu_k \alpha_n^{1/p} \|D\varphi\|_{\infty}^2 + \| b_k\|_{p} \|D\varphi\|_{\infty} + (C_0+1)\,\omega_{k}(1) \, \| d_k\|_{p} +C_n\delta_k$; while the $L^p$-norm of the second and third are bounded by $\|{\bar{\beta}}_{F_k}(\cdot,0)\|_{p} (\|D^2 \varphi \|_{\infty} +1)$ and $\alpha_n^{{1}/{p}} \| F_k(0,0,0, D^2 \varphi) - F_\infty (D^2 \varphi) \|_{L^\infty (B_1)}$ respectively, what concludes the proof.
\end{proof}

\begin{proof}[\textit{Proof of Local Regularity Estimates in the set $\Omega^\prime$.}] The main difference from the case $\mu= 0$, in the present proof, consists of defining a slightly different scaling on the function, which allows us to have $\mu$ small in order to obtain the conditions of the approximation lemma \ref{AproxLem}. For this, we will bring forward an argument due to Wang \cite{W2}, that uses the $C^\beta$ regularity of $u$.

Set $W:=\| u \|_{L^{\infty} (\Omega)} + \|f \|_{L^p (\Omega)}+\|d\|_{L^p(\Omega)}\,\omega(\|u\|_{L^\infty (\Omega)})$, which is less or equal than $W_0$, a constant that depends on $C_0$ and $\omega (1)\|d\|_{L^p(\Omega)}$.

Suppose, to simplify the notation, that $0\in \Omega^\prime$ and set $s_0:=\min (r_0, \mathrm{dist}(0,\partial\Omega^\prime))$. Recall that this $r_0 = r_0 (\theta)$ is such that \eqref{Htheta} holds for all $r\leq \min \{ r_0, \mathrm{dist} (x_0,\partial\Omega)\}$, for all $x_0 \in \Omega$. We will see, in the sequel, how the choice of $\theta$ is done.

We start assigning some constants. Fix an $\alpha\in(0,\bar{\alpha})$ with $\alpha\leq\min (\beta,1- {n}/{p} )$. Then, choose $\gamma=\gamma(\alpha,\bar{\alpha},K_2)\in (0,\frac{1}{4}]$ such that
\begin{align} \label{gamma}
2^{2+\bar{\alpha}}K_2\, \gamma^{\bar{\alpha}}\leq \gamma^{\alpha}
\end{align}
and define
\begin{align} \label{epsilon}
\varepsilon=\varepsilon (\gamma):=K_2\, (2\gamma)^{1+\bar{\alpha}}.
\end{align}
This $\varepsilon$ provides a $\delta=\delta(\varepsilon)\in (0,1)$, the constant of the approximation lemma \ref{AproxLem} that, up to diminishing, can be supposed to satisfy
\begin{align} \label{delta}
(5+2K_2)\,\delta \leq \gamma^{\alpha}.
\end{align}
Now let $\sigma=\sigma (s_0,n,p,\alpha , \bar{\alpha},\beta, \delta,\mu ,\|b\|_{L^p (\Omega)},\omega(1)\|d\|_{L^\infty (\Omega)},K_1,K_2,C_0) \leq \frac{s_0}{2}$ such that
\begin{align} \label{sigma}
\sigma^{\min{({1-\frac{n}{p}},\beta)}} m \leq {\delta} \,\{ {32K^2(K_2+K+1)|B_1|^{1/p}}\}^{-1}
\end{align}
where $m:=\max{ \{ 1, \|{b}\|_{L^p (\Omega)}, \omega(1)\|{d}\|_{L^\infty (\Omega)},\mu (1+2^\beta K_1) W_0 \} }$. Consider the constant $K (\gamma, \alpha, K_2)$ defined as $K={K_2}\,{{\gamma^{-\alpha}}(1-\gamma^\alpha)^{-1}} +{K_2}\,{{\gamma^{-1-\alpha}}(1-\gamma^{1+\alpha})^{-1}} $ which is greater than $K_2\geq 1$. Hence, in particular, $\overline{B}_{2\sigma} (0)\subset \Omega^\prime$ and we can define
\begin{align*}
N=N_\sigma (0):= \sigma W + \sup_{x\in B_2} |u(\sigma x) - u(0)|.
\end{align*}
By construction and $C^\beta$ local quadratic estimate, $N$ is uniformly bounded by
\begin{align}\label{N}
\sigma W \leq N \leq (\sigma + 2^\beta K_1 \sigma ^\beta)W
\leq (1+2^\beta K_1) W_0\,\sigma ^\beta .
\end{align}

\vspace{1.1mm}
\begin{claim}\label{claim C1,alpha local 1a mud.var.} 
$\widetilde{u}(x):=\frac{1}{N} \{ u(\sigma x)-u(0) \}$
is an $L^p$-viscosity solution of $\widetilde{F}[\,\widetilde{u}\,]=\widetilde{f}(x)$ in $B_2$, where
$$\widetilde{F}(x,r,p,X):= \frac{\sigma^2}{N} F\left( \sigma x,Nr+u(0),\frac{N}{\sigma}p, \frac{N}{\sigma^2} X \right)-\frac{\sigma^2}{N} F\left( \sigma x,u(0),0, 0 \right)$$
and $\widetilde{f}:=\widetilde{f}_1+\widetilde{f}_2$ for
\begin{align*}
\widetilde{f}_1(x):={\sigma^2} f(\sigma x)/N, \quad
\widetilde{f}_2(x):= -
{\sigma^2} F\left( \sigma x,u(0),0, 0 \right)/N,
\end{align*}
with $\widetilde{F}$ satisfying $(\widetilde{SC})^{\widetilde{\mu}}$ for $\widetilde{b}(x):= \sigma b(\sigma x)$, $\widetilde{\mu}:=N\mu$,
$\widetilde{d}(x):= \sigma^2 d(\sigma x)$ and $\widetilde{\omega}(r):=\omega(Nr)/N $.
\end{claim}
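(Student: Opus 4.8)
The plan is to establish the two assertions of the claim --- that $\widetilde{F}$ obeys $(\widetilde{SC})^{\widetilde{\mu}}$ and that $\widetilde{u}$ is an $L^p$-viscosity solution of $\widetilde{F}[\widetilde{u}]=\widetilde{f}$ in $B_2$ --- by a direct change of variables, treating the structure condition and the viscosity property in turn. Throughout, the relevant geometric fact is that $\sigma\leq s_0/2$ and $\overline{B}_{2\sigma}(0)\subset\Omega^\prime$, so $\sigma x\in\Omega$ for every $x\in B_2$ and the original equation is available at all the rescaled points; in particular $\widetilde{u}\in C(\overline{B}_2)$ since $u\in C(\Omega)$.

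\emph{Structure condition.} I would start from the definition of $\widetilde{F}$ and observe that in $\widetilde{F}(x,r,p,X)-\widetilde{F}(x,s,q,Y)$ the additive terms $\tfrac{\sigma^2}{N}F(\sigma x,u(0),0,0)$ cancel, leaving $\tfrac{\sigma^2}{N}\big[F(\sigma x,Nr+u(0),\tfrac{N}{\sigma}p,\tfrac{N}{\sigma^2}X)-F(\sigma x,Ns+u(0),\tfrac{N}{\sigma}q,\tfrac{N}{\sigma^2}Y)\big]$. Applying \ref{SCmu} at the point $\sigma x\in\Omega$ and using that the Pucci operators are positively $1$-homogeneous, so that $\tfrac{\sigma^2}{N}\mathcal{M}^{\pm}_{\lambda,\Lambda}\big(\tfrac{N}{\sigma^2}(X-Y)\big)=\mathcal{M}^{\pm}_{\lambda,\Lambda}(X-Y)$, I would then simply track the surviving factors: $\tfrac{\sigma^2}{N}\,b(\sigma x)\,\tfrac{N}{\sigma}|p-q|=\sigma b(\sigma x)|p-q|$, $\tfrac{\sigma^2}{N}\,\mu\,\tfrac{N^2}{\sigma^2}|p-q|(|p|+|q|)=N\mu\,|p-q|(|p|+|q|)$, and $\tfrac{\sigma^2}{N}\,d(\sigma x)\,\omega(N|r-s|)=\sigma^2 d(\sigma x)\,\tfrac{\omega(N|r-s|)}{N}$. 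These are exactly the terms of $(\widetilde{SC})^{\widetilde{\mu}}$ with $\widetilde{b}(x)=\sigma b(\sigma x)\in L^p_+(B_2)$, $\widetilde{\mu}=N\mu$, $\widetilde{d}(x)=\sigma^2 d(\sigma x)\in L^\infty_+(B_2)$, $\widetilde{\omega}(r)=\omega(Nr)/N$, which is again a modulus (continuous and vanishing at $0$, and monotone/subadditive when $\omega$ is). I would also record that $\widetilde{F}(x,0,0,0)=0$ by construction, and that $\widetilde{f}=\widetilde{f}_1+\widetilde{f}_2\in L^p(B_2)$: indeed $\widetilde{f}_1\in L^p(B_2)$ since $f\in L^p$, while $|\widetilde{f}_2(x)|=\tfrac{\sigma^2}{N}|F(\sigma x,u(0),0,0)-F(\sigma x,0,0,0)|\leq\tfrac{\sigma^2}{N}d(\sigma x)\,\omega(|u(0)|)$ by \ref{SCmu}, hence bounded as $d\in L^\infty$.

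\emph{Viscosity property.} Fix $\phi\in W^{2,p}_{\mathrm{loc}}(B_2)$, $\varepsilon>0$ and an open $\mathcal{O}\subset B_2$ with $\widetilde{F}(x,\widetilde{u}(x),D\phi(x),D^2\phi(x))-\widetilde{f}(x)\leq-\varepsilon$ a.e.\ in $\mathcal{O}$. Set $\Phi(y):=N\phi(y/\sigma)+u(0)$ on $\sigma\mathcal{O}:=\{\sigma x:x\in\mathcal{O}\}\subset B_{2\sigma}(0)\subset\Omega$; then $\Phi\in W^{2,p}_{\mathrm{loc}}(\sigma\mathcal{O})$ with $D\Phi(y)=\tfrac{N}{\sigma}D\phi(y/\sigma)$, $D^2\Phi(y)=\tfrac{N}{\sigma^2}D^2\phi(y/\sigma)$. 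Unwinding the definitions of $\widetilde{F}$ and $\widetilde{f}$, and using $u(y)=N\widetilde{u}(y/\sigma)+u(0)$ together with $f(\sigma x)-F(\sigma x,u(0),0,0)=\tfrac{N}{\sigma^2}\widetilde{f}(x)$, one gets, for a.e.\ $y\in\sigma\mathcal{O}$ with $x=y/\sigma$,
\[
F(y,u(y),D\Phi(y),D^2\Phi(y))-f(y)=\tfrac{N}{\sigma^2}\big[\widetilde{F}(x,\widetilde{u}(x),D\phi(x),D^2\phi(x))-\widetilde{f}(x)\big]\leq-\tfrac{N}{\sigma^2}\,\varepsilon .
\]
Since $N/\sigma^2>0$ and $u$ is an $L^p$-viscosity subsolution of $F=f$ in $\Omega\supset\sigma\mathcal{O}$, $u-\Phi$ cannot have a local maximum in $\sigma\mathcal{O}$; but $(u-\Phi)(y)=N(\widetilde{u}-\phi)(y/\sigma)$ with $N>0$, so $\widetilde{u}-\phi$ has no local maximum in $\mathcal{O}$. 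The supersolution case is identical with reversed inequalities, which proves the claim.

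\emph{Main obstacle.} There is no substantive difficulty here --- the claim is essentially the bookkeeping of a parabolic-type rescaling. The only two points meriting attention are (i) making sure each rescaled test set $\sigma\mathcal{O}$ stays inside $\Omega$, which is exactly why $\sigma\leq s_0/2$ and $\overline{B}_{2\sigma}(0)\subset\Omega^\prime$ were arranged beforehand, and (ii) respecting the precise $L^p$-viscosity formalism, namely test functions in $W^{2,p}_{\mathrm{loc}}$, inequalities holding almost everywhere, and the defect $\varepsilon$ rescaling by the strictly positive factor $N/\sigma^2$, which is what keeps the sub/supersolution structure intact.
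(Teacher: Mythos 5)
Your proof is correct and follows essentially the same change-of-variables computation as the paper: the structure condition is verified by cancelling the additive term and tracking the scaling factors, and the viscosity property is transferred via the rescaled test function $\Phi(y)=N\phi(y/\sigma)+u(0)$ with the defect scaling by $N/\sigma^2>0$. The only cosmetic difference is that you work directly from the $\varepsilon$--$\mathcal{O}$ form of Definition \ref{def Lp-viscosity sol}, whereas the paper phrases the same step starting from a putative local min/max of $\widetilde{u}-\widetilde{\varphi}$; the underlying argument is identical.
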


\begin{proof}
Let $\varepsilon >0$ and $\widetilde{\varphi}\in W^{2,p}_{\textrm{loc}}(B_2)$ such that $\widetilde{u}-\widetilde{\varphi}$ has a minimum (maximum) at $x_0\in B_2$.
Define $\varphi (x):={N} \widetilde{\varphi} (x/\sigma)+u(0)$ in $B_{2\sigma} (0)$ and notice that $u-\varphi$ has a minimum (maximum) at $\sigma x_0 \in B_{2\sigma}$. Since $u$ is an $L^p$-viscosity solution on $B_{2\sigma}$, for this $\varepsilon >0$ there exists $r>0$ such that
\begin{align*}
F(\sigma x, u(\sigma x), D\varphi (\sigma x),D^2\varphi (\sigma x)) \leq (\geq)\, f(\sigma x)+(-)\,{N\varepsilon}/{\sigma^2} \quad \textrm{a.e. in }B_{r}(x_0),
\end{align*}
which is equivalent to
\begin{align*}
\frac{\sigma^2}{N}&F\left(\sigma x, N \widetilde{u}(x)+u(0),\frac{N}{\sigma}D \widetilde{\varphi}(x),\frac{N}{\sigma^2}D^2\widetilde{\varphi} (x)\right) \leq (\geq)\,\frac{\sigma^2}{N} f(\sigma x)+ (-)\,\varepsilon \quad \textrm{a.e. in }B_{r}(x_0).
\end{align*}
Adding $-{\sigma^2} F\left( \sigma x,u(0),0, 0 \right)/N$ in both sides, we have
\begin{align*}
\widetilde{F}(x, \widetilde{u}(x),D \widetilde{\varphi}(x),D^2\widetilde{\varphi}(x)) \leq (\geq)\, \widetilde{f}( x)+ (-)\,\varepsilon \quad \textrm{a.e. in }B_{r}(x_0).
\end{align*}
Furthermore, $\widetilde{F}(x,0,0,0)= 0\,$ for $x\in B_2$ and for all $r\in \real$, $p\in \rn$, $X\in \mathbb{S}^n$, we have
\begin{align*}
\widetilde{F} &(x,r,p,X)-\widetilde{F}(x,s,q,Y)\\
&= \frac{\sigma^2}{N} \left\{ F\left( \sigma x, Nr+u(0),\frac{N}{\sigma}p,\frac{N}{\sigma^2} X\right)-  F\left( \sigma x, Ns+u(0),\frac{N}{\sigma}q,\frac{N}{\sigma^2} Y\right)  \right\}  \\
&\leq \mathcal{M}^+_{\lambda,\Lambda} (X-Y)+\sigma b(\sigma x) \,|p-q|+N\mu |p-q|(|p|+|q|)+\sigma^2 d(\sigma x) \,{\omega(N|r-s|)}/{N}\\
&=\mathcal{M}^+_{\lambda,\Lambda}(X-Y)+\widetilde{b}(x)|p-q|+\widetilde{\mu} |p-q|(|p|+|q|) +\widetilde{d}(x)\,{\widetilde{\omega} }(|r-s|).
\end{align*}
The estimate from below in $(\widetilde{SC})^{\widetilde{\mu}}$ is analogous.
\qedhere{\textit{Claim \ref{claim C1,alpha local 1a mud.var.}.}}
\end{proof}

Notice that, with this definition and the choice of $\sigma$ in \eqref{sigma}, we have
\begin{itemize}
\item $\|\widetilde{u}\|_{L^\infty (B_2)} \leq 1$ since $N\geq \sup_{B_2} |u(\sigma x)-u(0)|$;

\item $\|\widetilde{f}_1\|_{L^p (B_2)}=\frac{\sigma^{2-\frac{n}{p}}}{N}\|{f}\|_{L^p (B_{2\sigma})}\leq \sigma^{1-\frac{n}{p}} \frac{\|{f}\|_{L^p (\Omega)}}{W} \leq \frac{\delta}{16}$;

\item $\|\widetilde{f}_2\|_{L^p (B_2)}\leq \frac{\sigma^{2-\frac{n}{p}}}{N} \omega (|u(0)|) \,\|d\|_{L^p (B_{2\sigma})}$ $\leq \sigma^{1-\frac{n}{p}}\frac {\omega (\|u\|_\infty)  \|d\|_{L^p (\Omega)} }{W}  \leq  \frac{\delta}{16}$\,; thus $\|\widetilde{f}\|_{L^p (B_2)}\leq \frac{\delta}{8}$;

\item $\widetilde{\mu}=N\mu\leq (1+2^\beta K_1)W_0 \, \mu \, \sigma^\beta \leq \frac{\delta}{8K^2|B_1|^{1/p} }$;

\item $\|\widetilde{b}\|_{L^p (B_2)}= \sigma^{1-\frac{n}{p}} \|{b}\|_{L^p (B_{2\sigma})} \leq \frac{\delta}{16K} $;

\item $\widetilde{\omega}(1) \|\widetilde{d}\|_{L^p (B_2)}={\sigma^{2-\frac{n}{p}}} \frac{\omega(N)}{N} \|{d}\|_{L^p (B_{2\sigma})}\leq  \sigma^{2-\frac{n}{p}}  \omega(1) \|{d}\|_{L^p (\Omega)} \leq \frac{\delta}{32(K_2+K+1)}$ from the hypothesis $\omega(r)\leq \omega (1)r$ for all $r\geq 0$;

\item $\|{\bar{\beta}}_{\widetilde{F}}(\cdot,0)\|_{L^p (B_1)} \leq \delta/4 $, by choosing $\theta=\delta/8$. Indeed,
\end{itemize}
\begin{align}\label{mudandobetabar}
\bar{\beta}_{\widetilde{F}}(x,x_0)&\leq \frac{\sigma^2}{N}\sup_{X\in \mathbb{S}^n} \frac{| F(\sigma x,u(0),0,\frac{N}{\sigma^2}X)-F(\sigma x,0,0,\frac{N}{\sigma^2}X)|}{\|X\|+1} \nonumber\\
&\quad +\sup _{X\in \mathbb{S}^n} \frac{| F(\sigma x,0,0,\frac{N}{\sigma^2}X)-F(\sigma x_0,0,0,\frac{N}{\sigma^2}X)|}{\frac{N}{\sigma^2}(\|X\|+1)} \nonumber\\
&\quad +\frac{\sigma^2}{N}\sup_{X\in \mathbb{S}^n} \frac{| F(\sigma x_0,0,0,\frac{N}{\sigma^2}X)-F(\sigma x_0,u(0),0,\frac{N}{\sigma^2}X)|}{\|X\|+1} \nonumber\\
&\quad + \frac{\sigma^2}{N}\sup_{X\in \mathbb{S}^n} \frac{| F(\sigma x,u(0),0,0)|+|F(\sigma x_0,u(0),0,0)|}{\|X\|+1}\nonumber\\
&\leq \frac{2\sigma^2}{N} \{d(\sigma x)+d(\sigma x_0)\} \,\omega(|u(0)|)\sup_{X\in \mathbb{S}^n} ({\|X\|+1} )^{-1}+\beta_F(\sigma x,\sigma x_0)
\end{align}
and therefore,
\begin{align*}
\|\bar{\beta}_{\widetilde{F}}(\cdot,0)\|_{L^p (B_1)}&\leq 4 \sigma |B_1|^{\frac{
1}{p}}\, \frac{\omega (\|u\|_{L^\infty(\Omega)})\|d\|_{L^\infty(\Omega)}}{W} + \left( \frac{1}{\sigma^n} \int_{B_\sigma(0)} \beta_F(y,0)^p \mathrm{d}y \right)^{\frac{1}{p}}\\
&\leq \delta/8 +\theta\,=\, \delta/4.
\end{align*}

In particular $\widetilde{F}, \widetilde{u}, \widetilde{\mu}, \widetilde{b}, \widetilde{d}, \widetilde{\omega}, A=0, B=0$ satisfy lemma \ref{AproxLem} hypotheses. Further, if we show $\|\widetilde{u}\|_{C^{1,\alpha}(\overline{B}_1)} \leq C$, we will obtain
$
\| {u(\sigma x)-u(0)} \|_{C^{1,\alpha}(\overline{B}_1)} \leq CN \leq (1+2^\beta K_1)C W
$
by \eqref{N}, then
$$
\| {u} \|_{C^{1,\alpha}(\overline{B}_\sigma)}\leq C\, \{ \| u \|_{L^{\infty} (\Omega)} + \|f \|_{L^p (\Omega)}\}  ,
$$
where the constant depends on $\sigma$; the local estimate following by a covering argument.

\begin{rmk}\label{Remark qualquer modulo} In the case we have an arbitrary modulus of continuity, we define
$
N= \sigma \max \{W,1\} \\ + \sup_{x\in B_2} |u(\sigma x) - u(0)|,
$
which by construction and $C^\beta$ local superlinear estimate,
$$
\sigma \leq N \leq (\sigma + 2^\beta K_1 \sigma ^\beta)\max\{W,1\}
\leq (1+2^\beta K_1) W_0\,\sigma ^\beta \leq 1.
$$
Then we have
$\widetilde{\omega}(1) \|\widetilde{d}\|_{L^p (B_2)}=\frac{\sigma^{2-\frac{n}{p}}}{N}  \omega(N) \|{d}\|_{L^p (B_{2\sigma})}\leq  \sigma^{1-\frac{n}{p}}  \omega(1) \|{d}\|_{L^p (\Omega)} $.

Moreover, we can consider the smallness assumption in terms of $(\overline{H})_\theta$, with $\bar{\beta}$ instead of $\beta$ in \eqref{mudandobetabar}. In fact, in this case we use $N/{\sigma^2}\geq 1$. In the end, we obtain that the original function $u$ is such that $\|u\|_{C^{1,\alpha}(\overline{\Omega})}$ is bounded by $C\max\{W,1\}\leq C (W+1)$, in place of $CW$.
\end{rmk}

Notice that the only place we had to use the dependence on the bound $C_0$ is to measure the smallness of $\mu$. Thus, if $\mu=0$, the final constant does not depend on $W_0$, neither on $C_0$.
\begin{rmk}\label{remark mu=0}
Still for $\mu=0$, if we split our analysis in two cases $($as usual for linear growth in the gradient, see for instance \cite{Bbook}$\,)$, then we can obtain the conditions in terms of $(\overline{H}_\theta)$. Indeed, set $N:=W$. If $N\leq 1$, we just define $\widetilde{u}=u (\sigma x)$ and use that each of the addends in $W$ is less or equal than $1$, and
$\widetilde{\omega}(1) \|\widetilde{d}\|_{L^p (B_2)}={\sigma^{2-\frac{n}{p}}} {\omega(N)}\|{d}\|_{L^p (B_{2\sigma})}\leq  \sigma^{2-\frac{n}{p}}  \omega(1) \|{d}\|_{L^p (\Omega)}$; this yields the estimate
$\|u\|_{C^{1,\alpha}}\leq C\leq C (N+1)$. 
If $N\geq 1$, $\widetilde{u}$ is as in claim \ref{claim C1,alpha local 1a mud.var.}, and using $N/{\sigma^2}\geq 1$ we can replace $\beta$ by $\bar{\beta}$ in \eqref{mudandobetabar}; for the estimate in $\widetilde{\omega}(1) \|\widetilde{d}\|_{L^p (B_2)}$ we only need $\omega(r)\leq \omega (1)r$ for $r\geq 1$.
\end{rmk}\medskip

With these rescalings in mind, we write $F,u,M, \mu,b,d,\omega$ as the shorthand notation for $\widetilde{F},\widetilde{u}, \widetilde{\mu},\widetilde{b},\widetilde{d},\widetilde{\omega}$.
Now we can proceed with Caffarelli's iterations as in \cite{Caf89}, \cite{CafCab}, \cite{Swiech}, which consists of finding a sequence of linear functions $l_k (x):=a_k +b_k \cdot x$ such that
\vspace{0.2cm}

$(i)_k \;\;\| u-l_k \|_{L^\infty (B_{r_k})} \leq r_k^{1+\alpha}$
\vspace{0.2cm}

$(ii)_k \;\;|a_k-a_{k-1}| \leq K_2 \,r_{k-1}^{1+\alpha}\;,\;\; |b_k - b_{k-1}|\leq K_2 \,r_{k-1}^\alpha$
\vspace{0.2cm}

$(iii)_k \;\;|(u-l_k)(r_k x)-(u-l_k)(r_k y)|\leq (1+3K_1) \, r_k^{1+\alpha} |x-y|^\beta \;\; \textrm{for all } x,y\in B_1 $\\
\vspace{0.2cm}
for $r_k=\gamma^k$ for some $\gamma\in(0,1)$, for all $ k\geq 0$, with the convention that $l_{-1}\equiv 0$.

Observe that this proves the result. Indeed, $b_k = b_0 + (b_1-b_0) +  \ldots + (b_k-b_{k-1})$  converges to some $b$, since $\sum_{k=0}^\infty |b_{k} - b_{k-1}| \leq K_2 \sum_{k=0}^\infty (\gamma^{\alpha} )^{k-1}  < \infty$; also
$|b_k - b| \leq \sum_{l=k}^\infty |b_{l+1} - b_l| \leq K_2 \sum_{l=k}^\infty \gamma^{\alpha l} = K_2 \frac{\gamma^{\alpha k}}{1-\gamma^\alpha}.$
Similarly, $|a_k - a|\leq K_2\frac{\gamma^{k(1+\alpha)}}{1-\gamma^{1+\alpha}}$ and $a_k$ converges to some $a$.\medskip

Next, for each $x\in B_1$, there exists $k\geq 0$ such that $r_{k+1} < |x|\le r_k$. Then, $|u(x) - a_k - b_k \cdot x|=|u(x)-l_k(x)| \leq r_k^{1+\alpha}$,
since $x\in \overline{B}_{r_k}$, thus
\begin{align*}
|u(x) - a - b\cdot x| &\leq |u(x) - a_k - b_k \cdot x| + |a_k - a| + |b_k - b| \,|x| \\
&\leq r_k^{1+\alpha} + K_2 \frac{r_k^{1+\alpha}}{1-\gamma^{1+\alpha}} + K_2 \frac{r_k^{\alpha}}{1-\gamma^\alpha} r_{k} \\
&=  \left\{ 1+\frac{K_2 }{1-\gamma^{1+\alpha}} + \frac{K_2 }{1-\gamma^\alpha}  \right\} \frac{1}{\gamma^{1+\alpha}} \, r_{k+1}^{1+\alpha} \leq C_\gamma\, |x|^{1+\alpha}.
\end{align*}
By definition of a differentiable function,  $a=u(0)\, ,\; b=D u(0)$ and we will have obtained $| u(x) - u(0) - D u(0)\cdot x | \leq C|x|^{1+\alpha}$ and $|D u(0)| \leq C$.

Notice that there was nothing special in doing the initial argument around $0$, which we had supposed in the beginning of the proof, belonging to $\Omega^\prime$. Actually, by replacing it by any $x_0\in\Omega$ and setting the corresponding $s_0=\min \{r_0,\mathrm{dist}(x_0,\partial\Omega^\prime)\}$, we define $N=N_\sigma (x_0)$ by changing $0$ by $x_0$ in there. With this, we show that our initial function $u$ is differentiable at $x_0$ with
$$
\,| u(x)-u(x_0)-Du(x_0)\cdot(x-x_0) | \leq CW|x-x_0|^{1+\alpha},\quad |Du(x_0)| \leq CW
$$
which implies\footnote{This is just a property of functions. See, for example, a simple proof done by Sirakov in \cite{Bbook}, or \cite{tese}.} that $Du\in C^\alpha (\overline{B}_{\sigma})$ and $\|u\|_{C^{1,\alpha} (\overline{B}_{\sigma})} \leq CW$.
Thus, for the complete local estimate, we just take finitely many such points in order to cover $\Omega^\prime$.

We stress that $(i)_k$ and $(ii)_k$ are completely enough to imply the result, as above, while $(iii)_k$ is an auxiliary tool to get them.
So, let us prove $(i)_k-(iii)_k$ by induction on $k$.

For $k=0$ we set $a_0 = b_0 = 0$. Recall that $\beta$ and $K_1$ are the constants from the $C^\beta$ superlinear local estimate in $B_1$ such that
$
\|u\|_{C^\beta(\overline{B}_1)}\leq \widetilde{K}_1 (1+\delta+1)\leq 3K_1,
$
which implies $(iii)_0$. Obviously $(i)_0$ and $(ii)_0$ are satisfied too.\vspace{0.1cm}

Notice that
$
|b_k|\leq  \sum_{l=0}^k |b_{l} - b_{l-1}| \leq \frac{K_2}{\gamma^\alpha} \sum_{k=0}^\infty \gamma^{\alpha k}=\frac{K_2}{{\gamma^\alpha}(1-\gamma^\alpha)}\leq K
$
and also, for all $x\in B_1$,
$
|l_k(x)|\leq |a_k|+|b_k|| x|\leq \frac{K_2}{\gamma^{(1+\alpha)}} \sum_{k=0}^\infty \gamma^{(1+\alpha) k} + \frac{K_2}{\gamma^\alpha}\sum_{k=0}^\infty \gamma^{\alpha k}= K.
$ \vspace{0.1cm}

As the induction step, we suppose the items $(i)_k-(iii)_k$ valid in order to construct $a_{k+1}$ and $b_{k+1}$ for which $(i)_{k+1}-(iii)_{k+1}$ hold. Define
$$
v(x)=v_k({x}) := \frac{(u-l_k)(r_k {x})}{r_k^{1+\alpha}}=\frac{u(r_k{x}) - a_k - b_k\cdot x r_k}{r_k^{1+\alpha}}\, , \;\textrm{ for all }\, {x}\in B_2 .
$$
Note that $(i)_k$ says precisely that $ |v({x})| \leq 1$ for all $x\in B_1.$ Further, from this and $(iii)_k$ we get
$$
\|v\|_{C^\beta (\overline{B}_1)}=\|v\|_{L^\infty ({B}_1)}+ \sup_{\substack{ x,y \in B_1 \\ x \neq y }} \frac{|v(x)-v(y)|}{|x-y|^\beta}  \leq 2+3K_1=:K_0.
$$

\begin{claim} \label{claim C1,alpha local 2a mud.var.}
$v$ is an $L^p$-viscosity solution of
$F_k [v]=f_k (x) $ in $B_2$,
for $f_k:=f_k^1+f_k^2$ with
$
f_k^1(x):=r_k^{1-\alpha} f(r_k x); \;
f_k^2(x):=- r_k^{1-\alpha} F(r_k x,l_k(r_k x),b_k, 0)
$
and $F_k$ satisfying $(SC)_{F_k}^{\mu_{F_k}}$, where
\begin{align*}
F_k(x,s,p,X):=r_k^{1-\alpha} F(r_k x,r_k^{1+\alpha} s+l_k(r_kx), r_k^\alpha p+b_k, r_k^{\alpha -1}X)- r_k^{1-\alpha} F(r_k x,l_k(r_k x),b_k, 0),
\end{align*}
$b_{F_k}(x):=r_k b(r_k x)+2r_k \mu K$, $\mu_{F_k}:=r_k^{1+\alpha} \mu$, $d_{F_k}(x):=r_k^{2} d(r_k x)$ and $\omega_{F_k}(s):=r_k^{-1-\alpha} \omega(r_k^{1+\alpha}s)$.
\end{claim}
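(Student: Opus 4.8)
The plan is to argue exactly as in the proof of Claim \ref{claim C1,alpha local 1a mud.var.}, since $v$ is obtained from $u$ by the affine rescaling $v(x) = r_k^{-(1+\alpha)}\,(u-l_k)(r_k x)$, which is again an affine change of both the independent and dependent variables of precisely the type already handled. First I would fix $\varepsilon>0$ and a test function $\widetilde\varphi\in W^{2,p}_{\mathrm{loc}}(B_2)$ touching $v-\widetilde\varphi$ from below (or above) at $x_0\in B_2$, and set $\varphi(y) := r_k^{1+\alpha}\,\widetilde\varphi(y/r_k) + l_k(y)$ on $B_{2r_k}$, so that $u-\varphi$ attains a minimum (maximum) at $r_k x_0$. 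Using that $u$ is an $L^p$-viscosity solution of $\widetilde F[u] = \widetilde f$ on $B_2\supset B_{2r_k}$ (from Claim \ref{claim C1,alpha local 1a mud.var.}; here I am writing $F$ for $\widetilde F$ as announced), I would translate the resulting a.e.\ inequality $F(r_k x, u(r_k x), D\varphi(r_k x), D^2\varphi(r_k x)) \le (\ge)\, f(r_k x) + (-)\,r_k^{\alpha-1}\varepsilon$ into an inequality in the new variables; the chain rule gives $D\varphi(r_k x) = r_k^\alpha D\widetilde\varphi(x) + b_k$ and $D^2\varphi(r_k x) = r_k^{\alpha-1} D^2\widetilde\varphi(x)$, and multiplying by $r_k^{1-\alpha}$ produces the first term in the definition of $F_k$. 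Subtracting the constant-in-$(s,p,X)$ quantity $r_k^{1-\alpha} F(r_k x, l_k(r_k x), b_k, 0)$ from both sides yields
\begin{align*}
F_k(x, v(x), D\widetilde\varphi(x), D^2\widetilde\varphi(x)) \le (\ge)\, f_k(x) + (-)\,\varepsilon \quad \textrm{a.e.\ in } B_r(x_0),
\end{align*}
which is the desired viscosity inequality (after noting that the $\varepsilon$-perturbation scales correctly, since $r_k\le 1$ and one may shrink $\varepsilon$ harmlessly, or simply keep track of the factor $r_k^{1-\alpha}\le 1$).

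It then remains to verify $F_k(x,0,0,0)=0$ and the structure condition $(SC)^{\mu_{F_k}}_{F_k}$. The normalization is immediate from the definition. For the structure condition I would write, for $(s,p,X)$ and $(t,q,Y)$,
\begin{align*}
F_k(x,s,p,X) - F_k(x,t,q,Y) = r_k^{1-\alpha}\bigl\{ F(r_k x, r_k^{1+\alpha}s + l_k(r_kx), r_k^\alpha p + b_k, r_k^{\alpha-1}X) \\ -\, F(r_k x, r_k^{1+\alpha}t + l_k(r_kx), r_k^\alpha q + b_k, r_k^{\alpha-1}Y)\bigr\},
\end{align*}
and apply $(\widetilde{SC})^{\widetilde\mu}$ for $\widetilde F = F$ to the bracket. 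The second-order term gives $r_k^{1-\alpha}\mathcal M^\pm_{\lambda,\Lambda}(r_k^{\alpha-1}(X-Y)) = \mathcal M^\pm_{\lambda,\Lambda}(X-Y)$ by homogeneity of the Pucci operators. The first-order term gives $r_k^{1-\alpha}\,b(r_kx)\,|r_k^\alpha(p-q)| = r_k\, b(r_k x)\,|p-q|$, matching the part $r_k b(r_k x)$ of $b_{F_k}$. The quadratic term gives $r_k^{1-\alpha}\,\mu\,|r_k^\alpha(p-q)|\bigl(|r_k^\alpha p + b_k| + |r_k^\alpha q + b_k|\bigr)$; here I would use the triangle inequality $|r_k^\alpha p + b_k|\le r_k^\alpha|p| + |b_k|$ together with the already established bound $|b_k|\le K$ to split this into $r_k^{1+\alpha}\mu\,|p-q|(|p|+|q|) + 2 r_k\mu K\,|p-q|$, the first summand being the $\mu_{F_k}|p-q|(|p|+|q|)$ term and the second feeding the extra $2r_k\mu K$ piece of $b_{F_k}$. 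Finally the zero-order term gives $r_k^{1-\alpha}\,d(r_k x)\,\omega(r_k^{1+\alpha}|s-t|) = r_k^2 d(r_k x)\cdot r_k^{-1-\alpha}\omega(r_k^{1+\alpha}|s-t|) = d_{F_k}(x)\,\omega_{F_k}(|s-t|)$, and one checks routinely that $\omega_{F_k}$ is still a modulus (monotone, vanishing at $0$, and subadditive if $\omega$ is, since $s\mapsto r_k^{-1-\alpha}\omega(r_k^{1+\alpha}s)$ inherits these properties). The lower bound in $(SC)^{\mu_{F_k}}_{F_k}$ is obtained symmetrically. The only mildly delicate point — the "main obstacle", though it is routine once spotted — is the handling of the quadratic gradient term: one must absorb the cross term coming from the shift $b_k$ in the gradient slot into the first-order coefficient, which is exactly why $b_{F_k}$ carries the extra additive constant $2 r_k\mu K$ and why the uniform bound $|b_k|\le K$ (proved just before the claim from $(ii)_j$, $j\le k$, and the geometric series) is needed here. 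Everything else is bookkeeping of scaling exponents.
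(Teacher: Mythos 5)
Your proof is correct and follows exactly the paper's route: the same rescaled test function $\varphi(y)=r_k^{1+\alpha}\widetilde\varphi(y/r_k)+l_k(y)$, the same chain-rule and $\varepsilon$-bookkeeping, and the same splitting of the quadratic gradient term via $|b_k|\le K$ to absorb the cross term into the extra $2r_k\mu K$ piece of $b_{F_k}$. (Minor aside: since $\alpha<1$ and $r_k\le 1$, the auxiliary threshold $r_k^{\alpha-1}\varepsilon$ used in the definition for $u$ is actually \emph{larger} than $\varepsilon$, not smaller -- but any positive threshold works, so the conclusion is unaffected.)
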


\begin{proof}
Let $\varepsilon >0$ and $\psi\in W^{2,p}_{\textrm{loc}}(B_2)$ such that $v-\psi$ has a minimum (maximum) at $x_0$.
Define $\varphi (x):=r_k^{1+\alpha}\psi (x/r_k)+l_k(x)$ for all $x\in B_{2r_k}$; then $u-\psi$ has a minimum (maximum) at $r_k x_0$.  Since $u$ is an $L^p$-viscosity solution in $B_{2r_k}(0)$, there exists $r\in (0,2)$ such that
\begin{align*}
F(r_kx,u(r_kx),D\varphi (r_k x), D^2 \varphi (r_k x))\leq (\geq) \,f(r_kx) +(-)\, r_k^{\alpha-1} \varepsilon\;\; \textrm{ a.e. in }B_r(x_0).
\end{align*}
Using that $D\psi (x)=r_k^{-\alpha} \{D\varphi(r_k x)-b_k\}$ and $D^2\psi (x)=r_k^{1-\alpha}D^2\varphi (r_kx)$ a.e., we get
\begin{align*}
r_k^{1-\alpha}&F(r_kx,r_k^{1+\alpha}v(x)+l_k(r_kx),r_k^\alpha D\psi (x)+b_k, r_k^{\alpha-1}D^2 \psi (x))
\leq (\geq)\, r_k^{1-\alpha}f(r_kx)  +(-)\,  \varepsilon
\end{align*}
a.e. in $B_r(x_0)$. Adding $- r_k^{1-\alpha} F(r_k x,l_k(r_k x),b_k, 0)$ in both sides we obtain
$$
F_k(x,v(x),D\psi, D^2\psi)\leq (\geq )\, f_k(x)+(-) \,\varepsilon\quad\textrm{a.e. in }B_r(x_0).
$$

Moreover, $F_k$ satisfies $(SC)_{F_k}^{\mu_{F_k}}$, since $F_k(x,0,0,0)=0$ for $x\in B_2$ and
\begin{align*}
&F_k(x,r,p,X)-F_k(x,s,q,Y)= r_k^{1-\alpha}\{F(r_kx,r_k^{1+\alpha}r+l_k(r_kx),r_k^\alpha p+b_k,r_k^{\alpha-1}X)\\
&\qquad-F(r_kx, r_k^{1+\alpha}s+l_k(r_kx),r_k^\alpha q+b_k,r_k^{\alpha-1}Y)\} \\&
\leq \mathcal{M}^+_{\lambda,\Lambda} (X-Y)+ r_k b(r_kx)|p-q| +r_k\mu|p-q|\{r_k^\alpha (|p|+|q|)+b_k\} + r_k^{1-\alpha} d(r_kx) \omega(r_k^{1+\alpha}|r-s|)\\
&= \mathcal{M}^+_{\lambda,\Lambda}(X-Y)+ b_{F_k}(x)|p-q| +\mu_{F_k}|p-q|(|p|+|q|)+ d_{F_k}(x)\omega_{F_k}(|r-s|)
\end{align*}
and the left hand side is completely analogous.
\qedhere{\textit{Claim \ref{claim C1,alpha local 2a mud.var.}.} }
\end{proof}

Notice that $F_k,v,\mu_{F_k},b_{F_k},d_{F_k},\omega_{F_k}, A=0, B=0$ satisfy the hypotheses of lemma \ref{AproxLem}, since 
\begin{align*}
&\|b_{F_k}\|_{L^p (B_1)}\leq r_k^{1-\frac{n}{p}} \|b\|_{L^p (B_{r_k})}+2\mu K |B_1|^{1/p} \leq \delta;\;
\|f_k^1\|_{L^p(B_1)}\leq r_k^{1-\frac{n}{p}-\alpha} \|f\|_{L^p(B_{r_k})}\leq  \frac{\delta}{2}\, ;\\
&\|f_k^2\|_{L^p(B_1)}\leq  r_k^{1-\frac{n}{p}-\alpha} \{\|b\|_{L^p(B_{r_k})}|b_k|+ (K+1) \omega (1) \|d\|_{L^p(B_{r_k})} \}+r_k^{1-\alpha}\mu |b_k|^2  |B_1|^{\frac{1}{p}}\leq  \frac{\delta}{2};\\
&\omega_{{F_k}}(1)\|d_{F_k}\|_{L^p(B_1)}= r_k^{1-\frac{n}{p}-\alpha} \omega (r_k^{1+\alpha})\|d\|_{L^p(B_{r_k})}\leq r_k^{1-\frac{n}{p}-\alpha} \omega (1)\|d\|_{L^p(B_1)} \leq \delta ; \;\|v\|_\infty \leq 1;
\end{align*}
(recall tilde notations from claim \ref{claim C1,alpha local 1a mud.var.}), and up to defining $b$ in a zero measure set,
\begin{align*}
\bar{\beta}_{F_k}(x,x_0) &\leq r_k^{1-\alpha}\sup_{X\in \mathbb{S}^n} \frac{| F(r_k x,l_k(r_k x),b_k,r_k^{\alpha-1}X)-F(r_k x,0,0,r_k^{\alpha-1}X)|}{\|X\|+1} \\
&\qquad +\sup _{X\in \mathbb{S}^n} \frac{| F(r_k x,0,0,r_k^{\alpha-1} X)-F(r_k x_0,0,0,r_k^{\alpha-1} X)|}{r_k^{\alpha-1}(\|X\|+1)} \\
&\qquad +r_k^{1-\alpha}\sup_{X\in \mathbb{S}^n} \frac{| F(r_k x_0,0,0,r_k^{\alpha-1}X)-F(r_k x_0,l_k(r_k x_0),b_k,r_k^{\alpha-1}X)|}{\|X\|+1} \\
&\qquad + r_k^{1-\alpha}\sup_{X\in \mathbb{S}^n} \frac{| F(r_k x,l_k(r_k x),b_k,0)|+|F(r_k x_0,l_k(r_k x_0),b_k,0)|}{\|X\|+1}\\
&\leq {2r_k^{1-\alpha}}\{ (d(r_k x)+d(r_k x_0))\,\omega(\|l_k(r_kx)\|_{L^\infty(\Omega)})+ (\,b(r_kx)+b(r_kx_0)\,)|b_k|+ \mu |b_k|^2  \} \\
&\qquad \sup_{X\in \mathbb{S}^n} ({\|X\|+1} )^{-1}+ \bar{\beta}_F(r_k x,r_k x_0)
\end{align*}
since $r_k^{\alpha-1}\geq 1$; then if $b$ is bounded,
\begin{align*}
\|\bar{\beta}_{F_k}(\cdot,0)\|_{L^p (B_1)}&\leq 4 r_k^{1-\alpha} |B_1|^{\frac{1}{p}} (K+1) \,\omega(1)\|d\|_{L^\infty(B_{r_k})} + 4 Kr_k^{1-\alpha}|B_1|^{\frac{1}{p}}\|b\|_{L^\infty (B_{r_k})} +2 \mu K^2 |B_1|^{\frac{1}{p}}\\
& + \|\bar{\beta}_{F}(\cdot,0)\|_{L^p (B_{r_k})}\leq  \delta .
\end{align*}
In particular, this gives an alternative proof of $C^{1,\alpha}$ results in \cite{Swiech} in the case $\mu=0$.
\smallskip

On the other hand, without boundedness assumption on $b$, we note that it also follows from the proof of claim \ref{claim C1,alpha local 2a mud.var.} that $v$ is an $L^p$-viscosity solution of 
\begin{center}
$F_k(x,v+L_k (x), Dv+B_k, D^2 v) = f_k^1(x)$\, in $B_1$, 
\end{center}
where $F_k$ is now defined as
$F_k(x,s,p,X)=r_k^{1-\alpha} F(r_kx,r_k^{1+\alpha}s,r_k^\alpha p,r_k^{\alpha-1} X)$, for $L_k (x)=A_k +B_k \cdot x$, $A_k=r_k^{-1-\alpha} a_k $ and $B_k=r_k^{-\alpha}b_k$, which satisfies $(SC)^{\mu_{F_k}}_{F_{k}}$ for $b_{F_k}(x)=r_k b(r_kx)$, 
but $\mu_{F_k}$, $d_{F_k}$, $\omega_{F_k}$, $f_k^1$ remaining as in claim \ref{claim C1,alpha local 2a mud.var.}.
Observe that we trivially have
$\|\bar{\beta}_{F_k} (\cdot,0)\|_{L^p(B_1)}\leq  \delta$ for such $F_k$. 
Furthermore,
$|B_k|\|b_{F_k}\|_{L^p (B_1)}$ $=r_k^{1-\alpha-\frac{n}{p}}\|b\|_{L^p(B_{r_k})} |b_k|\leq K\|b\|_{L^p(B_1)}\leq \delta$;
$\mu_{F_k}|B_k| (|B_k|+1)=(r_k^{1-\alpha} |b_k|^2 +r_k |b_k|)\mu \leq K(K+1)\mu \leq \delta$; and we finally get
$\omega_{F_k}(1)\|d_{F_k}\|_{L^p (B_1)}(|A_k|+|B_k|)\leq r_k^{1-\alpha - \frac{n}{p}} ( |a_k| + r_k|b_k|)\omega(1)\|d\|_{L^p(B_{r_k})}$ $\leq 2K\omega(1)\|d\|_{L^p(B_1)}\leq \delta$ if $\omega (r)\leq \omega (1) r$ for $r\geq 0$.
Thus, such $F_k,v,\mu_{F_k},b_{F_k},d_{F_k}, \omega_{F_k}, A_k, B_k$  also satisfy lemma \ref{AproxLem} hypotheses if $\omega$ is a Lipschitz modulus.

\medskip

In any case, let $h=h_k\in C(\overline{B}_1)$ be the $C$-viscosity solution of
\begin{align*}
\left\{
\begin{array}{rclcc}
F_k (0,0,0,D^2 h) &=& 0 &\mbox{in} & B_1   \\
h &=& v &\mbox{on} & \partial B_1\,.
\end{array}
\right.
\end{align*}
By ABP we have $\|h\|_{L^\infty (B_1)}\leq \|h\|_{L^\infty (\partial B_1)}\leq 1$ and by the $C^{1,\bar{\alpha}}$ local estimate (proposition \ref{C1,baralpha}), $\|h\|_{C^{1,\bar{\alpha}} (\overline{B}_{1/2})}\leq K_2\,\|h\|_{L^\infty (B_1)} \leq K_2$.
Hence, by lemma \ref{AproxLem} applied to $F_k,v,\mu_{F_k},b_{F_k}, d_{F_k},\omega_{F_k}$, $\psi:=v\mid_{\partial B_1},\tau:=\beta$, $K_0 $ and $h$ we get, for $\varepsilon$ given in \eqref{epsilon}, that
$
\|v-h\|_{L^\infty (B_1)}\leq \varepsilon.
$

\smallskip

Define $\overline{l}(x)=\overline{l}_k(x):=h(0)+Dh(0)\cdot x$ in $B_1$, then,
\begin{align} \label{wlinfty local}
\|v-\overline{l}\|_{L^\infty (B_{2\gamma})}\leq \gamma^{1+\alpha}.
\end{align}
In fact, by the choice of $\gamma\leq \frac{1}{4}$ in \eqref{gamma}, we have for all $x\in B_{2\gamma}(0)$ that
\begin{align*}
|v(x)-\overline{l}(x)|&\leq  |v(x)-h(x)| + |h(x) - h(0) - D h(0)\cdot x|\\
&\leq K_2 \,(2\gamma)^{1+\bar{\alpha}}+ K_2|x|^{1+\bar{\alpha}}\leq 2 K_2 \,(2\gamma)^{1+\bar{\alpha}}\leq \gamma^{1+\alpha}.
\end{align*}

However, inequality \eqref{wlinfty local} and the definition of $v$ imply
\begin{align*}
|u(r_k {x}) -l_k(r_kx) - r_k^{1+\alpha} h(0) - r_k^{1+\alpha} D h(0) \cdot  x |\leq r_k^{1+\alpha} \gamma^{1+\alpha} = r_{k+1}^{1+\alpha}\, \; \textrm{ for all } x \in B_{2\gamma}\,,
\end{align*}
which is equivalent to
\begin{align*}
|u(y) -l_{k+1}(y)|\leq r_k^{1+\alpha} \gamma^{1+\alpha} = r_{k+1}^{1+\alpha}\, \; \textrm{ for all } y=r_kx \in B_{2\gamma r_k}= B_{2r_{k+1}}\, ,
\end{align*}
where $l_{k+1}(y):=l_k(y)+r_k^{1+\alpha} h(0) + r_k^{\alpha} D h(0) \cdot  y \,$. Then, we define
$$
a_{k+1} := a_k + h(0)\, r_k^{1+\alpha} , \;\;\;
b_{k+1} := b_k + D h(0)\, r_k^{\alpha}
$$
obtaining $(i)_{k+1}$. Further, $|a_{k+1} - a_k|\leq K_2 \, r_k^{1+\alpha}$, $|b_{k+1} - b_k|\leq K_2 \, r_k^{\alpha}$, which is $(ii)_{k+1}$. To finish we observe that, in order to prove $(iii)_{k+1}$, it is enough to show
\begin{align}\label{iii}
\|v-\overline{l}\|_{C^\beta (\overline{B}_\gamma)}\leq (1+2K_1) \,\gamma^{1+\alpha-\beta}.
\end{align}
Indeed, if $x,y\in B_1$ and \eqref{iii} is true, then
\begin{align*}
&|(v-\overline{l})(\gamma x)-(v-\overline{l})(\gamma y)|\leq (1+2K_1)
\gamma^{1+\alpha-\beta} |\gamma x - \gamma y|^\beta \\
& \Leftrightarrow |(u-l_k)(\gamma r_{k}x)-(u-l_k)(\gamma r_{k}y)-r_k^{\alpha} Dh(0)\cdot (x-y)\gamma r_k| \leq (1+2K_1) \gamma^{1+\alpha} r_{k}^{1+\alpha} |x-y|^\beta \\
& \Leftrightarrow |(u-l_{k+1})(r_{k+1}x)-(u-l_{k+1})(r_{k+1}y)| \leq (1+2K_1) \, r_{k+1}^{1+\alpha} |x-y|^\beta.
\end{align*}

Now, we obtain \eqref{iii} applying the local quadratic $C^\beta$ estimate (proposition \ref{Cbetaquad}) to the function $w:=v-\overline{l}$, which is an $L^p$-viscosity solution in $B_2$ of the inequalities
\begin{align} \label{Cbetafinal}
\mathcal{L}_{k}^- [w]-\mu_{F_k}|Dw|^2\leq g_k(x) ,\;\;
\mathcal{L}_{k}^+ [w]+\mu_{F_k} |Dw|^2 \geq - g_k(x),
\end{align}
where $g_k:=g_k^1+g_k^2\,$, for $g_k^1(x):=|f_k(x)-F_k(x,\overline{l}(x),Dh(0),0)|$ and $g_k^2 (x):=d_{F_k}(x)\omega_{F_k}(|w|)$, with $\mathcal{L}^\pm_k[u]:=\mathcal{M}^\pm_{\lambda,\Lambda}(D^2 u)\pm  (b_{F_k}+2K_2\,\mu_{F_k})|Du|$.
Surely, this finishes the proof of \eqref{iii}, since
$$
|g_k^1(x)|\leq |f_k (x)|+b_{F_k} (x) |Dh(0)|+\omega_{F_k} ( |\overline{l}(x) | )\, d_{F_k}(x)+ \mu_{F_k}|Dh(0)|^2,
$$
then using that $|\overline{l}(x)|\leq |h(0)|+|Dh(0)|\,|x|\leq \|h\|_{C^{1,\bar{\alpha}} (\overline{B}_{1/2})}\leq K_2$ for all $ x \in B_1$,
we have
\begin{align*}
\|\,g_k\|_{L^p(B_1)}& \leq \|f_k\|_{L^p(B_1)} +\|b_{F_k}\|_{L^p(B_1)} K_2 + (K_2+1)\,\omega_{{F_k}}(1) \|d_{F_k}\|_{L^p(B_1)} \\
&+\mu K_2^2 \, |B_1 |^{\frac{1}{p}}+(1+ \|w\|_{L^\infty ({B_1})})\,\omega_{F_k} (1)\|d_{F_k}\|_{L^p(B_1)} \leq (5+2K_2)\,\delta \leq \gamma^{\alpha}
\end{align*}
from the definition of $\delta$ in \eqref{delta}. Thus, using the estimate above and \eqref{wlinfty local} in the $C^\beta$ local estimate, properly scaled to the ball of radius $\gamma$, we obtain in particular that
\begin{align*}
[w]_{\beta,\overline{B_\gamma}} &\leq \gamma^{-\beta} \widetilde{K}_1 \,\{ \, \|w\|_{L^\infty ({B_{2\gamma}})} +\gamma^{2-\frac{n}{p}} \|g_k\|_{L^p ({B_{2\gamma}})} \,\} \\
&\leq  \gamma^{-\beta}  {K}_1 \,\{ \,\gamma^{1+\alpha} + \gamma^{2-\frac{n}{p}}\gamma^\alpha\,\}\leq 2  K_1 \,\gamma^{1+\alpha-\beta}
\end{align*}
and so
$
\|w\|_{C^\beta (\overline{B_\gamma})}=\|w\|_{L^\infty ({B_\gamma})} +
[w]_{\beta,\overline{B_\gamma}}
\leq \gamma^{1+\alpha} +2K_1 \,\gamma^{1+\alpha-\beta}
\leq (1+2K_1)\,\gamma^{1+\alpha-\beta}
$
as desired.
\end{proof}

\begin{rmk}\label{obs plafond C1,alpha}
By the proof above we see that, under $\mu, \|b\|_{L^p(\Omega)}, \omega(1) \|d\|_{L^\infty(\Omega)} \leq C_1$, both $\sigma$ and the final constant $C$ depends on $n,p,\lambda,\Lambda, \alpha,\beta, K_1,K_2,C_0$ and $C_1$.
This is very useful in applications, when we have, for example, a sequence of solutions $u_k$ with their respective coefficients uniformly bounded; with $\|u_k\|_{L^\infty}$ and the $L^p$ norm of the right hand side a priori bounded. Then we can uniformly bound the $C^{1,\alpha}$ norm of $u_k$.
\end{rmk}

\subsection{Boundary Regularity}\label{boundary regularity}

Since our equation is invariant under diffeomorphisms and $\partial\Omega\in C^{1,1}$, we only need to prove regularity and estimates for some half ball, say $B_1^+(0)$.
Indeed, near a boundary point we make a diffeomorphic change of independent variable, which takes a neighborhood of $\partial\Omega$ into $B_1^+$. This change only depends on the coefficients of the equation and the $C^{1,1}$ diffeomorphisms that describe the boundary, see details in \cite{Winter} (see also \cite{tese} for a version with superlinear growth).

\vspace{0.1cm}

Then, consider $K_1$ and $\beta$ the pair of $C^\beta$ global superlinear estimate (proposition \ref{Cbetaquad}) in $B_1^+$, related to the initial $n,p,\lambda, \Lambda,\mu$, $\|b\|_{L^p(\Omega)}, \tau$ and $C_1$, such that $$\|u\|_{C^\beta(\overline{B}_1^+)}\leq K_1 \,\{\|u\|_{L^\infty (B_1^+)} + \|f\|_{L^p(B_1^+)} + \|u\|_{C^\tau (\mathbb{T})} +\|d\|_{L^p(B_1^+)}\,\omega(\|u\|_{L^\infty (B_1^+)})\}.$$

As in \cite{Winter}, we start proving a boundary version of the approximation lemma in $B_1^\nu$.
For this set, let $K_3\geq 1$ and $\bar{\alpha}$ be the pair of $C^{1,\bar{\alpha}}$ boundary estimate (proposition \ref{C1,baralphaglobal}) associated to $n,\lambda,\Lambda$ and $\tau$, independently of $\nu>0$.

We can suppose that $K_1\geq \widetilde{K}_1$ and $\beta\leq \widetilde{\beta}$, where $\widetilde{K}_1,\widetilde{\beta}$ is the pair of $C^\beta$ global estimate for the set $B_1^\nu$ (or $B_{1/2}^\nu$), independently of $\nu>0$, with respect to an equation with given constants $n,p,\lambda,\Lambda$ and bounds for the coefficients $\mu\leq 1$, $\|b\|_{L^p(B_2^\nu)}\leq 1+2K_3\, (3+2C_n) |B_1|^{1/p}$ (for a constant $C_n$, depending only on $n$, from lemma 6.35 of \cite{GT} for $\epsilon=1/2$, that will appear in the sequel) and $\omega(1)\|d\|_{L^p(B_2^\nu)}\leq 1$, for any solution in $B_2^\nu$ satisfying $\|u\|_{L^\infty(B_2^\nu)}\leq 1$ and $\|\psi\|_{C^{1,\tau} (\mathbb{T}_2^\nu)}\leq 2$ (or for any solution in $B_1^\nu$ with coefficients in $B_1^\nu$).
Denote $ \|\cdot\|_{L^p_\nu} =\|\cdot \|_{ L^p (B_1^\nu)}$.

\begin{lem}\label{AproxLemBoundary}
Assume $F$ satisfies \ref{SCmu} in $B_1^\nu$ for some $\nu \in [0,1]$ and $f \in L^p (B_1^\nu)$, where $p>n$. Let $\psi \in C^\tau (\partial B_1^\nu)$ with $\|\psi\|_{C^\tau (\partial B_1^\nu)}\leq K_0$. Set $L(x)=A+B\cdot x$ in $B_1^\nu$, for $A\in \real$, $B\in \rn$.
Then, for every $ \varepsilon>0$, there exists $\delta\in (0,1)$, $\delta=\delta (\varepsilon,n,p,\lambda,\Lambda,\tau,K_0)$, such that if
\begin{align*}
\|{\bar{\beta}}_F(\cdot,0)\|_{ L^p_\nu}\, ,\;\|f\|_{L^p_\nu}\, ,\; \mu(|B|^2 +|B|+1) \, ,\;
\|b\|_{L^p_\nu}(|B|+1) \, ,\;\omega (1) \|d\|_{L^p_\nu}(|A|+|B|+1)\leq \delta
\end{align*}
then any two $L^p$-viscosity solutions $v$ and $h$ of
\begin{align*}
\left\{
\begin{array}{rclcc}
F(x,v+L(x),Dv+B,D^2v)&=&f(x) & \mbox{in} & B_1^\nu \\
v &=& \psi & \mbox{on} & \partial B_1^\nu
\end{array}
\right.
,\;
\left\{
\begin{array}{rclcc}
F(0,0,0,D^2h)&=& 0 & \mbox{in} & B_1^\nu \\
h &=& \psi & \mbox{on} & \partial B_1^\nu
\end{array}
\right.
\end{align*}
respectively, with $\omega (1) \|d\|_{L^p_\nu}\|v\|_{L^\infty_\nu}\leq \delta$, satisfy $\|v-h\|_{L^\infty (B_1^\nu)}\leq \varepsilon$.
\end{lem}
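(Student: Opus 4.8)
The plan is to follow the standard compactness-contradiction scheme of Caffarelli, adapted to the half-ball setting exactly as in the interior Lemma \ref{AproxLem}, but now invoking the boundary versions of the auxiliary results. Suppose the conclusion fails: then there exist $\varepsilon_0>0$, a sequence $\nu_k\in[0,1]$, operators $F_k$ satisfying $(SC)^{\mu_k}$ with coefficients $b_k,d_k$, moduli $\omega_k$, right-hand sides $f_k$, affine functions $L_k(x)=A_k+B_k\cdot x$, boundary data $\psi_k$ with $\|\psi_k\|_{C^\tau(\partial B_1^{\nu_k})}\le K_0$, and numbers $\delta_k\to 0$ controlling all the listed quantities, together with $L^p$-viscosity solutions $v_k$ (of $F_k(x,v_k+L_k,Dv_k+B_k,D^2v_k)=f_k$ with $v_k=\psi_k$ on $\partial B_1^{\nu_k}$) and $h_k$ (of $F_k(0,0,0,D^2h_k)=0$ with $h_k=\psi_k$ on $\partial B_1^{\nu_k}$), but $\|v_k-h_k\|_{L^\infty(B_1^{\nu_k})}>\varepsilon_0$.

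First I would establish the uniform $L^\infty$ bound $\|v_k\|_{L^\infty(B_1^{\nu_k})},\|h_k\|_{L^\infty(B_1^{\nu_k})}\le C_0(n,p,\lambda,\Lambda,K_0)$, exactly as in Lemma \ref{AproxLem}: for $h_k$ this is immediate from ABP since $\mathcal{M}^-(D^2h_k)\le 0\le \mathcal{M}^+(D^2h_k)$ and $\|h_k\|_{L^\infty(\partial B_1^{\nu_k})}\le K_0$; for $v_k$ one writes the equation as a differential inequality $\mathcal{L}_k^+[v_k]+\mu_k|Dv_k|^2\ge f_k\ge \mathcal{L}_k^-[v_k]-\mu_k|Dv_k|^2$ with $\mathcal{L}_k^\pm[w]=\mathcal{M}^\pm(D^2w)\pm\tilde b_k|Dw|$, $\tilde b_k=b_k+2|B_k|\mu_k$, absorbs $d_k\omega_k(|v_k+L_k|)+b_k|B_k|+\mu_k|B_k|^2+|f_k|$ into the right-hand side, checks the smallness $\mu_k\|(\text{RHS})^-\|_{L^p}(\operatorname{diam})^{n/p}\le\tilde\delta_k$ needed for quadratic ABP (this is exactly why $\delta_k\le 2^{-n/(2p)}C_n^{-1/2}\tilde\delta_k^{1/2}$ was imposed), and then closes the resulting estimate for large $k$ using $C_A\omega_k(1)\|d_k\|_{L^p}\le 1/2$. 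Next, the global $C^\beta$ estimate (Proposition \ref{Cbetaquad}, boundary version, using $\psi_k\in C^\tau(\partial B_1^{\nu_k})$ and a uniform exterior cone condition for $B_1^\nu$ uniform in $\nu\in[0,1]$) gives $\|v_k\|_{C^\beta(\overline{B_1^{\nu_k}})},\|h_k\|_{C^\beta(\overline{B_1^{\nu_k}})}\le C$ with $\beta,C$ independent of $k$, since $\mu_k,\|\tilde b_k\|_{L^p},\omega_k(1)\|d_k\|_{L^p},\|f_k\|_{L^p}\le 1$ eventually.

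Then I would extract, along a subsequence, $\nu_k\to\nu_\infty\in[0,1]$, and via Arzel\`a--Ascoli (after extending the functions continuously, as in Remark \ref{Lpquadencaixados}) $v_k\to v_\infty$, $h_k\to h_\infty$ uniformly, with $v_\infty=h_\infty=\psi_\infty$ on the flat part $\mathbb{T}_1^{\nu_\infty}$ and agreeing on $\partial B_1^{\nu_\infty}$; simultaneously $F_k(0,0,0,\cdot)\to F_\infty(\cdot)$ locally uniformly on $\mathbb{S}^n$, $F_\infty$ uniformly elliptic with $F_\infty(0)=0$, using the Pucci bounds. The key step is to show $v_\infty$ and $h_\infty$ both solve $F_\infty(D^2u)=0$ in $B_1^{\nu_\infty}$ with boundary value $\psi_\infty$: for $h_\infty$ this is a direct passage to the limit; for $v_\infty$ one applies the stability result (Proposition \ref{Lpquad}, with the nested-domains version of Remark \ref{Lpquadencaixados}) by splitting, for $\varphi\in W^{2,p}$,
\begin{align*}
F_k(x,v_k+L_k,D\varphi+B_k,D^2\varphi)-f_k-F_\infty(D^2\varphi)
&=\bigl\{F_k(x,v_k+L_k,D\varphi+B_k,D^2\varphi)-F_k(x,0,0,D^2\varphi)\bigr\}\\
&\quad+\bigl\{F_k(x,0,0,D^2\varphi)-F_k(0,0,0,D^2\varphi)\bigr\}\\
&\quad+\bigl\{F_k(0,0,0,D^2\varphi)-F_\infty(D^2\varphi)\bigr\}-f_k,
\end{align*}
and bounding the $L^p$ norm of the first brace by $\mu_k\alpha_n^{1/p}\|D\varphi\|_\infty^2+\|b_k\|_{L^p}\|D\varphi\|_\infty+(C_0+1)\omega_k(1)\|d_k\|_{L^p}+C_n\delta_k\to 0$ (using $(SC)^{\mu_k}$, the uniform bound $\|v_k+L_k\|_\infty\le C_0+|A_k|+|B_k|$ and the smallness hypotheses), the second by $\|\bar\beta_{F_k}(\cdot,0)\|_{L^p}(\|D^2\varphi\|_\infty+1)\le\delta_k(\|D^2\varphi\|_\infty+1)\to 0$, and the third by $\alpha_n^{1/p}\|F_k(0,0,0,D^2\varphi)-F_\infty(D^2\varphi)\|_{L^\infty}\to 0$. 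By uniqueness for the Dirichlet problem $F_\infty(D^2u)=0$ with continuous boundary data on a domain satisfying an exterior cone condition (Proposition \ref{ExisUnicF(D2u)}), we conclude $v_\infty=h_\infty$, contradicting $\|v_\infty-h_\infty\|_{L^\infty}\ge\varepsilon_0$.

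The main obstacle I anticipate is the passage to the limit near the flat boundary portion: one must ensure the limiting domain $B_1^{\nu_\infty}$ still satisfies a uniform exterior cone condition so that Propositions \ref{Cbetaquad} and \ref{ExisUnicF(D2u)} apply with constants independent of $k$ — this is where the choice $\nu\in[0,1]$ and the geometry of $B_1^\nu$ are used, and it is handled exactly as in \cite{Winter}. A secondary technical point is that the stability statement must be applied to the sequence of solutions on the varying domains $B_1^{\nu_k}$; this is covered by the nested-domains version (Remark \ref{Lpquadencaixados}, Proposition 1.5 in \cite{Winter}) after passing to a monotone subsequence of the $\nu_k$. Everything else is a verbatim repetition of the interior argument with $B_1$ replaced by $B_1^\nu$ and Proposition \ref{C1,baralpha} replaced by Proposition \ref{C1,baralphaglobal}.
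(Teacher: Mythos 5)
Your proposal follows the paper's proof of Lemma \ref{AproxLemBoundary} essentially verbatim: the same compactness–contradiction scheme, the same uniform $L^\infty$ bound via quadratic ABP, the same global $C^\beta$ estimate on $B_1^{\nu_k}$ via the uniform exterior cone condition, the same handling of the varying domains by passing to a monotone subsequence of $\nu_k$ (extending in the decreasing case), the same stability argument with the three-term splitting and Remark \ref{Lpquadencaixados}, and the same conclusion from uniqueness in Proposition \ref{ExisUnicF(D2u)}. The argument is correct and matches the paper's approach.
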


\begin{proof}
For $\varepsilon>0$, we will prove the existence of $\delta\in (0,1)$ as above with $\delta\leq 2^{-\frac{n}{2p}}C_n^{-\frac{1}{2}}\,\widetilde{\delta}^{1/2}$, for $\widetilde{\delta}$ as in lemma \ref{AproxLem}.
Suppose the contrary, then there exist $\varepsilon_0>0$ and sequences $\nu_k \in [0,1]$, $F_k$ satisfying $(SC)^{\mu_k}$ for $b_k,\, d_k\in L^p_+(B_1^{\nu_k} )$, $\mu_k\geq 0$, $\omega_k$ modulus; $f_k\in L^p(B_1^{\nu_k})$, $A_k\in \real$, $B_k\in\rn$, $L_k(x)=A_k+B_k\cdot x$, and $\delta_k\in(0,1)$ with $\delta_k\leq 2^{-\frac{n}{2p}}C_n^{-\frac{1}{2}}\,\widetilde{\delta_k}^{1/2}$ for $\widetilde{b}_k=b_k+2|B_k|\mu_k$, such that
\begin{align*}
\|\bar{\beta}_{F_k}(\cdot,0)\|_{ L^p_{\nu_k} },\; \|f_k\|_{ L^p_{\nu_k} } ,\; \mu_k (|B_k|^2 +|B_k|+1),\;
\|b_k\|_{ L^p_{\nu_k} } (|B_k|+1) ,\; \omega_k(1) \|d_k\|_{ L^p_{\nu_k} } (|A_k|+|B_k|+1)
\end{align*}
are less or equal than $\delta_k $ with $ \delta_k \rightarrow 0$, and $v_k$, $h_k\in C(\overline{B_1^{\nu_k}})$ are $L^p$-viscosity solutions of
\begin{align*}
\left\{
\begin{array}{rclc}
F_k(x,v_k+L_k(x),Dv_k+B_k,D^2v_k)&=& f_k(x) & B_1^{\nu_k} \\
v_k &=& \psi_k & \partial B_1^{\nu_k}
\end{array}
\right.
,\;
\left\{
\begin{array}{rclc}
F_k(0,0,0,D^2h_k)&=&0  & B_1^{\nu_k} \\
h_k &=& \psi_k  & \partial B_1^{\nu_k}
\end{array}
\right.
\end{align*}
where $\|\psi_k\|_{C^\tau (\partial B_1^{\nu_k})}\leq K_0$, $\omega_k (1) \|d_k\|_{L^p_{\nu_k}}\|v_k\|_{L^\infty_{\nu_k}} \leq \delta_k$, but $
\|v_k-h_k\|_{L^\infty (B_1^{\nu_k})} > \varepsilon_0.$

\vspace{0.1cm}
Analogously to the proof of lemma \ref{AproxLem}, ABP implies that
$\|v_k\|_{L^\infty (B_1^{\nu_k})}\,, \;\|h_k\|_{L^\infty (B_1^{\nu_k})}\leq C_0 $
for large $k$, where $C_0$ is a constant that depends only on $n,p,\lambda,\Lambda$ and $K_0$.

Notice that $B_1^{\nu_k}$ has the exterior cone property, then by the $C^\beta$ global quadratic estimate (proposition \ref{Cbetaquad}) we obtain $\beta\in (0,1)$ such that
\begin{align} \label{CbetaApBoundary}
\|v_k\|_{C^\beta (\overline{B_1^{\nu_k}})}\,, \;\|h_k\|_{C^\beta (\overline{B_1^{\nu_k}})}\leq C \, , \quad \textrm{for large }k,
\end{align}
where $\beta=\min {(\beta_0,{\tau}/{2})}$ for some $\beta_0=\beta_0 (n,p, \lambda, \Lambda)$ and $C=C(n,p, \lambda, \Lambda, C_0)$. Observe that $\beta$ and $C$ do not depend on $k$, since $\mu_k,  \,\|\tilde{b}_k\|_{L^p(B_1^{\nu_k})},\, \omega_k(1) \,\|d_k\|_{L^p(B_1^{\nu_k})}, \,\|f_k\|_{L^p(B_1^{\nu_k})}\leq 1$ and $\mathrm{diam} (B_1^{\nu_k}) \leq  2$, for large $k $. Here we have different domains, what prevents us from directly using the compact inclusion $C^\beta$ into the set of continuous functions in order to produce convergent subsequences. But this is just a technicality, as in \cite{Winter}, by taking a subsequence of $\nu_k$ that converges to some $\nu_\infty \in [0,1]$, which we can suppose monotonous. Hence we consider two cases: $B_1^{\nu_\infty}\subset B_1^{\nu_k} \subset B_1^{\nu_{k+1}}\subset ...$ or $...\subset B_1^{\nu_{k+1}}\subset B_1^{\nu_k} \subset B_1^{\nu_\infty}$, for all $k \in \n$.
In the first one, we use the compact inclusion on $\overline{B_1^{\nu_\infty}}$. In the second, we make a trivial extension of our functions to the larger domain $\overline{B_1^{\nu_\infty}}$, i.e. by defining $\psi_k$ in $\widetilde{B}_k=B_1 \cap \{ - \nu_\infty \leq x_n \leq - \nu_k \}$ in such a way that $\|\psi_k\|_{C^\tau (\widetilde{B}_k)}\leq C_0$, from where we may suppose that \eqref{CbetaApBoundary} holds on $\overline{B_1^{\nu_\infty}}$ for the extended $v_k$ and $h_k$.
In both cases, we obtain convergent subsequences
$v_k\longrightarrow v_\infty$, $ h_k\longrightarrow h_\infty$ in $C(\overline{B_1^{\nu_\infty}})$ as ${k\rightarrow\infty}$,
for some continuous functions $v_\infty, \; h_\infty$ in $\overline{B_1^{\nu_\infty}}$, with $v_\infty = h_\infty = \psi_\infty$ on $\partial B_1^{\nu_\infty}$.

Finally, we claim that $v_\infty$ and $h_\infty$ are viscosity solutions of
\begin{align*}
\left\{
\begin{array}{rclcc}
F_\infty (D^2 u)&=&0  & \mbox{in} & B_1^{\nu_\infty} \\
u &=& \psi_\infty \; & \mbox{on} & \partial B_1^{\nu_\infty}
\end{array}
\right.
\end{align*}
and therefore equal by proposition \ref{ExisUnicF(D2u)}, which contradicts $\|v_\infty-h_\infty\|_{L^\infty (B_1^{\nu_\infty})}\geq \varepsilon_0$.

For $h_\infty$, it follows by taking the uniform limits on the inequalities satisfied by $h_k$. For $v_\infty$, we apply proposition \ref{Lpquad} together with observation \ref{Lpquadencaixados}, since we have, for each $\varphi\in C^2(D)$, where $D\subset B_1^{\nu_\infty}$, that
$F_k(x,v_k+L_k(x),D\varphi+B_k, D^2 \varphi)-f_k(x)-F_\infty (D^2 \varphi)\rightarrow 0$ as ${k\rightarrow\infty}$
in $L^p(D)$, analogously to the end of the proof of lemma \ref{AproxLem}.
\end{proof}

\begin{proof}[\textit{Proof of Boundary Regularity Estimates in the set $B_1^+$.}]
We proceed as in the local case, introducing the corresponding changes, in order to deal with the boundary. Our approach is similar to \cite{Winter}. Now we set
$W:=\| u \|_{L^{\infty} (B_1^+)} + \|f \|_{L^p (B_1^+ )} + \|u\|_{C^{1,\tau} (\mathbb{T} )} +\|d\|_{L^p(B_1^+)}\,\omega(\|u\|_{L^\infty (B_1^+)})  \leq W_0$ and $s_0:=\min (r_0, \frac{1}{2})$.

\vspace{0.1cm}

Fix $\alpha\in (0,\bar{\alpha})$ with $\alpha\leq\min (\beta,1- \frac{n}{p},\tau,\bar{\alpha} (1-\tau))$ and choose $\gamma=\gamma(n,\alpha,\bar{\alpha},K_3)\in (0,\frac{1}{4}]$ such that
$2^{2+\bar{\alpha}}K_4 \, \gamma^{\bar{\alpha}}\leq \gamma^{\alpha}$,
where $K_4=K_4 \,(K_3,n)\geq 1$ will be specified later. Thus, define
$\varepsilon=\varepsilon (\gamma)$ by $K_4 \, (2\gamma)^{1+\bar{\alpha}}.$
This $\varepsilon$ provides a $\delta=\delta(\varepsilon)\in (0,1)$, the constant of the approximation lemma~\ref{AproxLemBoundary} which, up to diminishing, can be supposed to satisfy
$(5+2K_4) \,\delta \leq \gamma^\alpha.$
Next we chose $\sigma=\sigma (s_0,n,p,\alpha , \bar{\alpha},\beta, \delta,\mu, \|b\|_{L^p(B_1^+)},\omega(1)\|d\|_{L^\infty (B_1^+)},K_1,K_3,C_0) \leq \frac{s_0}{2}$ such that
$$\sigma^{\min{({1-\frac{n}{p}},\beta)}} m \leq  {\delta} \,\{{32K^2 (K_4+K+1)|B_1|^{1/p}}\}^{-1}$$
where $m:=\max{ \{ 1, \|{b}\|_{L^p (B^+_1)}, \omega(1)\|{d}\|_{L^\infty (B^+_1)},\mu (1+2^\beta K_1) W_0 \} }$ and $K:={K_4}\,{{\gamma^{-\alpha}}(1-\gamma^\alpha)^{-1}} +{K_4}\,{{\gamma^{-1-\alpha}}(1-\gamma^{1+\alpha})^{-1}}\geq K_4\geq 1$.

\vspace{0.1cm}

Fix $z=(z^\prime,z_n) \in B_{1/2}^+ (0)$. We split our analysis in two cases, depending on the distance of the point $z$ to the bottom boundary: 1) $z_n < \frac{\sigma}{2}\; \Leftrightarrow\; \nu < \frac{1}{2}$ and 2) $z_n \geq  \frac{\sigma}{2} \;\Leftrightarrow\; \nu \geq \frac{1}{2}$, for $\nu :=\frac{z_n}{\sigma}$.

Suppose the first one. In this case we will be proceeding as in \cite{Winter} by translating the problem to the set $B_{2}^\nu $, in order to use the approximation lemma in its boundary version \ref{AproxLemBoundary}. Notice  that
$$
x \in B_2^\nu (0) \; \Leftrightarrow\; \sigma x +z \in {B_{2 \sigma}^+} (z) \subset B_1^+(0).
$$
\begin{figure}[!htb]
\centering
\includegraphics[scale=0.28]{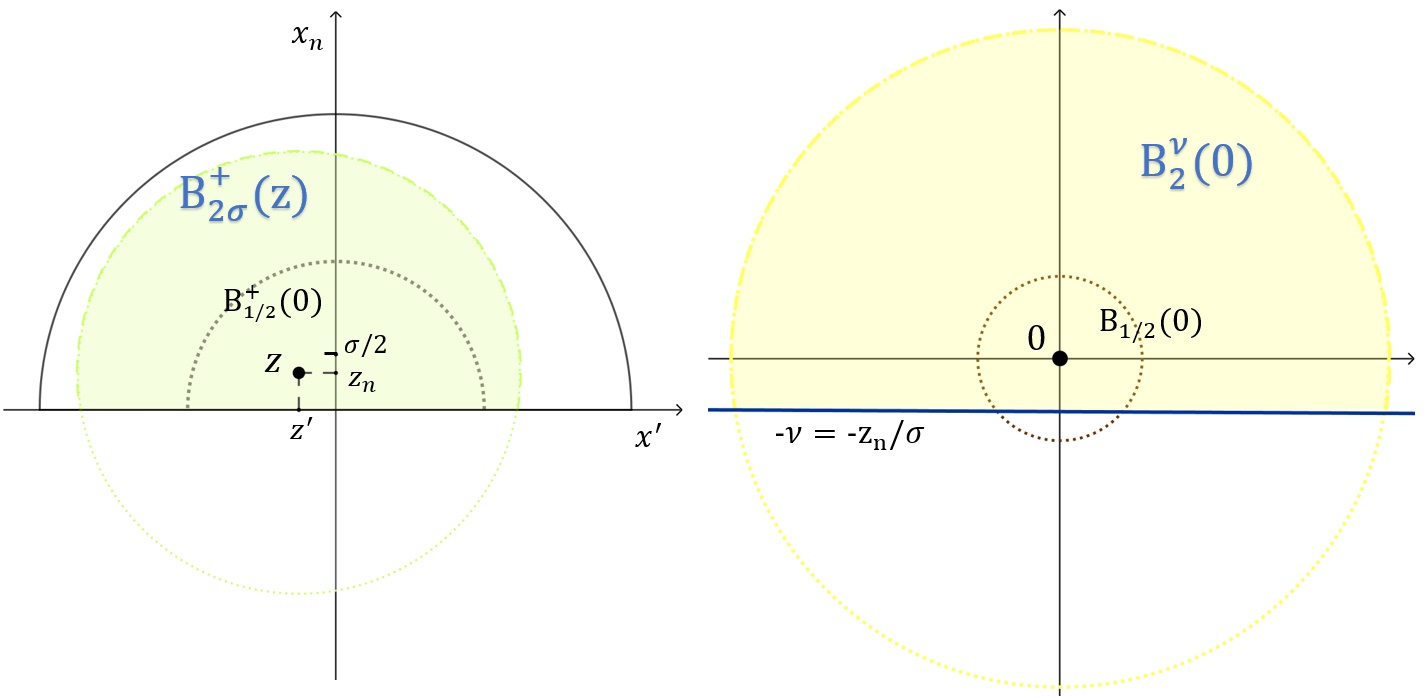}
\caption{ Illustration of the change of variable, from $B^+_{2\sigma}(z)=B_{2\sigma}(z)\cap\{x_n>0\}$, which is a subset of $B_1^+(0)$, to $B_2^\nu(0)=B_2(0)\cap\{x_n>-\nu\}$.}
\label{Rotulo}
\end{figure}

Then we define
$
N=N_\sigma (z):= \sigma W + \sup_{x\in B_2^\nu (0)} |u(\sigma x +z) - u(z)|.
$
The $C^\beta$ quadratic estimate, this time the global one, restricted to the set $B_{2 \sigma}^+ (z)$, yields
\begin{align}\label{Nboundary}
\sigma W \leq N \leq (\sigma + 2^\beta K_1 \sigma ^\beta)W
\leq (1+2^\beta K_1) \sigma ^\beta W_0.
\end{align}
Next we set $\widetilde{u}(x)=\frac{1}{N}\{u(\sigma x +z)-u(z)\}$, which is, as in claim \ref{claim C1,alpha local 1a mud.var.}, an $L^p$-viscosity solution of
\begin{align*}
\left\{
\begin{array}{rclcc}
\widetilde{F}(x,\widetilde{u},D \widetilde{u},D^2 \widetilde{u}) &=& \widetilde{f} (x) &\mbox{in} & B_2^\nu  \\
\widetilde{u} &=& \widetilde{\psi} &\textrm{on} & \mathbb{T}^{\nu}_2
\end{array}
\right.
\end{align*}
for
$$\widetilde{F}(x,r,p,X):= \frac{\sigma^2}{N} F\left( \sigma x +z,Nr+u(z),\frac{N}{\sigma}p, \frac{N}{\sigma^2} X \right)-\frac{\sigma^2}{N}
F(\sigma x+z, u(z),0,0),$$
$\widetilde{\psi}(x):=\frac{1}{N} \{ \psi (\sigma x +z)-u(z) \}$ and $\widetilde{f}:=\widetilde{f}_1+\widetilde{f}_2$ where
\begin{align*}
\widetilde{f}_1(x):={\sigma^2} f(\sigma x+z)/{N};\;\;\widetilde{f}_2(x):= -{\sigma^2}F(\sigma x+z, u(z),0,0)/{N},
\end{align*}
$\widetilde{F}$ satisfying $(\widetilde{SC})^{\widetilde{\mu}}$ for $\widetilde{b}(x)= \sigma b(\sigma x+z)$, $\widetilde{\mu}=N\mu$, $\widetilde{d}(x)= \sigma^2 d(\sigma x+z)$ and $\widetilde{\omega}(r)=\omega(Nr)/N$.
\vspace{0.1cm}

With this definition and the choice of $\sigma$ in \eqref{sigma}, we obtain $\|\widetilde{u}\|_{L^\infty (B_2^\nu )} \leq 1$, $\|\widetilde{f}\|_{L^p (B_2^\nu)}\leq \frac{\delta}{8}$, $\widetilde{\mu} \leq \frac{\delta}{8K^2 |B_1|^{1/p} }$,
$\|\widetilde{b}\|_{L^p (B_2^\nu)}\leq \frac{\delta}{16K} $,
$\widetilde{\omega} (1) \|\widetilde{d}\|_{L^p (B_2^\nu)}\leq \frac{\delta}{32(K_4+K+1)}$ and $\|\bar{\beta}_{\widetilde{F}}(0,\cdot)\|_{L^p (B_1^\nu)}\leq \delta/4 $ by choosing $\theta=\delta/8$, as in the local case.

\vspace{0.1cm}
Furthermore, we have $\|\widetilde{\psi} \|_{L^\infty (\mathbb{T}^{\nu}_2)}  \leq\|\widetilde{u}\|_{L^\infty (B_2^\nu )} \leq 1$
and then $\|D \widetilde{\psi}\|_{C^\tau (\mathbb{T}^{\nu}_2)}$ is bounded by
\begin{align*}
 \frac{\sigma}{N}  \| D\psi \|_{L^\infty (B_{2\sigma} (z) \cap \mathbb{T})} +  \frac{\sigma}{N}  \sup_{\substack{ x\neq y \in \mathbb{T}_2^\nu }} \frac{|D\psi (\sigma x +z)-D\psi (\sigma y+z)|}{|\sigma x-\sigma y|^\tau} \sigma^\tau\leq \frac{ \|\psi \|_{C^{1,\tau} (\mathbb{T})}}{W}\leq 1
\end{align*}
since $N\geq \sigma W$. Therefore, we obtain $\|\widetilde{\psi} \|_{C^{1,\tau} (\mathbb{T}^{\nu}_2)}=\|\widetilde{\psi} \|_{L^\infty (\mathbb{T}^{\nu}_2)} +\|D \widetilde{\psi}(x)\|_{C^\tau (\mathbb{T}^{\nu}_2)} \leq 2$.\medskip

We can suppose, up to this rescaling, that $F,u, \mu,b,d,\omega$ satisfy the former hypotheses related to $\widetilde{F},\widetilde{u},\widetilde{\mu},\widetilde{b},\widetilde{d},\widetilde{\omega}$.  Thus, we move to the construction of $l_k (x):=a_k +b_k \cdot x$ such that
\vspace{0.2cm}

$(i)_k \;\;\| u-l_k \|_{L^\infty (B_{r_k}^\nu)} \leq r_k^{1+\alpha}$
\vspace{0.2cm}

$(ii)_k \;\;|a_k-a_{k-1}| \leq K_4 \,r_{k-1}^{1+\alpha}\;,\;\; |b_k - b_{k-1}|\leq K_4\, r_{k-1}^\alpha$
\vspace{0.2cm}

$(iii)_k \;\;|(u-l_k)(r_k x)-(u-l_k)(r_k y)|\leq C_{1,4} \, r_k^{1+\alpha} |x-y|^\beta \;\; \textrm{ for all } x,y\in B_1^{\nu_k}$
\vspace{0.2cm}\\
where $C_{1,4}=C_{1,4}\,(K_1,K_4)$ and $\nu_k := \frac{\nu}{r_k}$, $r_k=\gamma^k$ for some $\gamma\in(0,1)$, for all $k\geq 0$ ($l_{-1}\equiv 0$). \medskip

We emphasize that these iterations will prove that the function $u$ (which plays the role of $\widetilde{u}$) is differentiable at $0$ and provide
$| u(x) - u(0) - D u(0)\cdot x | \leq C|x|^{1+\alpha} $, $|D u(0)| \leq C$
for every $x\in B_1^\nu$.
In terms of our original function defined on $B_1^+$, it means that $u$ will be differentiable at $z$, for all $z$ with $z_n<\frac{\sigma}{2}$. On the other hand, the second case $z_n\geq \frac{\sigma}{2}$ is covered by the local part, section \ref{local regularity}, since in this situation we are far away from the bottom boundary. Consequently, boundary superlinear regularity and estimates on $B_1^+$ will follow by a covering argument.

For the proof of $(i)_k-(iii)_k$, we use induction on $k$. For $k=0$ we set $a_0 = b_0 = 0$. Recall that $\beta$ and $K_1$ are the constants from $C^\beta$ quadratic global estimate in the set $B_1^{\nu}$, then we have
$
\|u\|_{C^\beta(\overline{B_1^\nu})}\leq \widetilde{K}_1 (1+\delta+2+1)\leq 5K_1
$
and so $(iii)_0$ for $\nu_0=\nu$, $(i)_0$ and $(ii)_0$ are valid.

Analogously to the the local case, we have
$|b_k|, \; \|l_k\|_{L^\infty (B_{r_k}^\nu)} \leq  K .$
For the induction's step we suppose $(i)_k-(iii)_k$ and construct $a_{k+1}$, $b_{k+1}$ such that $(i)_{k+1}-(iii)_{k+1}$ are valid.
Define
$$
v(x)=v_k({x}) := \frac{(u-l_k)(r_k {x})}{r_k^{1+\alpha}}=\frac{u(r_k{x}) - a_k - b_k\cdot x r_k}{r_k^{1+\alpha}}\, , \;\textrm{ for all }\,{x}\in B_2^{\nu_k} .
$$
Since $r_k x\in B_{r_k}^{\nu} \Leftrightarrow x \in B_1^{\nu_k}$, $(i)_k$ says that $ |v| \leq 1$ in $B_1^{\nu_k}.$ From this and  $(iii)_k$, we get
$$
\|v\|_{C^\beta (\overline{B_1^{\nu_k}})}=\|v\|_{L^\infty ({B}_1^{\nu_k})}+ \sup_{\substack{ x,y \in B_1^{\nu_k} \\ x \neq y }} \frac{|v(x)-v(y)|}{|x-y|^\beta}  \leq 1+C_{1,4}=:K_0.
$$
Notice that, as in the local case, it follows from claim \ref{claim C1,alpha local 2a mud.var.} that $v$ is as an $L^p$-viscosity solution of
\begin{align*}
\left\{
\begin{array}{rclcc}
F_k(x,v+L_k (x), Dv+B_k, D^2 v) &=& f_k (x) &\mathrm{in} & B_2^{\nu_k} \\
v & =& \psi_k &\mathrm{on} & \mathbb{T}_2^{\nu_k}
\end{array}
\right.
\end{align*}
where 
$f_k(x):=r_k^{1-\alpha} f(r_k x)$, $L_k(x)=A_k+B_k\cdot x$ in $B_2^{\nu_k}$ for $A_k=r_k^{-1-\alpha} a_k$, $B_k=r_k^{-\alpha} b_k$, and
\begin{align*}
F_k(x,s,p,X):=r_k^{1-\alpha} F(r_k x,r_k^{1+\alpha} s, r_k^\alpha p, r_k^{\alpha -1}X)
\end{align*}
satisfying $(SC)_{F_k}^{\mu_{F_k}}$ for $b_{F_k}(x)=r_k b(r_k x)$, $\mu_{F_k}=r_k^{1+\alpha}\mu$, $d_{F_k}(x)=r_k^2 d(r_k x)$, and $\omega_{F_k}(s)=r_k^{-1-\alpha}\omega(r_k^{1+\alpha} s)$, for the Lipschitz modulus $\omega$. 

\vspace{0.1cm}

The above coefficients satisfy the hypotheses of lemma \ref{AproxLemBoundary}, since $\|b_{F_k}\|_{L^p (B_1^{\nu_k})}(|B_k|+1) \leq  \delta$, $\mu_{F_k}(|B_k|^2 +|B_k|+1)\leq  \delta$, $\omega_{F_k}(1)\|d_{F_k}\|_{L^p(B_1^{\nu_k})} (|A_k|+|B_k|+1)\leq  \delta$,  $\|v\|_{\infty} \leq 1$, $\|f_k\|_{L^p(B_1^{\nu_k})}\leq \delta$,
and $\|\bar{\beta}_{F_k}(\cdot,0)\|_{L^p(B_1^{\nu_k})}\leq \delta$, see section \ref{local regularity}. 

\vspace{0.1cm}

Let $h=h_k\in C(\overline{B_1^{\nu_k}})$ be the $C$-viscosity solution of
\begin{align*}
\left\{
\begin{array}{rclcc}
F_k (0,0,0,D^2 h)&=& 0 &\mbox{in} & B_1^{\nu_k}   \\
h &=& v \quad &\mbox{on} & \partial B_1^{\nu_k}
\end{array}
\right.
\end{align*}
given by proposition \ref{ExisUnicF(D2u)}, since $B_1^{\nu_k}$ has the uniform exterior cone condition. From ABP we get $\|h\|_{L^\infty (B_1^{\nu_k})}\leq \|h\|_{L^\infty (\partial B_1^{\nu_k})}\leq 1$. Further, $h=v=\psi_k \in C^{1,\tau}(B_1\cap \{x_n=-\nu_k \})$ and we can find a uniform bound for the $C^{1,\tau}$ norm of $\psi_k$. Indeed, $\|\psi_k\|_{L^\infty (\mathbb{T}_1^{\nu_k} )}\leq \|v\|_{L^\infty (\overline{B_1^{\nu_k}} )} \leq 1$ and
$$
[D\psi_k]_{\tau,\mathbb{T}_1^{\nu_k} } =\sup_{\substack{ x,y \in \mathbb{T}_1^{\nu_k} \\ x \neq y }} \frac{|D\psi_k (x)-D\psi_k (y)|}{|x-y|^\tau} = \sup_{\substack{ \tilde{x}, \tilde{y} \in \mathbb{T}_{r_k}^{\nu} \\  \tilde{x}=r_k x, \,\tilde{y}= r_k y }} \frac{|D\psi (\tilde{x})-D\psi (\tilde{y})|}{|\tilde{x}-\tilde{y}|^\tau} r_k^{\tau - \alpha} \leq  1
$$
since $\|D\psi\|_{C^{\tau} (\mathbb{T}_1^{\nu} )} \leq  1$ and $\alpha \leq \tau$. Moreover, using the global Holder interpolation in smooth domains, lemma 6.35 of \cite{GT}, for $\epsilon = \frac{1}{2}$, there exists a constant\footnote{The proof of lemma 6.35 in \cite{GT} is based on an interpolation inequality (6.89) for adimensional Holder norms (that does not depend on the domain); followed by a partition of unity that straightens the boundary (not necessary in our case $\mathbb{T}_1^{\nu_k}\subset \real^{n-1} $). Then we have an estimate independently on $k$.}  $C_n$ such that
$$
\|\psi_k\|_{C^{1} (\mathbb{T}_1^{\nu_k} )} \leq C_n \, \|\psi_k\|_{C (\mathbb{T}_1^{\nu_k} )} +\frac{1}{2} \|\psi_k\|_{C^{1,\tau} (\mathbb{T}_1^{\nu_k} )}
$$
hence
$$
\|\psi_k\|_{C^{1,\tau} (\mathbb{T}_1^{\nu_k} )} =\|\psi_k\|_{C^{1} (\mathbb{T}_1^{\nu_k} )} + [D\psi_k]_{\tau,\mathbb{T}_1^{\nu_k}} \leq C_n +\frac{1}{2} \|\psi_k\|_{C^{1,\tau} (\mathbb{T}_1^{\nu_k} )} +1
$$
i.e. $ \|\psi_k\|_{C^{1,\tau} (\mathbb{T}_1^{\nu_k} )} \leq 2(C_n+1)$.
Thus, the $C^{1,\bar{\alpha}}$ global estimate (proposition \ref{C1,baralphaglobal}) yields
$$
\|h\|_{C^{1,\bar{\alpha}} (\overline{B_{1/2}^{\nu_k}})}\leq K_3 \,\{ \|h\|_{L^\infty (B_1^{\nu_k})} + \|\psi_k\|_{C^{1,\tau} (\mathbb{T}_1^{\nu_k} )}  \} \leq K_3\, (3+2C_n)=: K_4.
$$
Now, the approximation boundary lemma \ref{AproxLem} applied to $F_k,v,h,\nu_k, \mu_{F_k},b_{F_k}, d_{F_k},\omega_{F_k}, A_k, B_k,  \psi_k $,  $\beta, K_0$ gives us that
$
\|v-h\|_{L^\infty (B_1^{\nu_k})}\leq \varepsilon.
$

Therefore, defining $\overline{l}(x)=\overline{l}_k(x):=h(0)+Dh(0)\cdot x$ in $B_{1}^{\nu_k}$, we have
\begin{align} \label{wlinftyboundary}
\|v-\overline{l}\|_{L^\infty (B_{2\gamma}^{\nu_k})}\leq \gamma^{1+\alpha}.
\end{align}
In fact, by the choice of $\gamma$ we have, for all $x\in B_{2\gamma}^{\nu_k}(0)$,
\begin{align*}
|v(x)-\overline{l}(x)|\leq  |v(x)-h(x)| + |h(x) - h(0) - D h(0)\cdot x|\leq  2 K_4 \,(2\gamma)^{1+\bar{\alpha}}\leq \gamma^{1+\alpha}.
\end{align*}

Next, \eqref{wlinftyboundary} and the definition of $v$ imply
\begin{align*}
|u(r_k {x}) -l_k(r_kx) - r_k^{1+\alpha} h(0) - r_k^{1+\alpha} D h(0) \cdot  x |\leq r_k^{1+\alpha} \gamma^{1+\alpha} = r_{k+1}^{1+\alpha}\, \; \textrm{ for all } x \in B_{2\gamma}^{\nu_k} \, ,
\end{align*}
which is equivalent to
\begin{align*}
|u(y) -l_{k+1}(y)|\leq r_k^{1+\alpha} \gamma^{1+\alpha} = r_{k+1}^{1+\alpha}\, \; \textrm{ for all } y=r_kx \in B_{2\gamma r_k}^{\nu}= B_{2r_{k+1}}^\nu\, ,
\end{align*}
where $l_{k+1}(y):=l_k(y)+r_k^{1+\alpha} h(0) + r_k^{\alpha} D h(0) \cdot  y $. Then, we define
$a_{k+1} := a_k + h(0) r_k^{1+\alpha}$, $b_{k+1} := b_k + D h(0) r_k^{\alpha}$,
obtaining $(i)_{k+1}$. Also, $|a_{k+1} - a_k|\leq K_4 \,r_k^{1+\alpha}$, $|b_{k+1} - b_k|\leq K_4\, r_k^{\alpha}$, which is $(ii)_{k+1}$. As in the local case, to finish the proof of $(iii)_{k+1}$, it is enough to show that
$$\|v-\overline{l}\|_{C^\beta (\overline{B_\gamma^{\nu_k}})}\leq C_{1,4} \,\gamma^{1+\alpha-\beta}.$$

Let us see that this is obtained by applying the global superlinear $C^\beta$ estimate in proposition \ref{Cbetaquad} to the function $w:=v-\overline{l}$.

Analogously to the local case, $w$ is an $L^p$-viscosity solution in $B_2^{\nu_k}$ of \eqref{Cbetafinal} (see notations and coefficients there), in addition to $w=\psi_k - \overline{l}$ on $ \mathbb{T}_2^{\nu_k}$.
The definition of $\delta$ gives us  $\|g_k\|_{L^p(B_1^{\nu_k})}\leq (5+2K_4)\delta \leq \gamma^{\alpha}$.
Further, using that $\psi_k=h$ on $\mathbb{T}_{2\gamma}^{\nu_k}$, we obtain $\|\psi_k-\overline{l}\|_{L^\infty (\mathbb{T}_{2\gamma}^{\nu_k})}\leq \gamma^{1+{\alpha}}.$

Now, since $\psi_k-\overline{l}\in C^1(\mathbb{T}_{2\gamma}^{\nu_k})$, it is a Lipschitz function with constant less or equal than $\|D\psi_k-D\overline{l}\,\|_{C (\mathbb{T}_{2\gamma}^{\nu_k})}\leq 2(C_n+1)+K_4\leq 2 K_4$ and thus
\begin{align*}
|(\psi_k-\overline{l})(x)-(\psi_k-\overline{l})(y)|&=|(\psi_k-\overline{l})(x)-(\psi_k-\overline{l})(y)|^\tau |(\psi_k-\overline{l})(x)-(\psi_k-\overline{l})(y)|^{1-\tau}\\
&\leq (2K_4)^\tau (2K_4)^{1-\tau} |x-y|^\tau \,\gamma^{(1+\bar{\alpha})(1-\tau)}= 2K_4\, |x-y|^\tau \,\gamma^{1-\tau+\bar{\alpha}(1-\tau)}.
\end{align*}
Then, the choice of $\alpha$ implies that
$
[\psi_k-\overline{l}]_{\tau,\mathbb{T}_{2\gamma}^{\nu_k}}\leq 4K_4\,\gamma^{1-\tau+\alpha}.
$
Hence, from this, \eqref{wlinftyboundary} and $C^\beta$ global estimate, properly scaled for the radius $\gamma$, we obtain
\begin{align*}
[w]_{\beta,\overline{B_\gamma^{\nu_k}}} &\leq \gamma^{-\beta} \widetilde{K}_1 \,\{ \, \|w\|_{L^\infty ({B_{2\gamma}^{\nu_k}})} +\gamma^{2-\frac{n}{p}} \|g_k\|_{L^p ({B_{2\gamma}^{\nu_k}})} +\|\psi_k-\overline{l}\|_{L^\infty (\mathbb{T}_{2\gamma}^{\nu_k}) }+\gamma^\tau [\psi_k-\overline{l}]_{\tau,\mathbb{T}_{2\gamma}^{\nu_k}} \,\} \\
&\leq  \gamma^{-\beta}  K_1 \,\{ \,2\gamma^{1+\alpha} + \gamma^{2-\frac{n}{p}}\gamma^\alpha+ 4K_4\,\gamma^{1+\alpha} \,\}\leq  K_1 \,(3+4K_4)\,\gamma^{1+\alpha-\beta}
\end{align*}
and finally, for $C_{1,4}:=1+(3+4K_4)K_1=C_{1,4}\,(K_1,K_4)$, we conclude
\begin{align*}
\|w\|_{C^\beta (\overline{B_\gamma^{\nu_k}})}=\|w\|_{L^\infty ({B_\gamma^{\nu_k}})} +
[w]_{\beta,\overline{B_\gamma^{\nu_k}}}
\leq \gamma^{1+\alpha} +(3+4K_4)K_1 \,\gamma^{1+\alpha-\beta}
\leq C_{1,4}\,\gamma^{1+\alpha-\beta} .
\end{align*}
\end{proof}

Therefore, the complete proof of regularity and estimates in the global case is done by a covering argument over the domain $\Omega$, using local and boundary results.

\section{$W^{2,p}$ Results}\label{W2,p regularity}

The first application of the $C^{1,\alpha}$ theory is $W^{2,p}$ regularity for solutions of fully nonlinear equations with superlinear growth in the gradient, which are convex or concave in the variable $X$. This extends the results in \cite{Winter} to superlinear growth in the gradient in the case $p>n$.

In the next two sections we make the convention that $\omega$ is a Lipschitz modulus in the sense that $\omega (r)\leq \omega (1) r$, for all $r\geq 0$, unless otherwise specified.

\begin{teo} \label{W2,p quad}{$(W^{2,p}$ Regularity\,$)$}
Let $\Omega\subset\rn$ be a bounded domain and $u\in C(\Omega)$ an $L^p$-viscosity solution of
\begin{align}\label{eqW2,p}
F(x,u,Du,D^2 u)+g(x,Du) =f(x) \quad \textrm{in}\;\;\;\Omega
\end{align}
where $f\in L^p(\Omega)$, $p>n$, $g$ is a measurable function in $x$ such that $g(x,0)=0$ and $|g(x,p)-g(x,q)|\leq \gamma|p-q|+ \mu |p-q|(|p|+|q|)$,
$F$ is convex or concave in $X$ satisfying $(SC)^{0}$, for $b,\, d \in L^\infty_+(\Omega)$ and $\omega$ a Lipschitz modulus. Also, suppose $\| u \|_{L^{\infty} (\Omega)} + \|f \|_{L^p (\Omega)} \leq C_0$.
Then, there exists $\theta=\theta (n,p,\lambda,\Lambda,\|b\|_{L^ p(\Omega)})$ such that, if \eqref{Htheta} holds for all $r\leq \min \{ r_0, \mathrm{dist} (x_0,\partial\Omega)\}$, for some $r_0>0$ and for all $x_0 \in \Omega$, this implies that $u\in W^{2,p}_{\mathrm{loc}} (\Omega)$ and for every $\Omega^\prime \subset\subset \Omega$,
\begin{align*}
\|u\|_{W^{2,p}(\Omega^\prime)} \leq C \,\{ \| u \|_{L^{\infty} (\Omega)} + \|f \|_{L^p (\Omega)} \}
\end{align*}
where $C$ depends on $\,r_0,n,p,\lambda,\Lambda, \mu,\| b \|_{L^p (\Omega)},\omega (1)\|d\|_{L^\infty(\Omega)},  \mathrm{dist} (\Omega^\prime,\partial\Omega),\mathrm{diam} (\Omega)$ and $C_0$.

If, moreover, $\partial\Omega\in C^{1,1}$, $u\in C(\overline{\Omega})$ and $u=\psi$ on $\partial\Omega$ for some $\psi \in W^{2,p} (\Omega )$ with $\| u \|_{L^{\infty} (\Omega)} + \|f \|_{L^p (\Omega)} + \|\psi\|_{W^{2,p}(\Omega)} \leq C_1$ then, there exists $\theta =\theta (n,p,\lambda , \Lambda,\|b\|_{L^ p(\Omega)})$ such that, if \eqref{Htheta} holds for some $r_0>0$ and for all $x_0 \in \overline{\Omega}$, this implies that $u\in W^{2,p}(\Omega)$ and satisfies the estimate
\begin{align*}
\|u\|_{W^{2,p}(\Omega)} \leq C\, \{ \| u \|_{L^{\infty} (\Omega)} + \|f \|_{L^p (\Omega)}  + \|\psi\|_{W^{2,p} (\Omega)} \}
\end{align*}
where $C$ depends on $r_0,n,p,\lambda,\Lambda, \mu,\| b \|_{L^p (\Omega)},\omega (1)\|d\|_{L^\infty(\Omega)},\partial\Omega,\mathrm{diam} (\Omega)$ and $C_1$.

\end{teo}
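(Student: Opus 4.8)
The plan is to derive the $W^{2,p}$ bound by bootstrapping from the $C^{1,\alpha}$ theory of Theorem~\ref{C1,alpha regularity estimates geral}, reducing \eqref{eqW2,p} to a uniformly elliptic equation that is convex or concave in $X$, has merely \emph{linear} gradient growth, and has an $L^p$ right-hand side; one then quotes the classical $W^{2,p}$ estimate for $L^p$-viscosity solutions of such equations, i.e.\ the results of \cite{Winter} (building on \cite{Caf89}, \cite{CCKS}). Throughout set $W:=\|u\|_{L^\infty(\Omega)}+\|f\|_{L^p(\Omega)}$ in the interior case and $W:=\|u\|_{L^\infty(\Omega)}+\|f\|_{L^p(\Omega)}+\|\psi\|_{W^{2,p}(\Omega)}$ in the boundary case, so that $W\le C_0$ (resp.\ $W\le C_1$). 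First I would note that $\widetilde F:=F+g$ satisfies \ref{SCmu} with the same $\mu$, $d$, $\omega$ and with $b$ replaced by $\widetilde b:=b+\gamma\in L^\infty_+(\Omega)$, and that, since $g(x,0)=0$, one has $\widetilde F(x,0,0,X)=F(x,0,0,X)$, whence $\beta_{\widetilde F}\equiv\beta_F$ and \eqref{Htheta} is unchanged. Therefore Theorem~\ref{C1,alpha regularity estimates geral} applies to $u$: with a suitable $\theta=\theta(n,p,\lambda,\Lambda,\|b\|_{L^p(\Omega)})$, \eqref{Htheta} gives $u\in C^{1,\alpha}_{\mathrm{loc}}(\Omega)$ and $\|u\|_{C^{1,\alpha}(\overline{\Omega''})}\le CW$ for $\Omega'\subset\subset\Omega''\subset\subset\Omega$; in the boundary case, since $p>n$ and $\partial\Omega\in C^{1,1}$ the embedding $W^{2,p}(\Omega)\hookrightarrow C^{1,\tau}(\overline\Omega)$ with $\tau=1-n/p$ gives $u|_{\partial\Omega}=\psi|_{\partial\Omega}\in C^{1,\tau}(\partial\Omega)$ with $\|u\|_{C^{1,\tau}(\partial\Omega)}\le C\|\psi\|_{W^{2,p}(\Omega)}$, so the global part of Theorem~\ref{C1,alpha regularity estimates geral} yields $u\in C^{1,\alpha}(\overline\Omega)$ with $\|u\|_{C^{1,\alpha}(\overline\Omega)}\le CW$.

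Next I would freeze the superlinear term against this $C^{1,\alpha}$ bound. Fixing $\Omega'\subset\subset\Omega''\subset\subset\Omega$, put $M:=\|u\|_{C^{1,\alpha}(\overline{\Omega''})}\le CW$ and $\widehat g(x):=g(x,Du(x))$; the growth of $g$ gives $|\widehat g(x)|\le\gamma|Du(x)|+\mu|Du(x)|^2\le\gamma M+\mu M^2$ a.e.\ in $\Omega''$, so $\widehat g\in L^\infty(\Omega'')\subset L^p(\Omega'')$. Then I would verify that $u$ is an $L^p$-viscosity solution of $F(x,u,Du,D^2u)=f(x)-\widehat g(x)$ in $\Omega''$: if $\varphi\in W^{2,p}_{\mathrm{loc}}$, $\varepsilon>0$ and open $\mathcal O\subset\Omega''$ were such that $F(x,u(x),D\varphi(x),D^2\varphi(x))-(f-\widehat g)(x)\le-\varepsilon$ a.e.\ in $\mathcal O$ while $u-\varphi$ had a local maximum at some $x_0\in\mathcal O$, then $D\varphi(x_0)=Du(x_0)$ (both functions are $C^1$ since $p>n$), and since $g(x,\cdot)$ is locally Lipschitz while $D\varphi$ and $Du$ are continuous, on a small enough ball $B_\rho(x_0)\subset\mathcal O$ one has $|g(x,D\varphi(x))-g(x,Du(x))|\le\varepsilon/2$, so $F(x,u(x),D\varphi(x),D^2\varphi(x))+g(x,D\varphi(x))-f(x)\le-\varepsilon/2$ a.e.\ in $B_\rho(x_0)$ — contradicting that $u$ is an $L^p$-viscosity subsolution of \eqref{eqW2,p}; the supersolution case is identical. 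Hence $u$ solves an equation of the form \eqref{F=f} with $F$ convex or concave in $X$, structure \ref{SCmu} with $\mu=0$ and \emph{bounded} $b,d$, right-hand side $f-\widehat g\in L^p(\Omega'')$, and \eqref{Htheta} still in force for $\beta_F$.

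At this point I would apply the interior $W^{2,p}$ estimate of \cite{Winter} for $L^p$-viscosity solutions of uniformly elliptic equations convex or concave in $X$ with bounded lower-order coefficients and oscillation controlled by \eqref{Htheta}, obtaining $u\in W^{2,p}(\Omega')$ with
\begin{align*}
\|u\|_{W^{2,p}(\Omega')}\le C\bigl\{\|u\|_{L^\infty(\Omega)}+\|f-\widehat g\|_{L^p(\Omega'')}\bigr\}\le C\bigl\{W+\gamma M+\mu M^2\bigr\}\le C\,(W+W^2).
\end{align*}
Since $W\le C_0$ we have $W^2\le C_0 W$, so $\|u\|_{W^{2,p}(\Omega')}\le CW$ with $C$ also depending on $C_0$, as claimed. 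The boundary estimate is obtained in exactly the same way: after the global $C^{1,\alpha}$ step one freezes $\widehat g=g(\cdot,Du)\in L^\infty(\Omega)$ on all of $\Omega$ and invokes the global $W^{2,p}$ estimate of \cite{Winter} for the Dirichlet problem with boundary datum $\psi\in W^{2,p}(\Omega)$, yielding $\|u\|_{W^{2,p}(\Omega)}\le C(W+W^2)\le CW$ with $W$ now including $\|\psi\|_{W^{2,p}(\Omega)}$ and $C$ depending on $C_1$.

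I expect the only delicate points to be bookkeeping rather than new ideas: first, that moving to $\widetilde F=F+g$ in the $C^{1,\alpha}$ step and freezing $Du$ afterwards preserves the convexity/concavity in $X$, the structural constants, and the hypothesis \eqref{Htheta} — which is exactly where $g(x,0)=0$ (leaving $\beta_F$ untouched) and the $C^{1,\alpha}$ bound (turning the quadratic term into a bounded, hence $L^p$, right-hand side) enter; and second, that the quadratic term contributes only the factor $W^2$, absorbed through the a priori bound $W\le C_0$ — which is precisely why the final constant in this theorem must depend on $C_0$ (resp.\ $C_1$).
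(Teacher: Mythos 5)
Your proposal is correct and follows essentially the same route as the paper: apply Theorem~\ref{C1,alpha regularity estimates geral} to $F+g$ (observing that $g(x,0)=0$ leaves $\beta_F$ unchanged and that adding $g$ only shifts $b$ to $b+\gamma$), freeze the superlinear term into the right-hand side $\bar f=f-g(\cdot,Du)\in L^p$, verify via the vanishing of $D(u-\varphi)$ at an interior extremum and local Lipschitz continuity of $g(x,\cdot)$ that $u$ remains an $L^p$-viscosity solution of $F[u]=\bar f$, and conclude with the $W^{2,p}$ estimate of \cite{Winter}. The paper presents this as a single global argument (the local case obtained by dropping $\psi$) and formalizes the freezing step as Claim~\ref{claimW2,p}, but the content is identical.
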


\begin{proof}
We prove only the global case, since in the local one we just ignore the term with $\psi$, by considering it equal to zero in what follows.
Notice that $\psi\in W^{2,p}(\Omega)\subset C^{1,\tau}(\overline{\Omega})$ for some $\tau\in (0,1)$ with continuous inclusion, then $\| u \|_{L^{\infty} (\Omega)} + \|f \|_{L^p (\Omega)} + \|\psi\|_{C^{1,\tau} (\partial\Omega)}\leq C_2$.

Thus, by $C^{1,\alpha}$ regularity theorem, we have that $\bar{f}(x):= f(x)-g(x,Du)\in L^p(\Omega)$ and also $$\|u\|_{C^{1,\alpha}(\overline{\Omega})}\leq C_3\, \{\| u \|_{L^{\infty} (\Omega)} + \|\bar{f} \|_{L^p (\Omega)} + \|\psi\|_{C^{1,\tau} (\partial\Omega)} \} .$$

\begin{claim}\label{claimW2,p}
$u$ is an $L^p$-viscosity solution of $F(x,u,Du,D^2u)=\bar{f}(x)$ in $\Omega$.
\end{claim}

\begin{proof}
Let us prove the subsolution case; for the supersolution it is analogous.
Assuming the contrary, there exists some $\phi\in W^{2,p}_{\mathrm{loc}}(\Omega)$, $x_0\in \Omega$ and $\varepsilon>0$ such that $u-\phi$ has a local maximum at $x_0$ and
$F(x,u,D \phi,D^2 \phi)-\bar{f}(x)\leq -\varepsilon$\; a.e. in $B_r(x_0)$.

In turn, by the definition of $u$ being an $L^p$-viscosity subsolution of \eqref{eqW2,p}, we have that
\begin{center}
$F(x,u,D \phi,D^2 \phi) +g(x,D\phi) \geq f(x)-\varepsilon /2$\; a.e. in $B_r(x_0)$
\end{center}
up to diminishing $r>0$. By subtracting the last two inequalities, we obtain that
\begin{align}\label{estW2,p absurdo}
-\{\gamma +\mu (|Du|+|D\phi|)\}\,|Du-D\phi|\leq g(x,Du)-g(x,D\varphi)\leq - \varepsilon /2 \, \textrm{ a.e. in } B_r(x_0).
\end{align}

Since  $u-\phi \in C^1(B_r(x_0))$ has a local maximum at $x_0$, we have $D (u-\phi)(x_0)=0$ and, moreover, $|D(u-\phi)(x)|<\varepsilon \,  \{\gamma+\mu (\|Du\|_{L^\infty (B_r(x_0))}+\|D\phi\|_{L^\infty (B_r(x_0)}) +1\}^{-1}/4\,$ for all $x\in B_r(x_0)$, possibly for a smaller $r$, which contradicts \eqref{estW2,p absurdo}.
\qedhere{\textit{Claim \ref{claimW2,p}.} }
\end{proof}

Thus by Winter's result, theorem 4.3 in \cite{Winter} (or Świech \cite{Swiech} in the local case), we have that $u\in W^{2,p}(\Omega)$ (respectively $u\in W^{2,p}_{\mathrm{loc}}(\Omega)$) and
\begin{align*}
&\|u\|_{W^{2,p}(\Omega)} \leq C\, \{ \| u \|_{L^{\infty} (\Omega)} + \|\bar{f} \|_{L^p (\Omega)}  + \|\psi\|_{W^{2,p} (\Omega)} \}\\
&\leq C\,\{ \| u \|_{L^{\infty} (\Omega)} + \|f \|_{L^p (\Omega)} +\mu \|u\|_{C^1(\overline{\Omega})}^2+\gamma \|u\|_{C^1(\overline{\Omega})} + \|\psi\|_{W^{2,p} (\Omega)} \}\\
&\leq C\,\{ \| u \|_{L^\infty(\Omega)} + \|f \|_{L^p(\Omega)} +  \|\psi\|_{W^{2,p}(\Omega)}  + (\mu\, C_2+\gamma) C_3\{\| u \|_{L^\infty (\Omega)}   +\|f \|_{L^p(\Omega)} + \|\psi\|_{C^{1,\tau} (\partial\Omega)} \}  \}
\end{align*}
which implies the estimate.
\end{proof}

In theorem \ref{W2,p quad}, the final constants only depend on the $L^p$-norm of the coefficient $b$, despite the boundedness hypothesis on it. The latter hypothesis is needed to conclude that solutions are twice differentiable a.e.
Observe that, in \cite{Winter} (see theorem 4.3 there), $W^{2,p}$ results consist of two parts: (i) introducing a new equation $F(x,0,0,D^2 u)=\tilde{f}(x)$ (via corollary 1.6 in \cite{Swiech}), in which $u$ remains a solution in the $L^p$-viscosity sense; (ii) obtaining $W^{2,p}$ estimates for solutions of $F(x,0,0,D^2u)=\widetilde{f}(x)$, which are independent of the zero and first order coefficients.

From the regularity and estimates related to $\mu =0$, we can give an alternative proof of proposition 2.4 in \cite{KSexist2009}, concerning existence and uniqueness for the Pucci's extremal operators with unbounded coefficients in the case $p>n$.

\begin{prop} \label{W2,p solv}{$($Solvability of the Dirichlet problem\,$)$}
Let $\Omega\subset\rn$ be a bounded $C^{1,1}$ domain. Let $b,\, d\in L^p_+ (\Omega)$, $p>n$ and $\omega$ a Lipschitz modulus.
Let $f\in L^p(\Omega)$ and $\psi\in W^{2,p}(\Omega)$.
Then, there exists $u_\pm \in C(\overline{\Omega})$ which are the unique $L^p$-viscosity solutions of the problems
\begin{align*}
\left\{
\begin{array}{rllll}
\mathcal{M}^\pm_{\lambda,\Lambda} (D^2 u_\pm)\pm b(x)|Du_\pm|\pm d(x)w((\mp\, u_\pm)^+) &=& f(x)&\mbox{in}&\Omega\\
u_\pm &=& \psi&\mbox{on}&\partial\Omega\, .
\end{array}
\right.
\end{align*}
Moreover, $u_\pm \in W^{2,p}(\Omega)$ and satisfies the estimate
\begin{align*}
\|u_\pm\|_{W^{2,p}(\Omega)} \leq C\, \{ \| u_\pm \|_{L^{\infty} (\Omega)} + \|f \|_{L^p (\Omega)}  + \|\psi\|_{W^{2,p} (\Omega)} \}
\end{align*}
where $C$ depends only on $n,p,\lambda,\Lambda,\| b \|_{L^p (\Omega)},\omega (1)\,\| d \|_{L^p (\Omega)},\partial\Omega$ and $\mathrm{diam} (\Omega)$.
\end{prop}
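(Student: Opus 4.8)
The plan is to obtain uniqueness from a comparison principle and existence from an approximation of the unbounded coefficients by bounded ones, the point being a priori estimates uniform along the approximation, which come from the $\mu=0$ theory of Sections \ref{proof main th} and \ref{W2,p regularity}. For uniqueness, observe that $F_\pm(x,r,p,X):=\mathcal{M}^\pm_{\lambda,\Lambda}(X)\pm b(x)|p|\pm d(x)\,\omega((\mp r)^+)$ is uniformly elliptic, has an $L^p$ first order term, and is nonincreasing in $r$ (since $\omega$ is increasing and $d\geq 0$), hence proper. If $u$ is an $L^p$-viscosity subsolution and $v$ a supersolution of $F_\pm=f$ with $u\leq v$ on $\partial\Omega$, then on $\{u>v\}$ one has $\omega(u^-)\leq\omega(v^-)$, so subtracting the two viscosity inequalities and using $\mathcal{M}^+(X-Y)\geq\mathcal{M}^-(X-Y)$ (as in \cite{arma2010}, \cite{CafCab}) the difference $w=u-v$ satisfies $\mathcal{M}^+_{\lambda,\Lambda}(D^2w)+b|Dw|\geq 0$ in the $L^p$-viscosity sense there; Proposition \ref{wABPquad} with $\mu=0$ then forces $\max_{\overline{\Omega}}w\leq\max_{\partial\Omega}w^+\leq 0$, whence $u_\pm$ is unique.

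The engine of the existence proof is an $L^\infty$ bound that \emph{does not involve} $d$: every $L^p$-viscosity solution of $F_\pm=f$ with boundary datum $\psi$ satisfies $\|u_\pm\|_{L^\infty(\Omega)}\leq\|\psi\|_{L^\infty(\partial\Omega)}+C_A\|f\|_{L^p(\Omega)}=:R_0$. Indeed, on the set where the zero order term is active it has the favorable sign and can be dropped: for $u_+$ one gets $\mathcal{L}^+[u_+]\geq-|f|$ on $\{u_+>0\}$ and $\mathcal{L}^-[u_+]\leq f^+$ on $\{u_+<0\}$, and Proposition \ref{wABPquad} with $\mu=0$ gives the two-sided bound (and symmetrically for $u_-$). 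Hence $\|d\,\omega(u_\pm^-)\|_{L^p}\leq\omega(1)R_0\|d\|_{L^p}$. Now truncate $b_k=\min(b,k)$, $d_k=\min(d,k)\in L^\infty_+$, so $b_k\to b$, $d_k\to d$ in $L^p$ with $\|b_k\|_{L^p}\leq\|b\|_{L^p}$ and $\|d_k\|_{L^p}\leq\|d\|_{L^p}$. For bounded coefficients the Dirichlet problem for $F_\pm^{(k)}$ with datum $\psi$ is solvable in $C(\overline{\Omega})$ by Perron's method — comparison holds as above, and an ordered pair of continuous sub/supersolutions attaining $\psi$ on $\partial\Omega$ is built from the $W^{2,p}$ solvability of Pucci operators with bounded drift, after truncating the zero order term at level $R_0$ — producing $u_k\in C(\overline{\Omega})$ with $F_\pm^{(k)}[u_k]=f$, $u_k=\psi$ on $\partial\Omega$ and $\|u_k\|_{L^\infty}\leq R_0$.

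Transferring the fixed $L^p$ zero order function to the right-hand side, $u_k$ is an $L^p$-viscosity solution of $\mathcal{M}^\pm_{\lambda,\Lambda}(D^2u_k)\pm b_k|Du_k|=\tilde f_k$ in $\Omega$, $u_k=\psi$ on $\partial\Omega$, with $\tilde f_k=f\mp d_k\,\omega(u_k^-)$ and $\|\tilde f_k\|_{L^p}\leq\|f\|_{L^p}+\omega(1)R_0\|d\|_{L^p}$ uniformly. The operator $X\mapsto\mathcal{M}^+_{\lambda,\Lambda}(X)+b_k|p|$ is convex in $X$ (resp.\ $\mathcal{M}^-$ is concave), satisfies $(SC)^0$ with the \emph{bounded} coefficient $b_k$, and has $\beta\equiv 0$ (being $x$-independent at $p=0$), so \eqref{Htheta} holds trivially; Theorem \ref{W2,p quad} with $\mu=0$ and $g\equiv 0$ applies and, crucially, its constant depends on the drift only through $\|b_k\|_{L^p}\leq\|b\|_{L^p}$, with $d$ absent. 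Thus $\|u_k\|_{W^{2,p}(\Omega)}\leq C(\|u_k\|_{L^\infty}+\|\tilde f_k\|_{L^p}+\|\psi\|_{W^{2,p}})\leq C$ uniformly in $k$. Since $p>n$, $W^{2,p}(\Omega)\hookrightarrow\hookrightarrow C^1(\overline{\Omega})$, so along a subsequence $u_k\to u$ in $C^1(\overline{\Omega})$ and weakly in $W^{2,p}(\Omega)$, giving $u\in W^{2,p}(\Omega)\cap C(\overline{\Omega})$ with $u=\psi$ on $\partial\Omega$. Stability (Proposition \ref{Lpquad}) shows $u$ is an $L^p$-viscosity solution of $F_\pm[u]=f$: for $\varphi\in W^{2,p}(B)$, $B\subset\subset\Omega$, the relevant difference is $\pm(b_k-b)|D\varphi|\pm(d_k\omega(u_k^-)-d\,\omega(u^-))$, whose $L^p(B)$ norm tends to $0$ because $b_k\to b$ and $d_k\to d$ in $L^p$, $D\varphi\in L^\infty(B)$, $u_k\to u$ uniformly, and $\omega$ is uniformly continuous on compacts. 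By uniqueness the whole family converges; weak lower semicontinuity of $\|\cdot\|_{W^{2,p}}$, together with $\|u_k\|_{L^\infty}\to\|u\|_{L^\infty}$ and $\omega(u_k^-)\leq\omega(1)\|u_k\|_{L^\infty}$, gives the stated estimate with the asserted dependence of $C$.

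The hard part is not any single step but their \emph{uniformity in $k$}, i.e.\ keeping every constant free of $\|b_k\|_{L^\infty}$ and of $d$. This rests on three points: (i) the properness of the zero order term, which lets one erase it on the relevant sign set so the ABP bound is $d$-independent; (ii) moving the zero order term to the right-hand side, so $d$ enters only through $\omega(1)R_0\|d\|_{L^p}$ (here the hypothesis $\omega(r)\leq\omega(1)r$ is used); and (iii) the fact that the $W^{2,p}$ estimate of Theorem \ref{W2,p quad} depends on the drift only through its $L^p$ norm, notwithstanding its boundedness hypothesis. Perron solvability for bounded coefficients, used as a black box, is classical.
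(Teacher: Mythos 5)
Your overall skeleton — truncate $b,d$ to bounded $b_k,d_k$, obtain approximating solutions for bounded coefficients, move the proper zero order term to the right-hand side so that Theorem~\ref{W2,p quad} (with $\mu=\gamma=0$) yields a $W^{2,p}$ bound uniform in $k$ thanks to $\|b_k\|_{L^p}\leq\|b\|_{L^p}$, extract a limit via $W^{2,p}\hookrightarrow\hookrightarrow C^1$, and use stability (Proposition~\ref{Lpquad}) and reflexivity to identify the limit as a $W^{2,p}$ solution — is exactly the paper's route. Your explicit isolation of the $d$-free $L^\infty$ bound from ABP and the observation that the zero order term enters only through $\omega(1)R_0\|d\|_{L^p}$ are correct and match the paper, which obtains the same bound from Proposition~\ref{wABPquad}. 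Replacing the direct citation of theorem~4.6 of \cite{Winter} by Perron's method for the bounded-coefficient auxiliary problems is a legitimate, though underdeveloped, alternative.

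The genuine gap is the comparison/uniqueness paragraph. You claim that for two arbitrary $L^p$-viscosity sub- and supersolutions $u,v$ of $F_\pm=f$ with $b\in L^p_+(\Omega)$ unbounded, one can ``subtract the two viscosity inequalities'' and conclude $w=u-v$ satisfies $\mathcal{M}^+(D^2w)+b|Dw|\geq0$ in the $L^p$-viscosity sense on $\{u>v\}$. That subtraction is not an available operation in this generality: for $L^p$-viscosity solutions of equations with merely $L^p$ first order coefficients there is no theorem on sums, and the result you invoke (of the sort appearing in \cite{CafCab} Chapter~5 or in \cite{CCKS}) is proved either for $C$-viscosity solutions via sup/inf-convolutions, or under the hypothesis that one of $u,v$ is $W^{2,p}_{\mathrm{loc}}$ (twice punctually differentiable a.e.). The paper sidesteps this precisely because the existence step produces a strong solution $u\in W^{2,p}(\Omega)$: it then proves uniqueness by inserting that strong $u$ as a test function in the $L^p$-viscosity definition of any competing solution $v$, using $u^-\leq v^-+(u-v)^-$ together with the monotonicity and subadditivity of $\omega$, so that $w$ lands in the Pucci class and ABP applies. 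Your proof actually does construct such a $u\in W^{2,p}(\Omega)$, so the fix is straightforward — move uniqueness after the construction and test against the strong limit — but as written the first paragraph asserts a comparison principle that is not justified, and the ``comparison holds as above'' inside the Perron step inherits the same defect (even for bounded $b_k$, comparison for two $L^p$-viscosity functions with $L^p$ right-hand side requires one of them to be strong or some extra device, which is exactly why Winter's theorem~4.6 goes through $W^{2,p}$ regularity). You should also note that the phrase ``by uniqueness the whole family converges'' is unnecessary for the estimate: $\|u_k\|_{L^\infty}\to\|u\|_{L^\infty}$ already follows from the $C^1(\overline\Omega)$ convergence along the extracted subsequence, and the final inequality only requires weak lower semicontinuity of the $W^{2,p}$ norm.
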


\begin{proof}
It is enough to treat the upper extremal case.
Let $b_k, \, d_k\in L^\infty_+ (\Omega)$ be such that $b_k\rightarrow b$ and $d_k\rightarrow d$ in $L^p(\Omega)$.
Let $u_k \in W^{2,p}(\Omega)$ be the unique $L^p$-viscosity solution of
\begin{align*}
\left\{
\begin{array}{rllll}
\mathcal{M}^+_{\lambda,\Lambda} (D^2 u_k)+ b_k(x)|Du_k|+ d_k(x)\,\omega(u_k^-) &=& f(x)&\mbox{in}&\Omega\\
u_k &=& \psi&\mbox{on}&\partial\Omega
\end{array}
\right.
\end{align*}
given by theorem 4.6 of \cite{Winter}.
From the estimates in theorem \ref{W2,p quad}, with $d_k(x)\omega (u_k^-)$ as RHS,
\begin{align} \label{ukW2,p}
\|u_k\|_{W^{2,p}(\Omega)} \leq C_k \, \{ \| u_k \|_{L^{\infty} (\Omega)} + \|f \|_{L^p (\Omega)}  + \|\psi\|_{W^{2,p} (\Omega)} \},
\end{align}
where $C_k$ remains bounded, since $b_k$ and $d_k$ are bounded in $L^p(\Omega)$.

Now, by ABP we have that that
$\|u_k\|_{L^\infty (\Omega)}\leq \|\psi\|_{L^\infty (\partial\Omega)} + C\, \|f\|_{L^p(\Omega)}$.
From this and \eqref{ukW2,p} we get
$\|u_k\|_{W^{2,p}(\Omega)} \leq C$ and hence there exists $u\in C^{1}(\overline{\Omega})$ such that $u_k\rightarrow u$ in $C^{1}(\overline{\Omega})$.

Next, proposition \ref{Lpquad} implies that $u$ is an $L^p$-viscosity solution of
\begin{align}\label{uLpvis}
\left\{
\begin{array}{rllll}
\mathcal{M}^+_{\lambda,\Lambda} (D^2 u)+ b(x)|Du|+ d(x)\,\omega(u^-) &=& f(x)&\mbox{in}&\Omega\\
u&=& \psi&\mbox{on}&\partial\Omega\, .
\end{array}
\right.
\end{align}
Notice that $W^{2,p}(\Omega)$ is reflexive, so there exists $\tilde{u}\in W^{2,p}(\Omega)$ such that $u_k$ converges weakly to $\tilde{u}$. By uniqueness of the limit, $\tilde{u}=u$ a.e. in $\Omega$, and $u$ is a strong solution of \eqref{uLpvis}.

Finally, if there would exist another $L^p$-viscosity solution of \eqref{uLpvis}, say $v\in C(\overline{\Omega})$, then the function $w:=u-v$ satisfies $w=0$ on $\partial\Omega$ and it is an $L^p$-viscosity solution of
$\mathcal{L}^+[w]\geq 0$ in $\Omega\cap\{w>0\}$.
Indeed, since $u$ is strong, we can apply the definition of $v$ as an $L^p$-viscosity supersolution with $u$ as a test function; we also use that $u^- \leq v^- + (u-v)^-$, monotonicity and subadditivity of the modulus.
Then, by ABP we have that $w\leq 0$ in $\Omega$. Analogously, from the definition of subsolution of $v$, we obtain $w\geq 0$ in $\Omega$, and so $w\equiv 0$ in $\Omega$.
\end{proof}

The approximation procedure in the above proof cannot be used to extend theorem \ref{W2,p quad} for unbounded $b$ and $d$, since in this case we do not have uniqueness results to infer that the limiting function is the same as the one we had started with.
However, knowing a priori that the solution is strong, we can obtain $W^{2,p}$ a priori estimates in the general case, as a kind of generalization of Nagumo's lemma (for instance, lemma 5.10 in \cite{Troiani}).

\begin{lem} \label{Nagumo}{$($Generalized Nagumo's lemma\,$)$}
Let $\Omega\subset\rn$ be a bounded $C^{1,1}$ domain. Let $F$ be a convex or concave operator in the $X$ entry, satisfying \ref{SCmu}, with $b,\, d\in L^p_+ (\Omega)$ for  $p>n$ and $\omega$ an arbitrary modulus.
Suppose that there exists $\theta>0$ such that \eqref{Htheta} holds for some $r_0>0$ and for all $x_0 \in \overline{\Omega}$.
Let $f\in L^p(\Omega)$, $\psi\in W^{2,p}(\Omega)$ and let $u\in W^{2,p}(\Omega)$ be a strong solution of
\begin{align*}
\left\{
\begin{array}{rllll}
F(x,u,Du,D^2 u) &=& f(x)&\mbox{in}&\Omega\\
u&=& \psi &\mbox{on}&\partial\Omega
\end{array}
\right.
\end{align*}
such that $\| u \|_{L^{\infty} (\Omega)} + \|f \|_{L^p (\Omega)} + \|\psi\|_{W^{2,p}(\Omega)} \leq C_1$.
Then we have
\begin{align}\label{est nagumo}
\|u\|_{W^{2,p}(\Omega)} \leq C\, \{ \| u \|_{L^{\infty} (\Omega)} + \|f \|_{L^p (\Omega)}  + \|\psi\|_{W^{2,p} (\Omega)} +\| d \|_{L^p (\Omega)}\,\omega(\| u \|_{L^{\infty} (\Omega)} ) \}
\end{align}
where $C$ depends on $r_0,n,p,\lambda,\Lambda,\mu,\| b \|_{L^p (\Omega)},C_1,\partial\Omega$ and $\mathrm{diam} (\Omega)$. The local case is analogous.

If $\mu = 0$, then the above constant $C$ does not depend on $C_1$.
\end{lem}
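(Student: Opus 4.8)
The plan is to follow the two-step scheme behind Winter's $W^{2,p}$ theorem (Theorem~4.3 in \cite{Winter}; \cite{Swiech} for the interior case): first reduce the equation to a purely second order one $F(x,0,0,D^2u)=\tilde f$ which $u$ still solves, then invoke the Caffarelli-type $W^{2,p}$ estimate for convex/concave operators (\cite{Caf89}, \cite{CafCab}; step~(ii) of the proof of Theorem~4.3 in \cite{Winter}), whose constant does not feel the zero and first order coefficients. Two things change in the present situation: because $u$ is assumed a priori to be a strong solution, the step proving twice differentiability of viscosity solutions is not needed --- which is exactly why the boundedness of $b,d$ can be dropped --- and the $C^{1,\alpha}$ input used to bound $\|Du\|_{L^\infty(\Omega)}$ will be taken from Theorem~\ref{C1,alpha regularity estimates geral}. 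I will treat the global case only; the interior one is the same after discarding all terms with $\psi$. Throughout I use $\psi\in W^{2,p}(\Omega)\subset C^{1,\tau}(\overline\Omega)$, $\tau=1-\frac{n}{p}$, with $\|\psi\|_{C^{1,\tau}(\overline\Omega)}\le C\|\psi\|_{W^{2,p}(\Omega)}$.

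First I would freeze the zero order term. As $u$ is fixed and $d\in L^p(\Omega)$, the function $x\mapsto d(x)\,\omega(|u(x)|)$ is dominated by $d(x)\,\omega(\|u\|_{L^\infty(\Omega)})\in L^p(\Omega)$. Setting $\bar F(x,p,X):=F(x,0,p,X)$, condition \ref{SCmu} shows that $\bar F$ satisfies \ref{SCmu} with $d\equiv 0$ --- in particular with bounded $d$ and a trivial modulus, which is precisely the generality covered by Theorem~\ref{C1,alpha regularity estimates geral} --- that $\bar F$ is still convex or concave in $X$, and that $\beta_{\bar F}=\beta_F$, so \eqref{Htheta} is inherited. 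Rearranging the equation pointwise a.e.\ (legitimate since $u\in W^{2,p}(\Omega)$ solves it a.e.) gives that $u$ is a strong solution of
\[ \bar F(x,Du,D^2u)=\bar f(x):=f(x)-\big(\,F(x,u,Du,D^2u)-F(x,0,Du,D^2u)\,\big)\ \text{ in }\Omega,\quad u=\psi\ \text{ on }\partial\Omega, \]
with $\|\bar f\|_{L^p(\Omega)}\le\|f\|_{L^p(\Omega)}+\|d\|_{L^p(\Omega)}\,\omega(\|u\|_{L^\infty(\Omega)})$; by Proposition~\ref{Lpiffstrong.quad} it is also an $L^p$-viscosity solution. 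I would then apply the boundary part of Theorem~\ref{C1,alpha regularity estimates geral} to this equation (using $\partial\Omega\in C^{1,1}$, $u|_{\partial\Omega}=\psi\in C^{1,\tau}(\partial\Omega)$ and \eqref{Htheta} on $\overline\Omega$ for the small $\theta$ it requires), obtaining $u\in C^{1,\alpha}(\overline\Omega)$ and a bound for $\|u\|_{C^{1,\alpha}(\overline\Omega)}$, hence for $\|Du\|_{L^\infty(\Omega)}$, in terms of $\|u\|_{L^\infty(\Omega)}+\|\bar f\|_{L^p(\Omega)}+\|\psi\|_{C^{1,\tau}(\partial\Omega)}$ and the admissible constants; for $\mu=0$ this bound is independent of $C_1$.

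Next I would freeze the remaining gradient terms. Rearranging $F(x,u,Du,D^2u)=f$ once more, $u$ is a strong solution of $G(x,D^2u)=\tilde f$ in $\Omega$, $u=\psi$ on $\partial\Omega$, where $G(x,X):=F(x,0,0,X)$ and
\[ \tilde f(x):=f(x)-\big(\,F(x,u,Du,D^2u)-F(x,0,0,D^2u)\,\big),\qquad |\tilde f(x)|\le|f(x)|+b(x)|Du(x)|+\mu|Du(x)|^2+d(x)\,\omega(|u(x)|) \]
by \ref{SCmu}; thus $\tilde f\in L^p(\Omega)$ with norm controlled, via Step~1, by $\|f\|_{L^p(\Omega)}+\|b\|_{L^p(\Omega)}\|Du\|_{L^\infty(\Omega)}+\mu\|Du\|_{L^\infty(\Omega)}^2|\Omega|^{1/p}+\|d\|_{L^p(\Omega)}\omega(\|u\|_{L^\infty(\Omega)})$. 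The operator $G$ is uniformly elliptic, convex or concave in $X$, with $G(x,0)=0$ and $\beta_G=\beta_F$, so \eqref{Htheta} holds for $G$; applying to $u$ the $W^{2,p}$ estimate for such operators --- whose constant depends only on $n,p,\lambda,\Lambda$, the smallness of $\theta$, $\partial\Omega\in C^{1,1}$ and $\|\psi\|_{W^{2,p}(\Omega)}$ --- gives $\|u\|_{W^{2,p}(\Omega)}\le C(\|u\|_{L^\infty(\Omega)}+\|\tilde f\|_{L^p(\Omega)}+\|\psi\|_{W^{2,p}(\Omega)})$. Substituting the bounds for $\|\tilde f\|_{L^p(\Omega)}$ and $\|Du\|_{L^\infty(\Omega)}$ and collecting constants yields \eqref{est nagumo}, the $\mu=0$ refinement being immediate because then the constant from Step~1 is already independent of $C_1$.

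The hard part will be the very first reduction: Theorem~\ref{C1,alpha regularity estimates geral} with an arbitrary modulus $\omega$ is available only for bounded $b$ (and it is stated for bounded $d$ anyway), so it cannot be used directly on the given equation, which carries an unbounded $b$, a possibly unbounded $d\in L^p_+$, and an unrestricted $\omega$. The way around this is to absorb the zero order term $d(x)\,\omega(|u|)$ into the right-hand side, which is justified exactly because $u$ is a fixed strong --- a fortiori $C^{1,\tau}$ --- function; the reduced operator $\bar F$ then has $d\equiv 0$, so the modulus disappears from the hypotheses and $b$ need only be in $L^p$. Everything else is bookkeeping: each rearrangement of the equation holds a.e.\ because $u\in W^{2,p}(\Omega)$, and one just has to check that the constants propagate with the stated dependencies (and in particular lose their dependence on $C_1$ when $\mu=0$).
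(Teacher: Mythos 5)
Your proof is correct and, at bottom, follows the same strategy as the paper's: use the a priori $W^{2,p}$ regularity of $u$ to rewrite the problem as a pure second-order convex/concave equation $F(x,0,0,D^2u)=\tilde f(x)$, control $\|\tilde f\|_{L^p}$ through a $C^{1,\alpha}$ bound on $Du$, and invoke the Caffarelli/Świech/Winter $W^{2,p}$ estimate (i.e.\ Theorem~\ref{W2,p quad} with all lower-order coefficients set to zero). The difference is mostly one of explicitness. The paper freezes $r$ and $p$ simultaneously, obtaining $g:=f-F(x,u,Du,D^2u)+F(x,0,0,D^2u)$, and disposes of the needed bound on $\|u\|_{C^1(\overline\Omega)}$ with a terse parenthetical pointing to ``the proof there dealing with $C^{1,\alpha}$ estimates''. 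You instead do the reduction in two stages, first freezing only the zero-order term to get $\bar F(x,Du,D^2u)=\bar f$ with $d\equiv 0$, so that Theorem~\ref{C1,alpha regularity estimates geral} applies cleanly with $b\in L^p$ and an arbitrary $\omega$ (since the modulus has disappeared), and only then freezing the gradient. This two-step version makes fully visible the observation the paper leaves implicit --- that the a priori strong regularity of $u$ is what lets one push $d(x)\omega(|u|)$ to the right-hand side and dodge the $d\in L^\infty$ / Lipschitz-$\omega$ restriction in the $C^{1,\alpha}$ theorem --- and I would consider it the more readable argument.

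One small remark, applicable to both your write-up and the paper's: when $\mu>0$, the constant in the $C^{1,\alpha}$ estimate for $\bar F(x,Du,D^2u)=\bar f$ depends (through the rescaling constant $\sigma$, cf.\ remark~\ref{obs plafond C1,alpha}) on a bound for $\|u\|_\infty+\|\bar f\|_{L^p}$, and $\|\bar f\|_{L^p}$ itself contains the term $\|d\|_{L^p}\,\omega(\|u\|_\infty)$. Since the stated $C_1$ controls only $\|u\|_\infty+\|f\|_{L^p}+\|\psi\|_{W^{2,p}}$, the final constant in \eqref{est nagumo} in fact also depends, for $\mu>0$, on an a priori bound for $\|d\|_{L^p}\,\omega(C_1)$. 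This is not a flaw in your argument relative to the paper's --- the paper has the same dependency implicitly --- but it is worth making the dependence explicit if one wants the constants to be completely bookkept.
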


\begin{proof}
Note that, in particular, $u\in C^{1,\alpha}(\overline{\Omega})$ and satisfies $F(x,0,0,D^2 u)=g(x)$ a.e. in $\Omega$,
where $$g(x):=f(x)-F(x,u,Du,D^2u)+F(x,0,0,D^2u)\in L^p (\Omega),$$
since
$|F(x,u,Du,D^2u)-F(x,0,0,D^2u)|\leq b(x)|Du|+\mu |Du^2|+d(x)\,\omega(|u|) \in L^p(\Omega).$
Now, by theorem \ref{W2,p quad} (for $b,d,\mu,\gamma= 0$) and the proof there dealing with $C^{1,\alpha}$ estimates,
\begin{align*}
&\hspace{3cm}\|u\|_{W^{2,p}(\Omega)} \leq C\, \{ \| u \|_{L^{\infty} (\Omega)} + \|g \|_{L^p (\Omega)}  + \|\psi\|_{W^{2,p} (\Omega)} \}\\
&\leq C\,\{ \| u \|_{L^{\infty} (\Omega)} + \|f \|_{L^p (\Omega)} +\mu \|u\|_{C^1(\overline{\Omega})}^2 + \|\psi\|_{W^{2,p} (\Omega)}+\|b\|_{L^p(\Omega)}\|u\|_{C^1(\overline{\Omega})}+\|d\|_{L^p(\Omega)}\,\omega(\| u \|_\infty ) \}
\end{align*}
from where \eqref{est nagumo} follows.
\end{proof}

\section{The weighted eigenvalue problem}\label{First eigenvalue}

We start recalling some notations.

A subset $K\subset E$ of a Banach space is an ordered cone if it is closed, convex, $\lambda K\subset K$ for all $\lambda\geq0$ and $K\cap (-K)=\{0\}$. This cone induces a partial order on $E$, for
$ u,v\in E$, given by $u\leq v \Leftrightarrow v-u\in K.$
We say that $K$ is solid if $\mathrm{int}K\neq\emptyset$.
Further, a \textit{completely continuous} operator, defined in $E$, is continuous and takes bounded sets into precompact ones.

Following the construction of \cite{BR}, \cite{Quaas2004},  we have the following Krein-Rutman theorem for nonlinear operators --  see \cite{tese}.

\begin{teo}\label{KRquaas}{$($Generalized Krein-Rutman\,$)$}
Let $K\subset E$ be an ordered solid cone and let $T:K\rightarrow K$ be a completely continuous operator that is also
\begin{enumerate}[(i)]
\item positively 1-homogeneous, i.e. $T(\lambda u)=\lambda Tu$, for all $ \lambda\geq 0, u\in K$;
\item monotone increasing, i.e. for all $ u,v\in K$, $u\leq v$ we have $ Tu\leq Tv$;
\item strongly positive with respect to the cone, in the sense that  $T(K\setminus \{0\})\subset \mathrm{int}K$.
\end{enumerate}
Then $T$ has a positive eigenvalue $\alpha_1>0$ associated to a positive eigenfunction $w_1\in \mathrm{int}K$.\label{itemKpositive}
\end{teo}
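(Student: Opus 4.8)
The plan is to run the classical Krein--Rutman scheme in the spirit of \cite{BR,Quaas2004}: produce eigenpairs of a family of perturbed, \emph{inhomogeneous} fixed point problems, obtain uniform two-sided bounds on the approximate eigenvalues, and pass to the limit via the complete continuity of $T$. Fix $\phi_0\in\mathrm{int}K$ with $\|\phi_0\|=1$. Since $\phi_0\in K\setminus\{0\}$, property (iii) gives $T\phi_0\in\mathrm{int}K$, hence there is $\kappa>0$ with $T\phi_0\ge\kappa\phi_0$; replacing $T$ by $\tfrac2\kappa T$ --- which preserves (i)--(iii) and complete continuity and merely rescales any eigenvalue --- I may assume $T\phi_0\ge2\phi_0$. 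For $\varepsilon\in(0,1)$ consider
$$\Psi_\varepsilon(u):=\frac{Tu+\varepsilon\phi_0}{\max\{1,\,\|Tu+\varepsilon\phi_0\|\}},\qquad u\in\overline{B}_1\cap K,$$
with $\overline{B}_1$ the closed unit ball of $E$. Then $\Psi_\varepsilon$ is continuous, has precompact image, and maps the nonempty closed bounded convex set $\overline{B}_1\cap K$ into itself, so Schauder's fixed point theorem provides $u_\varepsilon\in\overline{B}_1\cap K$ with $\Psi_\varepsilon(u_\varepsilon)=u_\varepsilon$.

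The next step is to rule out the degenerate alternative $\|Tu_\varepsilon+\varepsilon\phi_0\|\le1$. In that case $u_\varepsilon=Tu_\varepsilon+\varepsilon\phi_0\in K+\mathrm{int}K\subset\mathrm{int}K$, so $s_\varepsilon:=\sup\{t\ge0:u_\varepsilon-t\phi_0\in K\}$ is positive (since $u_\varepsilon\in\mathrm{int}K$) and finite (otherwise, dividing by $t\to\infty$ and using that $K$ is closed forces $-\phi_0\in K$, hence $\phi_0\in K\cap(-K)=\{0\}$, a contradiction), and the supremum is attained. But $u_\varepsilon\ge s_\varepsilon\phi_0$ combined with (i), (ii) and $T\phi_0\ge2\phi_0$ yields $Tu_\varepsilon\ge2s_\varepsilon\phi_0$, hence $u_\varepsilon=Tu_\varepsilon+\varepsilon\phi_0\ge2s_\varepsilon\phi_0$, which by definition of $s_\varepsilon$ forces $2s_\varepsilon\le s_\varepsilon$, contradicting $s_\varepsilon>0$. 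Therefore $\|u_\varepsilon\|=1$ and $Tu_\varepsilon+\varepsilon\phi_0=\mu_\varepsilon u_\varepsilon$ with $\mu_\varepsilon:=\|Tu_\varepsilon+\varepsilon\phi_0\|>1$, while $\mu_\varepsilon\le 1+M$, where $M:=\sup\{\|Tu\|:u\in K,\ \|u\|\le1\}<\infty$ because $T$ maps bounded sets to bounded sets.

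Then I would let $\varepsilon\to0$. Writing $u_\varepsilon=\mu_\varepsilon^{-1}(Tu_\varepsilon+\varepsilon\phi_0)$ with $\mu_\varepsilon^{-1}$ uniformly bounded and $\{Tu_\varepsilon\}$ precompact, the family $\{u_\varepsilon\}$ is precompact; along a subsequence $\mu_\varepsilon\to\alpha_1\in[1,1+M]$ and $u_\varepsilon\to w_1$ in $E$, so $\|w_1\|=1$, $w_1\in K$, and the continuity of $T$ gives $Tw_1=\alpha_1w_1$. Since $w_1\neq0$, property (iii) forces $w_1=\alpha_1^{-1}Tw_1\in\mathrm{int}K$, giving the asserted eigenpair (for the original operator the eigenvalue is $\tfrac\kappa2\alpha_1>0$).

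I expect the main obstacle to be exactly guaranteeing that the limiting eigenfunction does not vanish and that the eigenvalue does not collapse to $0$; the $\varepsilon\phi_0$-perturbation, the interior-cone/monotonicity trick discarding the degenerate Schauder fixed point, and the uniform bounds $1<\mu_\varepsilon\le1+M$ are precisely what takes care of this. An equivalent route --- historically the original one, taken in \cite{BR,Quaas2004} --- replaces the Schauder step by the fixed point index on the cone $K$ (Krasnoselskii--Amann), comparing the index of $u\mapsto\mu^{-1}(Tu+\varepsilon\phi_0)$ over small and large balls of $K$ as $\mu$ ranges over $(0,\infty)$.
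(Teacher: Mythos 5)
The paper does not prove this theorem in the text; it states it and defers entirely to \cite{BR}, \cite{Quaas2004} and the thesis \cite{tese}, so there is no in-paper proof to compare against. Your argument is correct and self-contained, and it runs the usual nonlinear Krein--Rutman construction, but where the cited works typically invoke the Krasnoselskii--Amann fixed-point index on the cone to produce and control the approximate eigenpair, you instead obtain a fixed point of the renormalized perturbed map $\Psi_\varepsilon$ by Schauder, and then exclude the degenerate branch $\|Tu_\varepsilon+\varepsilon\phi_0\|\le 1$ by the elementary comparison built from $s_\varepsilon$, monotonicity, $1$-homogeneity, and the normalization $T\phi_0\ge 2\phi_0$. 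That trade is favorable for the statement at hand: it avoids setting up degree theory while still delivering the a priori bounds $1<\mu_\varepsilon\le 1+M$ needed to pass to the limit as $\varepsilon\to 0$, and the remaining steps (precompactness of $\{u_\varepsilon\}$, closedness of $K$, strong positivity pushing $w_1=\alpha_1^{-1}Tw_1$ into $\mathrm{int}K$, and undoing the initial rescaling of $T$) are carried out correctly. The index-theoretic route would pay off only if one also wanted uniqueness or simplicity of the eigenvalue, which the theorem as stated does not claim. Two small points worth stating explicitly in a polished write-up: the inclusion $K+\mathrm{int}K\subset\mathrm{int}K$ uses that $K$ is a convex cone (so $K+K\subset K$), and the attainment of the supremum defining $s_\varepsilon$ uses closedness of $K$.
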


Consider $\Omega\subset\rn$ a bounded $C^{1,1}$ domain. The application of Krein-Rutman is very standard for positive weights \cite{BR}, \cite{HessKato}. Let us recall its use when we have a fully nonlinear operator with unbounded coefficients.
About structure, we suppose
\begin{align}\mathcal{M}_{\lambda, \Lambda}^- (X-Y)-b(x)|p-q|-d(x)\,\omega ((r-s)^+) \leq F(x,r,p,X) - F(x,s,q,Y) \label{SC}\tag{SC}  \\ \leq \mathcal{M}_{\lambda, \Lambda}^+ (X-Y)+b(x)|p-q|+d(x)\,\omega ((s-r)^+)\;\; \textrm{ for } x\in \Omega \nonumber\end{align}
with $F(\cdot,0,0,0)\equiv 0$, where $0<\lambda \leq \Lambda$, $ b\in L^p_+ (\Omega)$, $ p>n$, $d\in L^\infty_+ (\Omega)$, $\omega$ a Lipschitz modulus.
Here, the condition over the zero order term in \eqref{SC} means that $F$ is proper, i.e.\ decreasing in~$r$.

Consider $E=C_0^1(\overline{\Omega})$ and the usual ordered solid cone $K=\{u\in E;\,u\geq 0 \textrm{ in }\Omega\}$ in $E$.

Let $c(x)\in L^p_+(\Omega)$ with $c>0$ in $\overline{\Omega}$, $p> n$.
As the operator on $K$, we take $T=-F^{-1}\circ c$ in the sense that $U=Tu$ iff $U$ is the unique $L^n$-viscosity solution of the Dirichlet problem
\begin{align}\label{Tu}\tag*{$(T_u)$}
\left\{
\begin{array}{rclcc}
F(x,U,DU,D^2U)&=& -c(x)u & \mbox{in} & \Omega\\
U &=& 0 &\mbox{on}& \partial\Omega
\end{array}
\right.
\end{align}
where $F$ satisfies the following hypotheses
\begin{align}\label{HF} \tag{$H$}
\left\{
\begin{array}{l}
\textrm{there exists } \theta>0 \textrm{ such that } (\overline{H})_\theta \textrm{ holds for for all } x_0\in \overline{\Omega}, \\
\eqref{SC} \textrm{ and } \eqref{ExistUnic T bem definido} \textrm{ hold},\;\,F(x,tr,tp,tX)=tF(x,r,p,X) \textrm{ for all } t\geq 0.
\end{array}
\right.
\end{align}
Here, hypothesis \eqref{ExistUnic T bem definido} means the solvability in $L^n$- sense with data in $L^p$, i.e. for any $f\in L^p(\Omega)$,
\begin{align}\label{ExistUnic T bem definido} \tag{$S$}
\textrm{there exists a unique }  u\in C(\overline{\Omega})\; L^n\textrm{-viscosity solution of }
F[u]=f(x) \textrm{ in }\Omega; \;u=0 \textrm{ on }\partial\Omega.
\end{align}
Of course, Pucci's extremal operators
$$\mathcal{L}^\pm [u]:=\mathcal{M}^\pm (D^2 u)\pm b(x)|Du|\pm d(x)\omega(u^\mp), \;\;b,\,d\in L^p_+(\Omega),$$
where $\omega$ is a Lipschitz modulus, are particular examples of $F$ satisfying \eqref{HF}. Indeed, recall that proposition \ref{W2,p solv} provides a strong solution $u\in W^{2,p}(\Omega)\subset W^{2,n}(\Omega)$, which is an $L^n$-viscosity solution by proposition \ref{Lpiffstrong.quad}. Furthermore, since it is unique among $L^p$-viscosity solutions, it is also unique among $L^n$-viscosity ones. Here all the coefficients can be unbounded.
Observe that \eqref{ExistUnic T bem definido} and $(\overline{H})_\theta$ also holds when $F$ is a uniformly continuous operator in $x$ satisfying the growth conditions in \cite{Arms2009} (see also \cite{IY}), in this case concerning $C$-viscosity notions of solutions.

On the other hand, $(\overline{H})_\theta$, \eqref{SC} and \eqref{ExistUnic T bem definido} are completely enough to ensure existence, uniqueness and $C^{1,\alpha}$ global regularity and estimates for the problem \ref{Tu} from Theorem \ref{C1,alpha regularity estimates geral}, which in turn implies that the operator $T$ is well defined and completely continuous.

Furthermore, $T=-F^{-1}\circ c$ is strictly positive with respect to the cone, thanks to SMP and Hopf.
In general, without the strict positiveness of $c$ in $\Omega$ there is no guarantee on this property, i.e., under $c\geq 0$ and $c\not\equiv 0$ in $\Omega$, we only obtain that $T(K\setminus\{0\})\subset K$.

Notice that $T=-F^{-1}\circ c\,$ has an eigenvalue $\alpha_1>0$ associated to the positive eigenfunction $\varphi_1$ if and only if $\varphi_1$ is an $L^n$-viscosity solution of
\begin{align*}
\left\{
\begin{array}{rclcc}
F[\varphi_1]+1/\alpha_1 \, c(x) \varphi_1 &=& 0 &\mbox{in} & \Omega \\
\varphi_1 &>& 0 &\mbox{in} &\Omega \\
\varphi_1 &=& 0 &\mbox{on} &\partial\Omega\, .
\end{array}
\right.
\end{align*}

For any $c\in L^p(\Omega)$ with $p> n$ and $F$ satisfying \eqref{HF}, we can define, as in \cite{BNV}, \cite{QB},
\begin{align*}
\lambda_1^\pm=\lambda_1^\pm\,(F(c),\Omega)&=\sup\left\{ \lambda>0; \; \Psi^\pm(F(c),\Omega,\lambda)\neq \emptyset\right\}
\end{align*}
where
$\Psi^\pm (F(c),\Omega,\lambda):=\left\{ \psi\in C(\overline{\Omega}); \; \pm\psi>0 \textrm{ in }\Omega,\; \pm (F[\psi]+\lambda c(x)\psi )\leq 0 \textrm{ in }\Omega \right\}$;
with inequalities holding in the $L^n$-viscosity sense. Notice that, by definition,
$\lambda_1^\pm (G(c),\Omega)=\lambda_1^\mp (F(c),\Omega),$
where $G(x,r,p,X):=-F(x,-r,-p,-X)$.

With an approximation procedure by positive weights given by Krein-Rutman theorem as above, for $F$ satisfying \eqref{HF}, we obtain existence of eigenvalues with nonnegative weight.

\begin{teo}\label{exist eig for F c geq 0}
Let $\Omega\subset\rn$ be a bounded $C^{1,1}$ domain, $c\in L^p(\Omega)$, $c\gneqq 0$ for $p>n$ and $F$ satisfying \eqref{HF} for $b,\, d\in L_+^\infty (\Omega)$. Then $F$ has two positive weighted eigenvalues $\alpha_1^\pm>0$ corresponding to normalized and signed eigenfunctions $\varphi_1^\pm\in C^{1,\alpha}(\overline{\Omega})$ that satisfies
\begin{align} \label{eq exist eigen F c lambda1+}
\left\{
\begin{array}{rclcc}
F[\varphi_1^\pm]+\alpha_1^\pm c(x) \varphi_1^\pm &=& 0 &\mbox{in} & \Omega \\
\pm \varphi_1^\pm &>& 0 &\mbox{in} &\Omega \\
\varphi_1^\pm &=& 0 &\mbox{on} &\partial\Omega
\end{array}
\right.
\end{align}
in the $L^p$-viscosity sense, with $\max_{\overline{\Omega}}\,(\pm \varphi_1^\pm) =1$.

If, moreover, the operator $F$ has $W^{2,p}$ regularity of solutions $($in the sense that every $u\in C(\overline{\Omega})$ which is an $L^p$-viscosity solution of $F[u]=f(x)\in L^p(\Omega)$, $u=0$ on $\partial\Omega$, satisfies $u\in W^{2,p}(\Omega))$, then $\alpha_1^\pm=\lambda^\pm_1$ and the conclusion is valid also for $b\in L^p_+(\Omega)$.
\end{teo}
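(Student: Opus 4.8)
The plan is to obtain the eigenpairs first for a strictly positive weight via the Krein--Rutman theorem \ref{KRquaas}, and then to pass to a general weight $c\gneqq 0$ by approximation, using the $C^{1,\alpha}$ estimate of Theorem \ref{C1,alpha regularity estimates geral} as the compactness tool. When $c>0$ in $\overline\Omega$ the argument preceding the statement applies: on $E=C_0^1(\overline\Omega)$ with the positive cone $K$, the operator $T=-F^{-1}\circ c$ is well defined by \eqref{ExistUnic T bem definido}; it is completely continuous by Theorem \ref{C1,alpha regularity estimates geral} together with the compact embedding $C^{1,\alpha}(\overline\Omega)\hookrightarrow C^1(\overline\Omega)$; it is positively $1$-homogeneous by the homogeneity of $F$ in \eqref{HF}; it is monotone by the comparison principle, which for the difference of two solutions reduces to ABP (Proposition \ref{wABPquad} with $\mu=0$) using \eqref{SC} and the properness of $F$; and $T(K\setminus\{0\})\subset\mathrm{int}\,K$ by SMP and Hopf (Theorems \ref{SMP}, \ref{Hopf}), since $F[Tu]=-c\,u\lneqq 0$. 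Theorem \ref{KRquaas} then yields a positive eigenvalue of $T$; calling its reciprocal $\alpha_1^+>0$ and normalizing the eigenfunction gives $\varphi_1^+\in\mathrm{int}\,K$ solving the $+$ part of \eqref{eq exist eigen F c lambda1+}. The $-$ part follows by applying the same construction to $G(x,r,p,X):=-F(x,-r,-p,-X)$, which again satisfies \eqref{HF}, and setting $\varphi_1^-$ and $\alpha_1^-$ to be minus the eigenfunction and the reciprocal eigenvalue obtained for $G$.

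For general $c\gneqq 0$ I would take $c_k:=c+\tfrac1k\in L^p_+(\Omega)$, which is $>0$ in $\overline\Omega$, and denote by $(\alpha_1^{\pm,k},\varphi_1^{\pm,k})$ the eigenpairs from the previous step, normalized by $\max_{\overline\Omega}(\pm\varphi_1^{\pm,k})=1$. The key is to bound $\alpha_1^{\pm,k}$ away from $0$ and $\infty$ uniformly in $k$: the lower bound is ABP, since below a threshold depending only on $n,p,\lambda,\Lambda,\mathrm{diam}(\Omega),\|b\|_{L^p},\|c\|_{L^p}+1$ the operator $F[\,\cdot\,]+\lambda c_k(\,\cdot\,)$ obeys the maximum principle, which a positive eigenfunction vanishing on $\partial\Omega$ would contradict; the upper bound is the localization argument of \cite{BNV}, \cite{QB}, \cite{Arms2009}, carried out on a ball $B\subset\subset\Omega$ on which $c$ (hence $c_k$) is bounded below by some $\delta>0$ on a set of measure $\ge\tfrac12|B|$ (such a ball exists since $c\gneqq 0$), the unbounded coefficients being handled by the weak Harnack inequality of \cite{KSweakharnack}. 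Writing the equation as $F[\varphi_1^{\pm,k}]=-\alpha_1^{\pm,k}c_k\varphi_1^{\pm,k}$, whose right-hand side is bounded in $L^p$ uniformly in $k$, Theorem \ref{C1,alpha regularity estimates geral} (in the $\mu=0$ form using $(\overline H)_\theta$, with the $\theta$ furnished by \eqref{HF}) gives $\|\varphi_1^{\pm,k}\|_{C^{1,\alpha}(\overline\Omega)}\le C$. Passing to a subsequence, $\alpha_1^{\pm,k}\to\alpha_1^\pm\in(0,\infty)$ and $\varphi_1^{\pm,k}\to\varphi_1^\pm$ in $C^1(\overline\Omega)$, with $\varphi_1^\pm=0$ on $\partial\Omega$ and $\max_{\overline\Omega}(\pm\varphi_1^\pm)=1$; stability (Proposition \ref{Lpquad}, with the fixed operator $F$ and right-hand sides converging in $L^p$, the hypothesis being verified as in Lemma \ref{AproxLem} using $|F(x,\varphi_1^{\pm,k},\cdot,\cdot)-F(x,\varphi_1^\pm,\cdot,\cdot)|\le d(x)\,\omega(|\varphi_1^{\pm,k}-\varphi_1^\pm|)$) shows $\varphi_1^\pm$ is an $L^p$-viscosity solution of $F[\varphi_1^\pm]+\alpha_1^\pm c\,\varphi_1^\pm=0$. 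Since $\max(\pm\varphi_1^\pm)=1$ the limit is not identically zero, and from \eqref{SC} together with $\omega(r)\le\omega(1)r$ the function $\pm\varphi_1^\pm\ge 0$ is an $L^p$-viscosity solution of $\mathcal M^-_{\lambda,\Lambda}(D^2(\pm\varphi_1^\pm))-b\,|D(\pm\varphi_1^\pm)|-\omega(1)\|d\|_{L^\infty}(\pm\varphi_1^\pm)\le 0$, so SMP (Theorem \ref{SMP}) forces $\pm\varphi_1^\pm>0$ in $\Omega$. This proves the first assertion.

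If in addition $F$ has $W^{2,p}$ regularity, then every $L^p$-viscosity solution with zero boundary data is a strong solution, which makes the comparison principle for $F[\,\cdot\,]+\lambda c(\,\cdot\,)$ available (the strong solution may be inserted as a $W^{2,p}_{\mathrm{loc}}$ test function), so the monotonicity and strong positivity of $T$ no longer require bounded $b$ and the whole construction goes through for $b\in L^p_+(\Omega)$. For the identification $\alpha_1^\pm=\lambda_1^\pm$: the inequality $\alpha_1^\pm\le\lambda_1^\pm$ is immediate, because for every $\lambda\le\alpha_1^\pm$ one has $\pm\bigl(F[\varphi_1^\pm]+\lambda c\,\varphi_1^\pm\bigr)=\pm(\lambda-\alpha_1^\pm)c\,\varphi_1^\pm\le 0$, i.e. $\varphi_1^\pm\in\Psi^\pm(F(c),\Omega,\lambda)$; and $\alpha_1^\pm\ge\lambda_1^\pm$ follows by the classical sliding argument: if some $\psi\in\Psi^+(F(c),\Omega,\lambda)$ with $\lambda>\alpha_1^+$ existed, then $\varphi_1^+$ would be a supersolution and $\psi$ a subsolution of $F[\,\cdot\,]+\lambda c(\,\cdot\,)$, and pushing $t\varphi_1^+$ down until it first touches $\psi$, then invoking Hopf at a contact point on $\partial\Omega$ (and SMP in the interior), produces a contradiction; applying this to $G$ yields $\alpha_1^-=\lambda_1^-$. \textbf{The main difficulties} are the uniform upper bound on the approximate eigenvalues $\alpha_1^{\pm,k}$ as the weights $c_k$ degenerate to $c$ (which forces the localized weak Harnack argument), and the fact that a comparison principle for $L^p$-viscosity solutions with a merely $L^p$ first-order coefficient is not available in general — this is precisely why the sharper conclusions ($\alpha_1^\pm=\lambda_1^\pm$ and $b\in L^p_+$) are obtained only under the extra $W^{2,p}$ regularity hypothesis.
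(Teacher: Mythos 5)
Your approach for the first statement (with $b,d\in L^\infty_+$) matches the paper's strategy: regularize the weight, apply Krein--Rutman, use the $C^{1,\alpha}$ global estimate of Theorem \ref{C1,alpha regularity estimates geral} for compactness, pass to the limit by stability, and finish with SMP/ABP. The minor differences (you bound the approximate eigenvalues from below at each step rather than showing $\alpha_1>0$ only in the limit; you sketch a weak-Harnack localization for the upper bound in place of the paper's radial barrier in Lemma \ref{boundedness eig QB}) are legitimate variants. The identification $\alpha_1^\pm=\lambda_1^\pm$ via a sliding/touching argument is also in the same spirit as the paper's use of Proposition \ref{th4.1 QB}.

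There is a genuine gap in the last step, the extension to $b\in L^p_+(\Omega)$. You assert that once $W^{2,p}$ regularity is available the comparison principle becomes usable and therefore ``the whole construction goes through for $b\in L^p_+(\Omega)$.'' Comparison does handle the well-definedness, monotonicity, and strong positivity of $T$, but it does nothing for the \emph{uniform upper bound} on the approximate eigenvalues $\alpha_1^{\pm,\varepsilon}$ (equivalently $\lambda_1^\pm(F(c_\varepsilon),\Omega)$) when $b$ is merely $L^p$. The barrier construction (and any pointwise supersolution argument of BNV type) fails for unbounded $b$: the term $b(x)|D\sigma|$ in the comparison can no longer be controlled pointwise. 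Your weak-Harnack sketch was invoked to deal with the unbounded \emph{weight} $c$, not an unbounded \emph{drift} $b$, and you do not revisit it once $b$ is allowed to be unbounded. This is exactly the obstruction the paper singles out explicitly (``this does not seem to be a consequence of the usual methods for bounding a first eigenvalue'') and resolves by a separate argument: Lemma \ref{limit unbounded b cont c} bounds the eigenvalue for $b\in L^p$ by a Gidas--Spruck-type blow-up that exploits that the rescaled drift $r_k\,b_k(x_0^k+r_k x)$ converges to zero in $L^p_{\mathrm{loc}}$ and reduces matters to a limiting equation with no lower-order terms, combined with the interior/boundary $C^{1,\alpha}$ estimates of Theorem \ref{C1,alpha regularity estimates geral}, and is then transferred to the general weight via Lemma \ref{limit final}. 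Without some substitute for this lemma your compactness step has no uniform upper bound on the eigenvalues and the limit passage for $b\in L^p_+$ collapses.
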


Notice that we obtain positive eigenvalues because $F$ is proper. For general existence related to nonproper operators see the script in \cite{QB} for bounded coefficients.
We also stress that, without regularity assumptions on the domain, it is still possible to obtain the existence of an eigenpair, as in \cite{BNV} and \cite{QB}; in such cases the eigenfunction belongs to $C^{1,\alpha}_{\mathrm{loc}}(\Omega)\cap C(\overline{\Omega})$ by using $C^{1,\alpha}$ local regularity instead of the global one.

We start proving some auxiliary results which take into account the unboundedness of $c$.

\begin{prop}\label{th4.1 QB} Let $u,v\in C(\overline{\Omega})$ be $L^n$-viscosity solutions of
\begin{align}\label{eq th1.4 2case ineq}
\left\{
\begin{array}{rclcc}
F[u]+c(x)u &\geq & 0 & \mbox{in} &\Omega \\
u &<& 0& \mbox{in} & \Omega
\end{array}
\right. ,\quad
\left\{
\begin{array}{rcll}
F[v]+c(x)v &\leq & 0 &\mbox{in} \;\;\; \Omega \\
v &\geq & 0 & \mbox{on} \;\; \partial\Omega\\
v(x_0) &< & 0  &x_0 \in\Omega
\end{array}
\right.
\end{align}
with $F$ satisfying \eqref{HF}, $c\in L^p(\Omega)$, $p>n$. Suppose one, $u$ or $v$, is a strong solution. Then, $u=tv$ for some $t>0$.
The conclusion is the same if $F[u]+c(x)u\leq 0$, $F[v]+c(x)v\geq 0 $ in $\Omega$, with $u>0$ in $\Omega$, $v\leq 0$ on $\partial\Omega$ and $v(x_0)>0$ for some $x_0 \in\Omega$.
\end{prop}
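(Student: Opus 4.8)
The plan is to run the classical scaling-plus-strong-maximum-principle argument (as in \cite{QB}, \cite{BNV}), exploiting the homogeneity of $F$ from \eqref{HF} and the fact that one of $u,v$ is strong in order to subtract the two differential inequalities rigorously. Replacing $F$ by $G(x,r,p,X):=-F(x,-r,-p,-X)$ when needed (which, as observed before the statement, interchanges the two sets of inequalities), it suffices to treat the first case: $F[u]+c(x)u\geq 0$, $u<0$ in $\Omega$, and $F[v]+c(x)v\leq 0$ in $\Omega$, $v\geq 0$ on $\partial\Omega$, $v(x_0)<0$. First I would set $\Omega^-:=\{x\in\Omega:\,v(x)<0\}$, which is open and nonempty (it contains $x_0$), with $v=0$ on $\partial\Omega^-$ and $u\leq 0$ on $\partial\Omega^-\cap\partial\Omega$, and introduce the critical dilation $t^*:=\inf_{\Omega^-}(u/v)$. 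Finiteness is clear since $u(x_0)/v(x_0)$ is a finite positive number. Positivity: on compact subsets of $\Omega^-$ the quotient $u/v=(-u)/(-v)$ is bounded below by a positive constant; near $\partial\Omega^-\cap\Omega$ one has $-v\to 0$ while $-u$ stays bounded away from $0$, so $u/v\to+\infty$; and near a point of $\partial\Omega^-\cap\partial\Omega$ where both $u$ and $v$ vanish, Hopf's lemma (Theorem~\ref{Hopf}), applied to $-u$ in $\Omega$ and to $-v$ near that point, shows $u/v$ tends to a finite positive limit equal to a ratio of interior normal derivatives. Hence $t^*\in(0,+\infty)$, $w:=t^*v-u\geq 0$ on $\overline{\Omega^-}$, and $w(\bar x)=0$ at some $\bar x\in\overline{\Omega^-}$.

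The core step is to show that $w$ is an $L^p$-viscosity solution of the coercive linear inequality $\mathcal{M}^-_{\lambda,\Lambda}(D^2w)-b(x)|Dw|-\omega(1)\|d\|_{L^\infty(\Omega)}\,w\leq 0$ in $\Omega^-$. Since $F[t^*v]=t^*F[v]$ by homogeneity, the two differential inequalities give $F[u]-F[t^*v]\geq c(x)(t^*v-u)=c(x)w$, while \eqref{SC} (with $r=u$, $s=t^*v$, using $w\geq 0$) gives $F[u]-F[t^*v]\leq \mathcal{M}^+_{\lambda,\Lambda}(D^2u-t^*D^2v)+b|Du-t^*Dv|+d\,\omega(w)=-\mathcal{M}^-_{\lambda,\Lambda}(D^2w)+b|Dw|+d\,\omega(w)$. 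Combining these two bounds and discarding the favorable term $c(x)w\geq 0$, and using $\omega(w)\leq\omega(1)w$, yields the claimed inequality. The place where the hypothesis that one of $u,v$ is strong is essential is exactly here: it is needed to make the subtraction of a genuine viscosity inequality and an a.e.\ (strong) inequality legitimate as an $L^p$-viscosity inequality for $w$. If $u\in W^{2,p}_{\mathrm{loc}}$, one tests the viscosity supersolution $v$ against test functions obtained from $u$ plus a $W^{2,p}_{\mathrm{loc}}$ perturbation, as in the standard arguments of \cite{KSweakharnack}, \cite{arma2010}; if instead $v$ is strong, one argues symmetrically.

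Once this is in hand, I would apply the strong maximum principle (Theorem~\ref{SMP}) on $\Omega^-$ to the nonnegative function $w$: either $w\equiv 0$ in $\Omega^-$ or $w>0$ in $\Omega^-$. In the first case $u=t^*v$ on $\Omega^-$; then $\partial\Omega^-\cap\Omega$ must be empty, for on it $v=0$ would force $u=0$, contradicting $u<0$ in $\Omega$; hence $\Omega^-$ is open and closed in the connected set $\Omega$, so $\Omega^-=\Omega$, $v<0$ in $\Omega$, and $u=t^*v$ throughout, which is the conclusion with $t=t^*>0$. In the second case the contact point $\bar x$ cannot lie in $\partial\Omega^-\cap\Omega$ (there $w=-u>0$), so $\bar x\in\partial\Omega^-\cap\partial\Omega$, where $u(\bar x)=v(\bar x)=0$; fitting an interior ball contained in $\Omega^-$ and tangent to $\partial\Omega$ at $\bar x$ and applying Hopf (Theorem~\ref{Hopf}) to $w>0$ gives $\partial_\nu w(\bar x)>0$, whereas the definition of $t^*$ as an infimum of $u/v$, together with the normal-derivative computation already used, forces $\partial_\nu w(\bar x)=t^*\partial_\nu v(\bar x)-\partial_\nu u(\bar x)=0$, a contradiction. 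This excludes the second case. The remaining alternative (``$u>0$, $v\leq 0$ on $\partial\Omega$, $v(x_0)>0$'') follows from the $G$-reduction indicated at the outset.

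I expect the two main obstacles to be, first, the rigorous passage from the pair of (viscosity/strong) inequalities to the single coercive linear inequality for $w$: this is genuinely where the ``one of them is strong'' hypothesis cannot be dispensed with, and it has to be carried out within the $L^p$-viscosity theory for differences of sub/supersolutions with unbounded $b$ and $d$. Second, the boundary contact analysis when $w>0$: one must verify that the infimum defining $t^*$ is actually attained and that a tangent interior ball lying inside $\Omega^-$ is available at $\bar x$ so that Hopf applies (only $\Omega$ is $C^{1,1}$, not $\Omega^-$), and then match the interior normal derivatives of $u$ and $v$ at $\bar x$.
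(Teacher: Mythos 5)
Your proposal takes a genuinely different route from the paper. The paper never defines a critical scaling factor by $\inf_{\{v<0\}}(u/v)$ and never does a boundary contact analysis. Instead it works with $z_t:=tu-v$ and defines $\tau:=\inf\{t>0: z_t\leq 0\text{ in }\Omega\}$, establishing that this set is nonempty not via Hopf, but via the maximum principle for small domains (Lemma \ref{MP small domain}) on $\Omega\setminus K$, where $K\ni x_0$ is a fixed compact on which $z_{t_0}\leq 0$ is arranged by hand for a suitable $t_0>0$. The contradiction step --- if $z_\tau<0$ strictly then one may decrease $\tau$ --- again uses the small-domain MP on $\Omega\setminus K$ followed by SMP on $\Omega$, and Hopf is never invoked. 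This is a deliberate design choice borrowed from \cite{BNV}, \cite{QB}, \cite{Arms2009}: it lets the whole argument run with no information whatsoever on the geometry of the nodal set of $v$, and it does not require $u,v$ to be $C^1$ up to $\partial\Omega$.

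That matters because the two ``obstacles'' you flag at the end are genuine gaps, not routine cleanup. First, in your case $w>0$ with contact at $\bar x\in\partial\Omega$, you need Hopf for $w$ on $\Omega^-=\{v<0\}$, but Theorem \ref{Hopf} is stated for a $C^{1,1}$ domain and $\Omega^-$ is not one, nor is it clear that an interior ball tangent to $\partial\Omega$ at $\bar x$ fits inside $\Omega^-$. Moreover, the identification $t^*=\partial_\nu u(\bar x)/\partial_\nu v(\bar x)$, which your contradiction $\partial_\nu w(\bar x)=0$ relies on, requires $u,v$ to be $C^1$ up to $\partial\Omega$ near $\bar x$ and the limit $u/v\to t^*$ along a non-tangential sequence --- neither of which you establish, and neither of which is free under the hypotheses of the proposition ($u,v$ are $L^n$-viscosity solutions of inequalities, only one of them strong). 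The paper's $K$--and--small-domain device exists precisely to avoid this.

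Second, a smaller but real error: you discard ``the favorable term $c(x)w\geq 0$,'' but Proposition \ref{th4.1 QB} assumes only $c\in L^p(\Omega)$, with no sign condition, so $c(x)w$ has no sign. The fix is simply to keep it: the resulting inequality $\mathcal{M}^-(D^2w)-b|Dw|+(c-d\,\omega(1))w\leq 0$ is still of the form $\mathcal{L}^-[w]-\tilde dw\leq 0$ with $\tilde d\in L^p$ to which Theorem \ref{SMP} applies --- and this is exactly how the paper treats $-z_\tau$, where the zero-order coefficient $c-d\,\omega(1)$ is retained. Your derivation of the coercive inequality for $w$ from the homogeneity of $F$ and \eqref{SC}, and your identification of the ``one of them strong'' hypothesis as what makes the subtraction an honest $L^p$-viscosity inequality, do match the paper's mechanism for $z_t$.
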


For the proof of proposition \ref{th4.1 QB}, as in \cite{Arms2009}, \cite{BNV}, \cite{QB}, we need the following consequence of ABP, which is MP for small domains.

\begin{lem}\label{MP small domain}
Assume $F$ satisfies \eqref{SC} and $c\in L^p(\Omega)$, $p>n$. Then there exists $\varepsilon_0>0$, depending on $n,p,\lambda,\Lambda$, $\|b\|_{L^p(\Omega)}$, $\|c^+\|_{L^p(\Omega)}$ and $\mathrm{diam}(\Omega)$, such that if $|\Omega|\leq \varepsilon_0$ then any $u\in C(\overline{\Omega})$ which is an $L^n$-viscosity solution of
\begin{align}\label{MP small domain eq u}
\left\{
\begin{array}{rclcc}
F[u]+c(x)u &\geq & 0 &\mbox{in} &\Omega\\
 u &\leq & 0 &\mbox{on} & \partial\Omega
\end{array}
\right.
\end{align}
satisfies $u\leq 0$ in $\Omega$. Analogously, any $v\in C(\overline{\Omega})$ that is an $L^n$-viscosity solution of $F[v]+c(x)v\leq 0$ in $\Omega$, with $v\geq 0$ on $\partial\Omega$, is such that $v\geq 0$ in $\Omega$ provided $|\Omega|\leq \varepsilon_0$.
\end{lem}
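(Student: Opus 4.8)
The plan is to reduce the statement to the ABP estimate of Proposition \ref{wABPquad}, exploiting the fact that $c$ is only in $L^p$ but $p>n$, so that the $L^p$-norm of $c$ over a small-measure set can be made small. First I would treat the subsolution case, since the supersolution case is entirely symmetric (or follows by applying the subsolution case to $G(x,r,p,X):=-F(x,-r,-p,-X)$, which satisfies \eqref{SC} with the same constants, and to $-v$). Let $u$ be an $L^n$-viscosity solution of \eqref{MP small domain eq u}. By the structure condition \eqref{SC} with $s=q=Y=0$ and $F(\cdot,0,0,0)\equiv 0$, and using $d(x)\,\omega((u)^+)\ge 0$ only on the set where we need it, $u^+$ satisfies (in the $L^n$-viscosity sense on the open set $\{u>0\}$, which has boundary contained in $\partial\Omega\cup\{u=0\}$ where $u^+=0$) the differential inequality
\begin{align*}
\mathcal{M}^+_{\lambda,\Lambda}(D^2 u) + b(x)|Du| \;\geq\; -c(x)u \;\geq\; -c^+(x)\,u^+ \quad \textrm{in } \Omega\cap\{u>0\}.
\end{align*}
Hence $\mathcal{L}^+[u]\ge -c^+(x)u^+$ there, with right-hand side $g(x):=-c^+(x)u^+(x)\in L^p$ supported in $\{u>0\}$.

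Next I would apply the ABP estimate (Proposition \ref{wABPquad} with $\mu=0$, which gives a constant $C_A=C_A(n,p,\lambda,\Lambda,\|b\|_{L^p(\Omega)},\mathrm{diam}(\Omega))$) on the set where $u>0$, using that $u\le 0$ on $\partial\Omega$, so that $\max_{\partial(\Omega\cap\{u>0\})}u^+=0$. This yields
\begin{align*}
\max_{\overline{\Omega}} u^+ \;\leq\; C_A\,\|c^+ u^+\|_{L^p(\Omega)} \;\leq\; C_A\,\Big(\int_{\Omega}(c^+)^{p}\Big)^{1/p}\,\max_{\overline{\Omega}}u^+ .
\end{align*}
Now comes the one genuinely quantitative point: by absolute continuity of the integral (or Hölder, writing $\|c^+\|_{L^p(\Omega)}\le \|c^+\|_{L^{p'}(\Omega)}|\Omega|^{1/p-1/p'}$ for any fixed $p'>p$ if one prefers a clean bound), there exists $\varepsilon_0>0$, depending only on $n,p,\lambda,\Lambda,\|b\|_{L^p(\Omega)},\|c^+\|_{L^p(\Omega)}$ and $\mathrm{diam}(\Omega)$, such that $|\Omega|\le\varepsilon_0$ forces $C_A\,\|c^+\|_{L^p(\Omega)}\le \tfrac12$. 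With such $\varepsilon_0$ the displayed inequality gives $\max_{\overline\Omega}u^+\le\tfrac12\max_{\overline\Omega}u^+$, hence $u^+\equiv 0$, i.e. $u\le 0$ in $\Omega$, as claimed. The symmetric argument applied to $-v$ (equivalently to $G$) gives $v\ge 0$ when $F[v]+c(x)v\le 0$ and $v\ge 0$ on $\partial\Omega$.

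I expect the main (really the only) subtlety to be the careful bookkeeping with the $L^p$-viscosity / $L^n$-viscosity framework: one must check that $u^+$ is an $L^n$-viscosity subsolution of $\mathcal{L}^+[\cdot]\ge -c^+u^+$ on $\Omega\cap\{u>0\}$, which follows from the fact that testing $u-\phi$ at an interior maximum with $u(x)>0$ is the same as testing $u^+-\phi$, combined with \eqref{SC} to drop the zero-order term (using $\omega$ increasing, $\omega((u)^+)\ge 0$, and $d\ge 0$). The dependence of $C_A$ on $\|b\|_{L^p}$ rather than $\|b\|_{L^n}$ is exactly what Proposition \ref{wABPquad} provides for $q=p>n$, so no extra work is needed there; and since $\mu=0$ here, the ABP constant does not depend on any a priori bound on $u$. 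Everything else is a routine application of already-established results.
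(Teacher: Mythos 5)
Your overall strategy (contradiction, reduce to the Pucci inequality on $\{u>0\}$, apply ABP, absorb the term) matches the paper exactly, and the bookkeeping with the $L^n$-viscosity framework and \eqref{SC} is handled correctly. However, the final quantitative step has a genuine gap. You apply ABP in the form $\max_{\overline\Omega}u^+\le C_A\|c^+u^+\|_{L^p(\Omega)}\le C_A\|c^+\|_{L^p(\Omega)}\max_{\overline\Omega}u^+$ and then claim that $|\Omega|\le\varepsilon_0$ can force $C_A\|c^+\|_{L^p(\Omega)}\le\tfrac12$, where $\varepsilon_0$ is allowed to depend only on the listed quantities \emph{including} $\|c^+\|_{L^p(\Omega)}$. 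But with $\|c^+\|_{L^p(\Omega)}$ itself a fixed parameter of $\varepsilon_0$, and $C_A$ bounded in terms of $\mathrm{diam}(\Omega)$ (not of $|\Omega|$), there is simply no quantity left in that inequality that shrinks with $|\Omega|$, so no choice of $\varepsilon_0$ works. Your suggested Hölder fallback, $\|c^+\|_{L^p(\Omega)}\le\|c^+\|_{L^{p'}(\Omega)}|\Omega|^{1/p-1/p'}$ with $p'>p$, is also not available: $c$ is only assumed to lie in $L^p$, so $\|c^+\|_{L^{p'}(\Omega)}$ may be infinite. And the ``absolute continuity of the integral'' argument fails for the same reason the first fails — the rate at which $\int_\Omega(c^+)^p$ shrinks with $|\Omega|$ is not controlled by $\|c^+\|_{L^p(\Omega)}$ alone.

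The fix, which is precisely what the paper's proof does, is to run ABP with the $L^n$ norm of the right-hand side (permissible here since $p\ge n$, so an $L^p$-viscosity solution is in particular an $L^n$-viscosity one and the relevant $L^n$-form of ABP applies), obtaining
\begin{align*}
\sup_{\Omega^+}u\;\le\;C\,\mathrm{diam}(\Omega)\,\|c^+\|_{L^n(\Omega)}\sup_{\Omega^+}u,
\end{align*}
and then apply H\"older in the correct direction, from the smaller exponent $n$ up to the given $p>n$:
\begin{align*}
\|c^+\|_{L^n(\Omega)}\;\le\;|\Omega|^{\frac1n-\frac1p}\,\|c^+\|_{L^p(\Omega)}.
\end{align*}
Now the factor $|\Omega|^{1/n-1/p}$ has a strictly positive exponent and tends to $0$ as $|\Omega|\to0$, so one may choose $\varepsilon_0$ depending only on $n,p,\lambda,\Lambda,\|b\|_{L^p(\Omega)},\|c^+\|_{L^p(\Omega)},\mathrm{diam}(\Omega)$ so that the prefactor is at most $\tfrac12$, and conclude $\sup u^+=0$. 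Everything else in your write-up is sound.
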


\begin{proof}
Assume $u$ satisfies \eqref{MP small domain eq u}. In order to obtain a contradiction, suppose that $\Omega^+:=\{u>0\}$ is not empty. By \eqref{SC}, we have that $u$ is an $L^n$-viscosity solution of
\begin{center}
$\mathcal{M}^+(D^2u)+b(x)|Du|\geq \mathcal{M}^+(D^2u)+b(x)|Du|-c^-(x)u\geq -c^+(x)u$ \; in $\Omega^+$.
\end{center}
Hence, ABP gives us that
$$\sup_{\Omega^+}u\leq C_1 \,\textrm{diam}(\Omega)\, \|c^+\|_{L^n(\Omega)}\sup_{\Omega^+}u \leq C_1 \,\textrm{diam}(\Omega)\, |\Omega|^{1-\frac{n}{p}}\|c^+\|_{L^p(\Omega)}\sup_{\Omega^+}u. $$
Then we choose $\epsilon_0>0$ such that $C_1 \,\textrm{diam}(\Omega)\, \varepsilon_0^{1-\frac{n}{p}}\|c^+\|_{L^p(\Omega)}\leq 1/2$ to produce a contradiction. If $v$ is a supersolution it is similar, by using ABP in the opposite direction.
\qedhere{\,\textit{Lemma \ref{MP small domain}.}}
\end{proof}

\begin{proof}[\textit{Proof of Proposition \ref{th4.1 QB}.}]
We are going to prove the first case, since the second is analogous.
Let $u,v$ be $L^n$-viscosity solutions of \eqref{eq th1.4 2case ineq}. Say both are strong, otherwise just use test functions for one of them and read all inequalities below in the $L^n$-viscosity sense.
Set $z_t:=tu-v$ for $t>0$. Then, using 1-homogeneity and \eqref{SC}, we have that $z_t$ is a solution of
\begin{align}\label{eq th 1.4 sem DF}
\mathcal{M}^+(D^2 z_t)&+b(x)|Dz_t|+d(x)\omega((-z_t)^+)+c(x)z_t \geq F[tu]-F[v]+c(x)z_t \nonumber\\
&=t\,\{F[u]+c(x)u\}-\{F[v]+c(x)v\}\geq 0 \;\;\textrm{in }\Omega\, .
\end{align}

Let $K$ be a compact subset of $\Omega$ such that $x_0\in K$ and MP lemma \ref{MP small domain} holds for $\Omega\setminus K$. Further, let $t_0>0$ be large enough such that $z_{t_0}\leq 0$ in $K$. In fact, this $t_0$ can be taken as $\min_K v / \max_K u >0$, since $u<0$ in $K$ and $\min_K v\leq v(x_0)<0$.
Then, since $z_{t_0}\leq 0$ in $\partial\, (\Omega\setminus K)\subset \partial\Omega\cup\partial K$, we obtain from lemma \ref{MP small domain} that $z_{t_0}\leq 0$ in $\Omega\setminus K$ and so in $\Omega$.

Define $\tau:=\inf \{t>0;\;z_t\leq 0\;\textrm{ in }\Omega\}\geq t_0>0$.
Hence, using \eqref{SC}, we have that $z_\tau$ is a nonpositive solution of
$
\mathcal{L}^-[-z_\tau]+\{c(x)-d(x)\,\omega (1)\}(-z_\tau) \leq  0 \;\;\textrm{ in } \Omega
$
and so by SMP we have either $z_\tau\equiv 0$ or $z_\tau <0$ in $\Omega$. In the first case we are done. Suppose, then, $z_\tau <0$ in $\Omega$ in order to obtain a contradiction.

Next we choose some $\varepsilon >0$ such that $z_{\tau-\varepsilon}<0$ in $K$. Indeed, we can take, for example, $\varepsilon =\min \{-\min_K z_\tau / (2\|u\|_{L^\infty (K)}), \tau/2 \}$, which implies, as in \cite{Patrizi},
$$z_{\tau -\varepsilon}=z_\tau -\varepsilon u\leq \min_K z_\tau +\varepsilon\|u\|_{L^\infty (K)}<0 \textrm{ in }K.$$
In particular, $z_t$ satisfies \eqref{eq th 1.4 sem DF} for $t=\tau -\varepsilon >0$ . Thus, $z_{\tau-\varepsilon}\leq 0$ by MP in $\Omega \setminus K$.  By SMP, $z_{\tau-\varepsilon}<0$ in $\Omega$, which contradicts the definition of $\tau$ as an infimum.
\qedhere{\textit{Proposition \ref{th4.1 QB}. }}
\end{proof}

The next result was first introduced in \cite{BNV} and extended in \cite{QB} to nonlinear operators. When we add an unbounded weight $c$, all we need is its positiveness on a subset of positive measure in order to obtain a bound from above on $\lambda_1$.

\begin{lem} \label{boundedness eig QB}
Suppose \eqref{HF} with $b,\, d\in L_+^\infty (\Omega)$. If $c\geq \delta>0$ a.e. in $B_R\subset\subset\Omega$, for $R\leq 1$, then
$$\lambda_1^\pm (F(c),\Omega)\leq\frac{C_0}{\delta R^2}$$
for a positive constant $C_0$ that depends on $n,\lambda,\Lambda,R,\|b\|_{L^\infty(\Omega)} $ and $\omega (1)\|d\|_{L^\infty(\Omega)}$.

If, moreover, $F$ has no term of order zero (i.e. $d$ or $\omega$ is equal to zero), then $R$ can be any positive number. On the other hand, if $b\equiv 0$, then $C_0$ does not depend on $R$.
\end{lem}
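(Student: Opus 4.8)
The plan is to exhibit an explicit test function in $\Psi^\pm(F(c),\Omega,\lambda)$ that fails to exist once $\lambda$ is too large, thereby bounding $\lambda_1^\pm$ from above. By the symmetry relation $\lambda_1^\pm(G(c),\Omega)=\lambda_1^\mp(F(c),\Omega)$ with $G(x,r,p,X)=-F(x,-r,-p,-X)$, it suffices to treat, say, $\lambda_1^+$. Recall that $\psi\in\Psi^+(F(c),\Omega,\lambda)$ means $\psi>0$ in $\Omega$ and $F[\psi]+\lambda c(x)\psi\le 0$ in $\Omega$ in the $L^n$-viscosity sense. The idea, going back to \cite{BNV} and \cite{QB}, is to localize: if such a $\psi$ exists, then in particular $\psi>0$ in $\overline{B_R}$, and there (using \eqref{SC} to absorb the first and zero order terms) $\psi$ is an $L^n$-viscosity supersolution of $\mathcal{M}^-_{\lambda,\Lambda}(D^2\psi)-\|b\|_{L^\infty(\Omega)}|D\psi|-\omega(1)\|d\|_{L^\infty(\Omega)}\,\psi+\lambda c(x)\psi\le 0$, i.e.\ of $\mathcal{M}^-_{\lambda,\Lambda}(D^2\psi)-\|b\|_{L^\infty}|D\psi|+(\lambda\delta-\omega(1)\|d\|_{L^\infty})\psi\le 0$ in $B_R$.

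First I would produce, for the \emph{linear-type} extremal operator $L_-[\varphi]:=\mathcal{M}^-_{\lambda,\Lambda}(D^2\varphi)-\|b\|_{L^\infty}|D\varphi|$ on the ball $B_R$, a positive eigenfunction: there is a radial $\varphi\in C^2(\overline{B_R})$ (or a classical construction via the first eigenfunction of the corresponding radial ODE), $\varphi>0$ in $B_R$, $\varphi=0$ on $\partial B_R$, with $L_-[\varphi]+\Lambda_R\,\varphi=0$ in $B_R$, where $\Lambda_R=\Lambda_1(L_-,B_R)>0$ is the principal eigenvalue of $L_-$ on $B_R$; by scaling, $\Lambda_R= C_1/R^2$ for a constant $C_1=C_1(n,\lambda,\Lambda,\|b\|_{L^\infty(\Omega)} R)$, and when $b\equiv0$ one has exactly $\Lambda_R=c_{n,\lambda,\Lambda}/R^2$ with no $R$-dependence hidden in $C_1$. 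Then I would run the standard sliding/comparison argument: suppose $\psi\in\Psi^+(F(c),\Omega,\lambda)$ and $\lambda\delta-\omega(1)\|d\|_{L^\infty}>\Lambda_R$. Consider $t\varphi$ for $t>0$; since $\psi>0$ in the compact set $\overline{B_R}$ and $\varphi$ vanishes on $\partial B_R$, the set $\{t>0:\ t\varphi\le\psi\text{ in }B_R\}$ is a nonempty closed half-line, so its supremum $t^*\in(0,\infty)$ is attained and $z:=\psi-t^*\varphi\ge 0$ touches $0$ at some interior point of $B_R$. But $z$ is an $L^n$-viscosity supersolution of $\mathcal{M}^-_{\lambda,\Lambda}(D^2 z)-\|b\|_{L^\infty}|Dz|+(\lambda\delta-\omega(1)\|d\|_{L^\infty})z\le 0$ minus the term $t^*\big[(\lambda\delta-\omega(1)\|d\|_{L^\infty})-\Lambda_R\big]\varphi\le 0$; hence $\mathcal{L}^-[z]-\tilde d\, z\le 0$ with $z\ge0$ for a coercive operator, so by the strong maximum principle (Theorem \ref{SMP}) $z\equiv 0$ in $B_R$, i.e.\ $\psi\equiv t^*\varphi$ in $B_R$, contradicting $\psi>0$ on $\partial B_R$ while $\varphi=0$ there. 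Therefore every $\lambda$ with $\Psi^+\neq\emptyset$ satisfies $\lambda\delta-\omega(1)\|d\|_{L^\infty}\le\Lambda_R=C_1/R^2$, whence $\lambda_1^+(F(c),\Omega)\le \big(C_1/R^2+\omega(1)\|d\|_{L^\infty(\Omega)}\big)/\delta\le C_0/(\delta R^2)$ after enlarging the constant (using $R\le1$ to absorb the $\omega(1)\|d\|_{L^\infty}$ term into $C_1/R^2$), with $C_0=C_0(n,\lambda,\Lambda,R,\|b\|_{L^\infty(\Omega)},\omega(1)\|d\|_{L^\infty(\Omega)})$ as claimed.

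For the three refinements: if $F$ has no zeroth-order term ($d\equiv0$ or $\omega\equiv0$), the inequality becomes $\lambda\delta\le\Lambda_R=C_1/R^2$ with no need to absorb $\omega(1)\|d\|_{L^\infty}$, so $R$ may be any positive number; if $b\equiv0$, then $L_-=\mathcal{M}^-_{\lambda,\Lambda}(D^2\cdot)$ and the principal eigenvalue on $B_R$ is exactly $c_{n,\lambda,\Lambda}/R^2$ with $c_{n,\lambda,\Lambda}$ independent of $R$, so $C_0$ does not depend on $R$. The main obstacle is the clean construction of the comparison eigenfunction $\varphi$ for $L_-$ on the ball with the correct $R^{-2}$ scaling and with its dependence on $\|b\|_{L^\infty(\Omega)}$ tracked precisely; this is classical (a one-dimensional ODE eigenvalue problem for the radial profile, as in \cite{BNV}, \cite{QB}) but one must check that the resulting $\varphi$ is an admissible ($C^2$, hence $L^n$-viscosity) subsolution of the comparison inequality and that the touching-point argument is legitimate for $L^n$-viscosity solutions with unbounded $c$ — which it is, since on $B_R$ we have reduced to a coercive operator with bounded coefficients plus the controlled weight term $\lambda c(x)z$ with $z\ge0$, allowing SMP (Theorem \ref{SMP}) to apply after splitting $c=c^+-c^-$ as in the remark following it.
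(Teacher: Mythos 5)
Your proposal is correct in outline but takes a genuinely different route from the paper's, and it leaves one real gap that the paper deliberately sidesteps. The paper works with $\lambda_1^-$, writes down the explicit polynomial $\sigma(x) = -(R^2-|x|^2)^2$, and verifies by a direct pointwise computation (using \eqref{SC} with $\gamma=\|b\|_\infty$, $\eta=\|d\|_\infty$ and a two-case estimate of $F[\sigma]/\sigma$, where the case $|x|^2\ge\alpha R^2$ is where $R\le 1$ is used) that $F[\sigma]+\tfrac{C_0}{\delta R^2}c(x)\,\sigma\le 0$ a.e.\ in $B_R$; it then invokes Proposition \ref{th4.1 QB} to force any admissible $\psi$ for a larger $\lambda$ to be a multiple of $\sigma$, which is impossible on $\partial B_R$. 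You instead pass to $\lambda_1^+$, reduce the differential inequality for $\psi$ on $B_R$ to one for the pure extremal operator $\mathcal{M}^-_{\lambda,\Lambda}-\|b\|_\infty|D\cdot|$ via \eqref{SC} (which works, given the Lipschitz convention on $\omega$ in this section), then compare against a principal eigenfunction $\varphi$ of that extremal operator on $B_R$ by a sliding argument terminated by SMP. The sliding step is sound (superadditivity of $\mathcal{M}^-$, $W^{2,p}$ regularity of $\varphi$ so it is an admissible correction, and the nonproper SMP remark to absorb the sign of $\lambda\delta-\omega(1)\|d\|_\infty$), and it essentially re-proves the special case of Proposition \ref{th4.1 QB} that the paper cites.

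The gap, which you yourself flag, is the construction of the radial principal eigenpair $(\varphi,\Lambda_R)$ for $\mathcal{M}^-_{\lambda,\Lambda}-\|b\|_\infty|D\cdot|$ on $B_R$, with the scaling $\Lambda_R=C_1(n,\lambda,\Lambda,\|b\|_\infty R)/R^2$ tracked precisely (from which both refinements follow, as you correctly note). This is classical --- a radial ODE eigenvalue problem, available in the Quaas--Sirakov line of work cited as \cite{QB} --- but it is a genuine external dependency that the paper deliberately avoids by taking the explicit $C^\infty$ barrier $\sigma$ in place of $\varphi$: one then only has to check a concrete differential inequality rather than prove an eigenfunction existence result, which is cleaner within a paper that is itself in the business of establishing eigenpair existence. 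To make your version fully rigorous, either cite the Pucci-on-a-ball eigenvalue construction with its scaling explicitly, or replace $\varphi$ by an explicit barrier such as $\sigma$ --- the latter is exactly what the paper does, and after that substitution the two arguments merge.
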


\begin{proof}
Observe that $\lambda_1^\pm(F(c),\Omega) \leq \lambda_1^\pm (F(c),B_R)$ by definition.

Consider, as in \cite{BNV} and \cite{QB}, the radial function $\sigma (x):=-(R^2-|x|^2)^2<0$ in $B_R$.
Let us treat the case of $\lambda_1^-$, since for $\lambda_1^+$ it is just a question of looking at $-\sigma$.

Suppose, in order to obtain a contradiction, that there exists some $\lambda>\frac{C_0}{\delta R^2}$ such that $\Psi^-(F(c),\Omega,\lambda)\neq\emptyset$, i.e. let $\psi\in C(\overline{\Omega})$ be a negative $L^n$-viscosity solution of $F[\psi]+\lambda c(x)\psi\geq 0$ in $\Omega$; also of $F[\psi]+\frac{C_0}{\delta R^2} c(x)\psi\geq 0$ in $B_R$.

\begin{claim} \label{claim sigma}
We have $F[\sigma]+ \frac{C_0}{\delta R^2} c(x)\, \sigma \leq 0$ a.e. in $B_R$.
\end{claim}

\begin{proof}
Say, for example, $b(x)\leq\gamma$ and $d(x)\leq \eta$ a.e., then it holds (see \cite{BNV} or \cite{tese})
\begin{align*}
\frac{F[\sigma]}{\sigma}\geq \frac{8\lambda\, |x|^2}{(R^2-|x|^2)^2} -\frac{4n\Lambda}{R^2-|x|^2}-\frac{4\gamma R}{R^2-|x|^2}-\eta \,\omega (1) \;\textrm{ a.e. in } B_R\, .
\end{align*}
Hence, if we take
$\alpha=({n\Lambda +\gamma R})(2\lambda+n\Lambda+\gamma R)^{-1}\in (0,1)$,
we have two cases.
\vspace{0.1cm}

$(a)$ $|x|^2\geq \alpha R^2$: From construction,
${F[\sigma]}/{\sigma}\geq - \eta\, \omega (1)\geq-{\eta\, \omega (1)}\,c(x)/({\delta R^2} ).$

$(b)$ $|x|^2\leq \alpha R^2$: In this case we just bound the first term by zero; the others are such that
${F[\sigma]}/{\sigma}\geq - {4(n\Lambda +\gamma R)}/({(1-\alpha)R^2}) - \eta\, \omega (1)\geq - {C_0}\,c(x)/({\delta R^2}).$
\qedhere{\textit{Claim \ref{claim sigma}.}}
\end{proof}

Now we apply proposition \ref{th4.1 QB}, since $\sigma\in C^2(\overline{B}_R)$, obtaining that $\psi = t\sigma$, for some $t>0$. However, this is not possible, since $\psi <0$ on $\partial B_R\subset\Omega$ while $\sigma =0$ on $\partial B_R\,$.
\qedhere{\textit{Lemma \ref{boundedness eig QB}.} }
\end{proof}

Moving to the last statement in theorem \ref{exist eig for F c geq 0}, we first prove an eigenvalue bound that takes into account an unbounded $b$, when the weight is a continuous and positive function in $\overline{\Omega}$.
Note that, in this case, theorem \ref{KRquaas} gives us a pair $\alpha_1>0$ and $\varphi_1\in C^{1}(\overline{\Omega})$ such that
\begin{align} \label{eq exist eigen G 1}
\left\{
\begin{array}{rclcc}
G[\varphi_1]+\alpha_1 \,c(x) \varphi_1 &= &0  &\mbox{in} & \Omega \\
\varphi_1 &> &0  &\mbox{in} &\Omega \\
\varphi_1 &=& 0 &\mbox{on} &\partial\Omega
\end{array}
\right.
\end{align}
in the $L ^n$-viscosity sense, with $\max_{\overline{\Omega}}\,\varphi_1=1$ and $0<\alpha_1\leq \lambda_1^+(G(c),\Omega)=\lambda_1^-(F(c),\Omega).$

The following lemma is a delicate point in our construction of an eigenpair. It states that $\alpha_1$ in \eqref{eq exist eigen G 1} is bounded, and this does not seem to be a consequence of the usual methods for bounding a first eigenvalue, such as the one in lemma \ref{boundedness eig QB}. Instead, we use the classical blow-up method \cite{GS} of Gidas and Spruck.

\begin{lem}\label{limit unbounded b cont c}
Let $c\in C(\overline{\Omega})$, $c>0$ in $\overline{\Omega}$ and $G$ satisfying \eqref{HF} with $b\in L^p_+(\Omega)$, $d\in L_+^\infty (\Omega)$. Let $\alpha_1$ and $\varphi_1$ as in \eqref{eq exist eigen G 1}. Then $\alpha_1\leq C$, for $C=C(n,\lambda,\Lambda,\Omega, \|b\|_{L^p(\Omega)},\omega (1)\|d\|_{L^\infty(\Omega)})$. 
\end{lem}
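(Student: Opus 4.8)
The plan is to argue by contradiction using the Gidas--Spruck blow-up method. Suppose there is a sequence of operators $G_k$ satisfying \eqref{HF} with coefficients $b_k$, $d_k$ uniformly bounded in the relevant norms (i.e.\ $\|b_k\|_{L^p(\Omega)}$ bounded, $\omega_k(1)\|d_k\|_{L^\infty(\Omega)}$ bounded), weights $c_k\in C(\overline{\Omega})$ with $c_k>0$, together with eigenpairs $\alpha_1^k$, $\varphi_1^k$ as in \eqref{eq exist eigen G 1}, such that $\alpha_1^k\to\infty$. Actually, since we only need the bound for one fixed $G$, it is cleaner to fix $G$, $c$ and suppose for contradiction that the statement fails; but to run a genuine blow-up we must let the base point and the scale vary. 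So first I would normalize: $\max_{\overline\Omega}\varphi_1=1$, pick $x_k\in\overline\Omega$ with $\varphi_1(x_k)$ close to the maximum of the rescaled functions, and set $\mu_k:=\alpha_1^{-1/2}\to 0$ playing the role of the blow-up parameter. The idea is that the equation $G[\varphi_1]+\alpha_1 c(x)\varphi_1=0$ rescales, under $y\mapsto x_k+\mu_k y$ and $w_k(y):=\varphi_1(x_k+\mu_k y)$, into $G_k[w_k]+\tilde c_k(y)w_k=0$ where the new operators $G_k$ have the $\mu$-coefficient multiplied by $\mu_k\to0$, the $b$-coefficient multiplied by $\mu_k$ (so $L^p$-norm $\to0$ since $p>n$), the $d$-coefficient by $\mu_k^2$, and $\tilde c_k\to c(x_\infty)>0$ a constant.

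The key steps, in order, would be: (1) Perform the rescaling carefully, exactly as in Claim~\ref{claim C1,alpha local 1a mud.var.} of the local regularity proof, tracking how $(SC)$-type coefficients transform; the crucial point is that $\alpha_1\mu_k^2=1$ so the zero-order term stays $O(1)$ while all genuinely nonlinear or unbounded coefficients collapse. (2) Apply the uniform $C^{1,\alpha}$ estimate of Theorem~\ref{C1,alpha regularity estimates geral} (local version, and the boundary version near $\partial\Omega$ after flattening, noting $\varphi_1=0$ there) together with $C^\beta$ estimates from Proposition~\ref{Cbetaquad} to get $w_k$ bounded in $C^{1,\alpha}$ on compact sets; since $\|w_k\|_\infty\le 1$ and, by choice of $x_k$, $w_k(0)$ bounded away from $0$, extract by Arzel\`a--Ascoli and Proposition~\ref{Lpquad} (stability) a nonzero limit $w_\infty\ge0$ solving $F_\infty(D^2w_\infty)+c(x_\infty)w_\infty=0$ on $\mathbb R^n$ (interior-blow-up case) or on a half-space with zero boundary data (boundary-blow-up case), where $F_\infty$ is a constant-coefficient uniformly elliptic operator. (3) Invoke a Liouville-type theorem: a bounded nonnegative viscosity solution of $F_\infty(D^2w)+c_\infty w=0$ with $c_\infty>0$ on $\mathbb R^n$ (resp.\ on a half-space with zero boundary values) must vanish identically --- this is where one uses that $c_\infty>0$ is a constant, via a barrier/maximum-principle argument on large balls, contradicting $w_\infty\not\equiv0$.

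The main obstacle I expect is step (3), namely having a clean Liouville theorem available for the limit problem in the fully nonlinear setting. For constant-coefficient $F_\infty$ and constant $c_\infty>0$ one wants: no bounded positive solution of $F_\infty(D^2w)=-c_\infty w$ exists on all of $\mathbb R^n$. One way is to test against the first Dirichlet eigenfunction of $\mathcal M^-_{\lambda,\Lambda}$ on $B_R$: for $R$ large the principal eigenvalue $\lambda_1^-(\mathcal M^-,B_R)\to 0$, so $\mathcal M^-(D^2 w)\le F_\infty(D^2w)=-c_\infty w<0$ forces, via comparison with a rescaled eigenfunction, $w$ to be negative somewhere once $c_\infty>\lambda_1^-(\mathcal M^-,B_R)$ — contradiction. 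The boundary-blow-up variant needs the analogous statement on a half-space, handled by the same eigenvalue-on-large-balls device since a half-space contains arbitrarily large balls. A secondary technical point is ensuring the blow-up centers $x_k$ are chosen so that, after rescaling, either the whole space or a half-space is obtained in the limit (standard: compare the distance $\mathrm{dist}(x_k,\partial\Omega)$ with $\mu_k$), and that the flattening diffeomorphism near $\partial\Omega$ converges to the identity so the limit domain is a genuine half-space. Once these are in place the contradiction closes and the bound $\alpha_1\le C$ follows with the stated dependence on $n,\lambda,\Lambda,\Omega,\|b\|_{L^p(\Omega)},\omega(1)\|d\|_{L^\infty(\Omega)}$.
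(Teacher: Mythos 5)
Your proposal follows essentially the same route as the paper: blow up at the maximum point with scale $\alpha_1^{-1/2}$, use the uniform interior and boundary $C^{1,\alpha}$ estimates of Theorem~\ref{C1,alpha regularity estimates geral} (applicable since the rescaled first- and zero-order coefficients collapse while $\|b_k\|_{L^p}$ stays controlled) plus stability (Proposition~\ref{Lpquad} and Remark~\ref{Lpquadencaixados}) to extract a bounded positive limit solving a constant-coefficient equation $J(D^2\psi)+c(x_\infty)\psi=0$ in $\mathbb{R}^n$ or a half-space, then track $\mathrm{dist}(x_k,\partial\Omega)/r_k$ to decide the limit domain. The "Liouville theorem" you flag as the main obstacle is exactly what the paper's Lemma~\ref{boundedness eig QB} supplies (the bound $\lambda_1^+\le C_0/(\delta R^2)$ on balls, which tends to $0$ as $R\to\infty$, contradicting $1\le\lambda_1^+$ furnished by the positive limit function); your proposed eigenvalue-on-large-balls device is the same mechanism in a slightly different wrapper, so there is no real gap.
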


\begin{proof}
If the conclusion is not true, then exists a sequence $b_k\in L^\infty_+ (\Omega)$, with $\|b_k\|_{L^p(\Omega)}\leq C$, $\|b_k\|_{L^\infty (\Omega)}\rightarrow +\infty$ and the respective eigenvalue problem
\begin{align} \label{eq exist eigen Gk c epsilon}
\left\{
\begin{array}{rclcc}
G_k\,[\varphi_k]+\alpha_1^k \,c(x) \varphi_k &= &0  &\mbox{in} & \Omega \\
\varphi_k &> &0  &\mbox{in} &\Omega \\
\varphi_k &=& 0 &\mbox{on} &\partial\Omega
\end{array}
\right.
\end{align}
in the $L^n$-viscosity sense, with $\max_{\overline{\Omega}}\,\varphi_k=1$ for all $k\in\n$ and $\alpha_1^k\rightarrow +\infty$ as $k\rightarrow +\infty$, where $G_k$ is a fully nonlinear operator satisfying $(H)_k\,$, i.e. \eqref{HF} for $b_k$ and $d_k$. Say $d_k\leq \eta$ and $\max_{\overline{\Omega}}\varphi_k=\varphi_k(x_0^k)$ for $x_0^k\in\Omega$. Then, $x_0^k\rightarrow x_0\in \overline{\Omega}$ as $k\rightarrow +\infty$, up to a subsequence.

\vspace{0.1cm}
\textit{Case 1:} $x_0\in\Omega$. Let $2\rho=\mathrm{dist}(x_0,\partial\Omega)>0$ and notice that $x_0^k\in B_\rho (x_0)$ for all $k\geq k_0$.
Set $r_k={(\alpha_1^k)}^{-{1}/{2}}$ and define $\psi_k(x)=\varphi_k(x_0^k+r_k x)$. Thus, $\psi_k$ is an $L^n$ (so $L^p$) viscosity solution of
\begin{align*}
\widetilde{G}_k(x,\psi_k,D\psi_k,D^2\psi_k)+c_k (x)\psi_k(x)=0 \quad \textrm{ in }\; \widetilde{B}_k:=B_{\rho/{r_k}}(0)
\end{align*}
where ${c}_k (x):=c(x_0^k+r_k x)$, $\widetilde{G}_k(x,r,p,X):=r^2_k\, G_k(x_0^k+r_k x,r,p/{r_k},X/{r_k^2})$ satisfies $(\widetilde{H})_k$, i.e. \eqref{HF} for $\widetilde{b_k}$ and $\eta_k$, where $\widetilde{b}_k(x):=r_k \,b_k(x_0^k+r_k x) $ and ${\eta}_k=r_k^2 \,\eta$. Notice that $b_k$ and $\eta_k$ converge locally to zero in $L^p(\widetilde{B}_k)$ as $k\rightarrow +\infty$, since $p>n$.

Furthermore, $\sup_{\widetilde{B}_k}\psi_k=\psi_k(0)=1$ for all $k\in\n$ and $B_R(0)\subset\subset \widetilde{B}_k$ for large $k$, for  any fixed $R>0$.
By theorem \ref{C1,alpha regularity estimates geral} we have that $\psi_k$ is locally in $C^{1,\alpha}$ and satisfies the estimate
\begin{align*}
\|\psi_k\|_{C^{1,\alpha}(\overline{B}_R(0))}\leq C_k \|\psi_k\|_{L^\infty (\widetilde{B_k})}\leq C,
\end{align*}
since $\psi_k$ attains its maximum at $0$ and $C_k$ depends only on the $L^p$-norm of the coefficients $b_k$ and $c_k$, which are uniformly bounded in there.
Hence, by compact inclusion we have that there exists $\psi\in C^1(\overline{B}_R(0))$ such that $\psi_k\rightarrow \psi$ as $k\rightarrow +\infty$, up to a subsequence. Doing the same for each ball $B_R(0)$, for every $R>0$, we obtain in particular that $\psi_k\rightarrow \psi$ in $L^{\infty}_{\mathrm{loc}}(\rn)$, by using the uniqueness of the limit for $\psi_k$ in the smaller balls.

Using stability (proposition \ref{Lpquad} together with observation \ref{Lpquadencaixados}) and the continuity of $c$, we have that $\psi$ is an $L^p$-viscosity solution of $J(x,D^2\psi)+{c} (x_0)\psi=0$ in $\rn$ for some measurable operator $J$ still satisfying \eqref{HF} with coefficients of zero and first order term, $d$ and $b$, equal to zero.
Also,  $\psi(0)=1$ and $\psi>0$ in $\rn$ by SMP.
This implies that $1\leq \lambda_1^+ (J({c(x_0)}),B_R)\leq \frac{C_0}{c(x_0)R^2}$ for all $R>0$, which gives a contradiction when we take $R\rightarrow +\infty$.

\vspace{0.1cm}
\textit{Case 2:} $x_0\in\partial\Omega$.
By passing to new coordinates, that come from the smoothness property of the domain $\partial\Omega\in C^{1,1}$, we can suppose that $\partial\Omega\subset \{x_n=0\}$ and $\Omega\subset\{ x_n>0\}$. 

Set $\rho_k=\mathrm{dist}(x_0^k,\partial\Omega)=x_0^k\cdot e_n=x_{0,n}^k\,$, where $e_n=(0,\ldots,0,1)$, $x_{0}^k=(x_{0,1}^k,\ldots,x_{0,n}^k)$.
Analogously, consider $\psi_k (y)$ in $y\in B_{\rho_k/{r_k}}(0)$ and the respective equation $\widetilde{G}_k$ as in case 1.
Thus, we have for $x,y$ satisfying $r_k y=x-x_0^k\,$, that the set $\{x_n>0\}$ is equivalent to $A_k:=\{y_n=(x-x_0^k)\cdot e_n / {r_k}>-\rho_k/{r_k}\}$. Now we need to analyze the behavior of the set $A_k$ when we take the limit as $k\rightarrow +\infty$.

We first claim that $\rho_k/{r_k}$ is bounded below by a constant $C_1>0$, which means that $A_k$ does not converge to $\{y_n>0\}$.
This is an easy consequence of our $C^{1,\alpha}$ boundary regularity and estimates in a half ball, applied to $\psi_k$ and $\widetilde{G}_k$.
Indeed, since  $\|D \psi_k\|_{L^\infty (B_r^+(0))}\leq C$, then $1=|\varphi (x_0^k)-\varphi(\bar{x}_0^k)|=|\psi_k(0,0)-\psi_k(0,-\rho_k/{r_k})|\leq C \rho_k/{r_k}$, with $\bar{x}_0^k=(x_{0,1}^k,\ldots,x_{0,n-1}^k,0)\in \partial\Omega$ and fixed $r>0$, from where we obtain the desired bound.

Next observe that we have two possibilities about the fraction $\rho_k/ {r_k}$, either it converges to $+\infty$ or it is uniformly bounded. In the first one, $A_k\rightarrow\rn$ and we finish as in case 1. In the second, $A_k\rightarrow \{y_n> \varrho\}$, $\varrho\in (0,+\infty)$, by passing to a subsequence, and the proof carries on as in the case 1, since we have a smooth domain that contains a ball with radius $R=(2\,C_0/c(x_0)\,)^{1/2}$; this derives the final contradiction.
\end{proof}

\begin{lem}\label{limit final}
Let $c\in L^p(\Omega)$, $c\geq \delta$ in $B_R$ for some $B_R\subset\subset\Omega$ and $F$ satisfying \eqref{HF}, then
$$\lambda_1^\pm (F(c),\Omega) \leq \frac{\lambda_1^\pm (F(1),B_R)}{\delta}. $$
\end{lem}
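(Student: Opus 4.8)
The plan is to deduce the estimate from two monotonicity properties of the quantities $\lambda_1^\pm$, neither of which needs any regularity or compactness input. First I would record the domain monotonicity $\lambda_1^\pm(F(c),\Omega)\le\lambda_1^\pm(F(c),B_R)$: since $B_R\subset\subset\Omega$ and the $L^n$-viscosity sub/supersolution property is a purely local notion, the restriction to $\overline{B_R}$ of any competitor $\psi\in\Psi^\pm(F(c),\Omega,\lambda)$ lies in $\Psi^\pm(F(c),B_R,\lambda)$; hence $\Psi^\pm(F(c),\Omega,\lambda)\neq\emptyset$ implies $\Psi^\pm(F(c),B_R,\lambda)\neq\emptyset$, and taking the supremum over admissible $\lambda$ gives the inequality.

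The second and main step is the weight comparison $\lambda_1^\pm(F(c),B_R)\le\lambda_1^\pm(F(1),B_R)/\delta$. Treating the minus sign (the plus sign being identical with the signs reversed), suppose $\lambda>0$ is such that $\Psi^-(F(c),B_R,\lambda)\neq\emptyset$, witnessed by some $\psi\in C(\overline{B_R})$ with $\psi<0$ in $B_R$ which is an $L^n$-viscosity subsolution of $F[\psi]+\lambda c(x)\psi=0$ in $B_R$. Since $\psi<0$ in $B_R$ and $c\ge\delta$ a.e.\ in $B_R$, we have $\lambda c(x)\psi(x)\le\lambda\delta\psi(x)$ a.e.\ in $B_R$; I would feed this pointwise inequality into Definition~\ref{def Lp-viscosity sol} to conclude that $\psi$ is also an $L^n$-viscosity subsolution of $F[\psi]+\lambda\delta\psi=0$ in $B_R$. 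Concretely, if $\phi\in W^{2,p}_{\mathrm{loc}}$, $\varepsilon>0$ and an open $\mathcal{O}\subset B_R$ are such that $F(x,\psi(x),D\phi(x),D^2\phi(x))+\lambda\delta\psi(x)\le-\varepsilon$ a.e.\ in $\mathcal{O}$, then also $F(x,\psi(x),D\phi(x),D^2\phi(x))+\lambda c(x)\psi(x)\le-\varepsilon$ a.e.\ in $\mathcal{O}$, so $\psi-\phi$ cannot have an interior local maximum in $\mathcal{O}$. Thus $\psi\in\Psi^-(F(1),B_R,\lambda\delta)$, whence $\lambda\delta\le\lambda_1^-(F(1),B_R)$; taking the supremum over all such $\lambda$ yields $\lambda_1^-(F(c),B_R)\le\lambda_1^-(F(1),B_R)/\delta$. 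Chaining the two steps gives the statement.

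The only delicate point — and the place where the strict sign of $\psi$ and the lower bound on $c$ are genuinely used — is the transfer of the $L^n$-viscosity inequality from the weight $c$ to the constant weight $\delta$: replacing $\lambda c(x)\psi$ by $\lambda\delta\psi$ only \emph{relaxes} the one-sided inequality appearing in the subsolution condition, so no structure beyond \eqref{SC} and no twice-differentiability of $\psi$ is needed. No use of ABP, the strong maximum principle, or the $C^{1,\alpha}$/$W^{2,p}$ machinery enters this lemma; it is pure bookkeeping with the definitions of $\Psi^\pm$ and $\lambda_1^\pm$. (If $\lambda_1^\pm(F(1),B_R)=+\infty$ the bound is vacuous, and otherwise all the suprema above are finite, e.g.\ by Lemma~\ref{boundedness eig QB}.)
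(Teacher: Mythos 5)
Your proof is correct and takes essentially the same route as the paper: both reduce the statement to a containment of the admissible $\lambda$-sets, using the locality of the $L^n$-viscosity property (to pass from $\Omega$ to $B_R$) and the pointwise inequality $\lambda c\psi\le\lambda\delta\psi$ for $\psi<0$ (resp. $\ge$ for $\psi>0$) to relax the competitor's differential inequality. The only cosmetic difference is that you split the containment into two monotonicity steps (domain, then weight) while the paper carries both out simultaneously, and you spell out $\lambda_1^-$ where the paper spells out $\lambda_1^+$.
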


\begin{proof}
Let us prove the $\lambda^+_1$ case; for $\lambda_1^-$ we use $G$ instead of $F$. We already know that both quantities are nonnegative, by the properness of the operator $F$. Hence, it is enough to verify that $\mathcal{A}\cap \{\lambda\geq 0\} \subset\mathcal{B}/\delta \cap \{\lambda\geq 0\}$, where
$$
\lambda_1^+(F(c),\Omega)=\sup_\mathcal{A} \lambda = \sup_{\mathcal{A}\cap \{\lambda\geq 0\}} \lambda\; ,\;\;  \lambda_1^+(F(1),B_R)=\sup_\mathcal{B} \lambda = \sup_{\mathcal{B}\cap \{\lambda\geq 0\}} \lambda
$$
as defined before. Let $\lambda\in \mathcal{A}\cap \{\lambda\geq 0\}$, then there exists $\psi\in C(\overline{\Omega})$ a nonnegative $L^n$-viscosity solution of $F[\psi]+c(x)\lambda\psi \leq 0$ in $\Omega$. Then, $\psi$ is also a nonnegative $L^n$-viscosity solution of $F[\psi]+\delta\lambda\psi \leq 0$ in $B_R$\,, from where $\delta\lambda\in \mathcal{B}$.
\end{proof}

\begin{proof}[\textit{Proof of Theorem \ref{exist eig for F c geq 0}.}]
First, from the fact that $c>0$ in a set of positive measure,  there exists $\delta>0$ such that $\{c\geq \delta\}$ is a nontrivial set. In fact, if this was not true, i.e. if $|\{c\geq\delta\}|=0$ for all $\delta$, then $\{c>0\}=\bigcup_{\delta>0} \{c\geq\delta\}$ would have measure zero, as the union of such sets, contradicting the hypothesis.
Namely, then, $c\geq \delta>0$ a.e. in some ball $B_R\subset\subset\Omega$.

Let us prove the $\lambda_1^-$ case, applying Krein-Rutman results to $G$; for $\lambda_1^+$ replace $G$ by $F$.

Let $\varepsilon\in (0,1)$ and define $c_\varepsilon:=c+\varepsilon >0$ in $\Omega$, for all $\varepsilon$.
From theorem \ref{KRquaas}, we obtain the existence of pairs $\alpha_1^\varepsilon>0$ and $\varphi_1^\varepsilon\in C^{1}(\overline{\Omega})$ such that
\begin{align} \label{eq exist eigen G c epsilon}
\left\{
\begin{array}{rclcc}
G[\varphi_1^\varepsilon]+\alpha_1^\varepsilon \,c_\varepsilon(x) \varphi_1^\varepsilon &= &0  &\mbox{in} & \Omega \\
\varphi_1^\varepsilon &> &0  &\mbox{in} &\Omega \\
\varphi_1^\varepsilon &=& 0 &\mbox{on} &\partial\Omega
\end{array}
\right.
\end{align}
with $\max_{\overline{\Omega}}\,\varphi_1^\varepsilon=1$ for all $\varepsilon\in (0,1)$. Then,
\begin{align}\label{cota lambda1 b,d limit}
0<\alpha_1^\varepsilon\leq \lambda_1^+(G(c_\varepsilon),\Omega)=\lambda_1^-(F(c_\varepsilon),\Omega)\leq \frac{C_0}{\delta R^2}\,\;\textrm{ for all }\varepsilon\in (0,1).
\end{align}

Next, $\alpha_1^\varepsilon\rightarrow\alpha_1 \in [0,C_0/{\delta R^2}]$ up to a subsequence.
Then, applying $C^{1,\alpha}$ global regularity and estimates (theorem \ref{C1,alpha regularity estimates geral}) in the case $\mu=0$ (recall again that $L^n$-viscosity solutions are $L^p$-viscosity for $p>n$), by considering $\alpha_1^\varepsilon \,c_\varepsilon (x)\,\varphi_1^\varepsilon \in L^p (\Omega)$ as the right hand side, we obtain
\begin{align*}
\|\varphi_1^\varepsilon\|_{C^{1,\alpha}(\overline{\Omega})} &\leq C\,\{\,\|\varphi_1^\varepsilon\|_{L^\infty(\Omega)} +\alpha_1^\varepsilon\, \|c_\varepsilon\|_{L^p(\Omega)} \,\|\varphi_1^\varepsilon \|_\infty +1\,\}
\leq C\, C_1 \,(\|c\|_{L^p(\Omega)} +1) \,\} \leq C.
\end{align*}
Hence the compact inclusion $C^{1,\alpha}(\overline{\Omega})\subset C^1(\overline{\Omega})$ yields $\varphi_1^\varepsilon \rightarrow \varphi_1\in C^1(\overline{\Omega})$, up to a subsequence. Of course this implies that $\max_{\overline{\Omega}}\,\varphi_1=1$, $\varphi_1\geq 0$ in $\Omega$ and $\varphi_1=0$ on $\partial\Omega$.

Since $c_\varepsilon\rightarrow c$ in $L^p(\Omega)$ as $\varepsilon\rightarrow 0$, by proposition \ref{Lpquad} we have that $\varphi_1$ is an $L^p$-viscosity solution of $G[\varphi_1]+\alpha_1 c(x)\varphi_1=0$ in $\Omega$, which allows us to apply $C^{1,\alpha}$ regularity again to obtain that $\varphi_1\in C^{1,\alpha}(\overline{\Omega})$.

Using now that $\varphi_1$ is an $L^p$-viscosity solution of
$\mathcal{L}^-[\varphi_1]-(d(x)\omega (1)-\alpha_1 c(x))\,\varphi_1 \leq 0$ in $ \Omega$, together with SMP, we have that $\varphi_1>0$ in $\Omega$, since $\max_{\overline{\Omega}}\,\varphi_1=1$.
Moreover, we must have $\alpha_1>0$, because the case $\alpha_1=0$ would imply that $\varphi_1$ is an $L ^p$-viscosity solution of $\mathcal{L}^+[\varphi_1] \geq 0$ in $\Omega\cap \{\varphi_1>0\}$ (since $F$ is proper, and so $G$) which, in turn, would give us $\varphi_1\leq 0$ in $\Omega$, by ABP. Thus, the existence property is completed.

In order to conclude that, under $W^{2,p}$ regularity assumptions over $F$, the $\alpha_1$ obtained is equal to $\lambda_1^-=\lambda_1^-(F(c),\Omega)$, related to $\varphi_1^-=\varphi_1^-(F(c),\Omega)=-\varphi_1<0$ in $\Omega$, we have to work a little bit more, as in proposition 4.7 in \cite{QB}.

We already have that $\alpha_1\leq\lambda_1^-$. Suppose by contradiction that $\alpha_1<\lambda_1^-$.
By definition of $\lambda_1^-$ as a supremum, we know that $\alpha_1$ cannot be an upper bound, i.e. there exists $\lambda>0$ such that $\Psi^-(F(c),\Omega,\lambda)\neq\emptyset$ and $\alpha_1<\lambda\leq\lambda_1^-$. Then we obtain $\psi\in C(\overline{\Omega})$ such that
$F[\psi]+\lambda c(x)\psi\geq 0$ in $\Omega$
in the $L^n$-viscosity sense, with $\psi<0$ in $\Omega$.
Now, since $c\gneqq 0$, we have $c(x)(\lambda-\alpha_1)\gneqq 0$. Next, $\psi$ is an $L^n$-viscosity solution of
\begin{align}\label{contradction exist eig}
F[\psi]+\alpha_1 \,c(x)\psi\gneqq F[\psi]+\lambda\, c(x)\psi\geq 0\;\textrm{ in }\Omega\, .
\end{align}
Then, under $W^{2,p}$ regularity, we have that $\varphi_1^-\in W^{2,p}(\Omega)\subset W^{2,n}(\Omega)$ is a strong solution of
\begin{align*}
\left\{
\begin{array}{rclcc}
F[\varphi_1^-]+\alpha_1 \,c(x) \varphi_1^- &=& 0 &\mbox{in} & \Omega \\
\varphi_1^- &<& 0 &\mbox{in} &\Omega \\
\varphi_1^- &=& 0 &\mbox{on} &\partial\Omega\, .
\end{array}
\right.
\end{align*}
Applying proposition \ref{th4.1 QB} we obtain that $\psi=t\varphi^-_1$ for some $t>0$; but this contradicts the strict inequality in \eqref{contradction exist eig}. Thus, we must have $\alpha_1=\lambda_1^-$. The case of $\lambda_1^+$ is completely analogous, by reversing the inequalities.

From this last paragraph, under $W^{2,p}$ regularity of the solutions, the only possibility to $\alpha_1$ is to coincide with $\lambda_1$.
Therefore, by using lemmas \ref{limit unbounded b cont c} (with $c\equiv 1$) and \ref{limit final}, we obtain that
$\lambda_1^-(F(c_\varepsilon),\Omega)\leq C_1/\delta$, for all $\varepsilon\in (0,1)$,
where $C_1$ depends on $n,\lambda,\Lambda,R$, $\|b\|_{L^p(\Omega)}$ and $\omega (1)\|d\|_{L^\infty(\Omega)}$.
Thus, we carry on this bound on $\lambda_1$, instead of \eqref{cota lambda1 b,d limit}, in the limiting procedure, in order to get the desired existence result for $b\in L^p(\Omega)$.
\qedhere{\textit{Theorem \ref{exist eig for F c geq 0}.}}
\end{proof}

\bibliography{bibtex}
\bibliographystyle{abbrv}

\end{document}